\documentclass[preprint,11pt]{article}
\usepackage{amsmath,amssymb}
\usepackage{mathrsfs}
\usepackage{subfig}
\usepackage{graphicx}
\usepackage{algorithm}
\usepackage{algpseudocode}
\usepackage{color}
\usepackage{textcomp}
\usepackage[table]{xcolor} 
\usepackage{tabularx}
\usepackage{upgreek}
\usepackage{fullpage}
\usepackage{url}



\begin{document}

\newtheorem{theorem}{Theorem}[section]
\newtheorem{lemma}[theorem]{Lemma}
\newtheorem{proposition}[theorem]{Proposition}
\newtheorem{corollary}[theorem]{Corollary}

\newenvironment{proof}[1][Proof]{\begin{trivlist}
\item[\hskip \labelsep {\bfseries #1}]}{\end{trivlist}}
\newenvironment{definition}[1][Definition]{\begin{trivlist}
\item[\hskip \labelsep {\bfseries #1}]}{\end{trivlist}}
\newenvironment{example}[1][Example]{\begin{trivlist}
\item[\hskip \labelsep {\bfseries #1}]}{\end{trivlist}}
\newenvironment{remark}[1][Remark]{\begin{trivlist}
\item[\hskip \labelsep {\bfseries #1}]}{\end{trivlist}}

\newcommand{\qed}{$\square$}
\newcommand{\comment}[1]{\textcolor{red}{#1}}
\newcommand{\wt}[1]{\widetilde{#1}}
\newcommand{\jump}[1]{\text{\textlbrackdbl}#1\text{\textrbrackdbl}}
\makeatletter
\newcommand*{\rom}[1]{\expandafter\@slowromancap\romannumeral #1@}
\newcommand{\perf}{\widetilde{\Omega}}
\newcommand{\solid}{{\cal S}}
\newcommand{\restrict}{{\cal R}}
\newcommand{\extend}{{\cal E}}
\newcommand{\Hspace}{H^1_{0}(\Omega)}
\newcommand{\Hperf}{H^1_{D}(\perf)}
\newcommand{\Qint}{{\cal I}_{H}}
\newcommand{\Qintp}{\widetilde{{\cal I}}_{H}}
\newcommand{\norm}[1]{\left\lVert#1\right\rVert}
\newcommand\abs[1]{\left|#1\right|}
\newcommand{\eps}{\varepsilon}
\newcommand{\de}{\delta}
\newcommand{\local}{C_{{\cal I}_{H}}}
\newcommand{\localprime}{{C'}_{{\cal I}_{H}}}

\newcommand{\avrg}[2]{{\langle #1 \rangle}_{#2}}
\newcommand{\TwoNorm}[2]{{\left\lVert#1\right\rVert}_{L^2(#2)}}

\makeatother
%
\title{A Multiscale Method for   Porous Microstructures}

\author{Donald L. Brown\thanks{Institute for Numerical Simulation, University of Bonn,  brown@ins.uni-bonn.de}\and Daniel Peterseim\thanks{Institute for Numerical Simulation, University of Bonn}}




\maketitle

\begin{abstract}
In this paper we develop a multiscale method to solve  problems in complicated porous microstructures with Neumann boundary conditions. By using a  coarse-grid quasi-interpolation operator to define a fine detail space and local orthogonal decomposition, we   construct multiscale corrections to coarse-grid basis functions with microstructure. 
By truncating the corrector functions we are able to make a computationally efficient scheme. Error results and analysis are presented.
A key component of this analysis is the investigation of the Poincar\'{e} constants in perforated domains as they may contain micro-structural information. Using a constructive method originally developed for weighted Poincar\'{e} inequalities, we are able to obtain estimates on Poincar\'{e} constants with respect to scale and separation length of the pores. Finally, two numerical examples are presented to verify our estimates. 
\end{abstract}



\section{Introduction}

 Modeling and simulation of porous media has many wide ranging applications in engineering. For example, to simulate heat  or electric conductivity in complicated materials or composites a partial differential equation (PDE) in complicated  microstructures must be solved.  Direct numerical  simulation of such problems   is difficult, and, in some scenarios is intractable. The main challenge being the many scale nature of the problem and complex geometries  involved.  In these applications, where there are many scales and complex heterogeneities, numerical homogenization procedures are employed to reduce complexity yet remain accurate. In this work, we develop a multiscale method to simulate Neumann problems in domains with porous microstructures. 


The study of multiscale problems in porous or  perforated domains has a long history.  In the area of homogenization of partial differential equations,  there is a vast literature on the subject \cite{ref:Chechkin.2007,ref:Paloka&Mikelic.1996,ref:Sanchez-Palencia.1980} and references therein, to name just a few. In these problems, the fine-scale equations have microstructure, then through an averaging process of homogenization an effective PDE is derived. In these methods, the strong assumption of periodicity is usually made, and thus, only one microstructure dependent local problem is solved to compute effective properties. The coarse-grid, or homogenized problem, does not have explicit microstructure. More computationally based procedures have also been investigated. Using an approach based on the Heterogeneous Multiscale Method  \cite{Abdulle:E:Engquist:Vanden-Eijnden:2012}, an algorithm was developed in \cite{Henning_theheterogeneous} by solving for an unknown diffusion coefficient on the coarse-grid by resolving a local perforated domain problem. Then, computation on the coarse-grid equation is based in an effective non-porous domain.  Further  work, \cite{bris:hal-00841308}, developed a perforated multiscale finite element method for Dirichlet problems utilizing  Crouzeix-Raviart non-conforming finite elements. Using the MsFEM framework \cite{ref:Efendiev&Hou.2009},  a weak Crouzeix-Raviart boundary condition is used to construct the multiscale finite element basis that include the vanishing Dirchlet condition into the basis functions. 
There are also mesoscopic schemes that relax the resolution condition of standard
finite elements insofar as they allow that mesh cells are cut by the domain
boundary;
see e.g. \cite{MR752608,MR2567257,MR3200279,MR1431211,MR2496016} among many others. However,
for those schemes there are typically strong restrictions on the topology of the
intersection that rule out the case of perforation on the element level.

We will work in the multiscale framework using a local orthogonal decomposition (LOD) \cite{MP11}, which is
inspired by the variational multiscale method \cite{MR1660141,MR2300286,Malqvist:2011}.
The LOD method uses a coarse-grid quasi-interpolation operator  to decompose the space into fine-scale components to build the fine detail space. From the fine detail space we are able to build multiscale corrections to the coarse-grid functions and construct a multiscale space. These corrections  have global support, thus limiting their practical usage. However, these corrections have fast decay and can therefore be localized. This procedure has been used effectively for elliptic problems with $L^{\infty}$ coefficients \cite{Henning.Morgenstern.Peterseim:2014,HP13,MP11}, been extended to semi-linear elliptic equations \cite{HMP12}, linear and nonlinear eigenvalue problems \cite{HMP13,MP12}, and to the wave equation \cite{Henningwave}.


In this work, we extend this framework  to the case when we have microstructures that generate the multiscale features as opposed to oscillatory and highly varying coefficients. 
We first build a coarse-perforated grid, then by using a quasi-interpolation operator based on local $L^2$ projection build a fine-scale space. We again follow the process in \cite{Henning.Morgenstern.Peterseim:2014,HP13,MP11} of multiscale space construction, localization, and subsequent error estimates. We show that we can obtain the same error estimates with respect to coarse-grid size and truncation of local problems as in these works. However, in this setting we are particularly concerned with the tracking of Poincar\'{e} constants in perforated domains as these may depend on the micro-structural features, namely the size of particles and separation length. Using the methods developed in \cite{Pechstein01042013}, originally for the setting of high-contrast coefficients and weighted Poincar\'{e} inequalities, we are able to create a constructive procedure to estimate these constants in domains with microstructure. This is carried out for a few interesting examples. We show that in the case of a reticulated filamented structure it is possible that the microstructural features can negatively impact this Poincar\'{e} constant in the case of very thin structures. In addition, we show that in the case of isolated particles we obtain uniform (microstructure independent) Poincar\'{e} constants.

The paper is organized as follows. We begin by the problem setting and the description of quasi-interpolations in perforated domains. This quasi-interpolation will allow us to construct our multiscale orthogonal splitting and subsequent computational localization algorithm. Then, we will derive error estimates on both global supported and localized basis functions. This is done with the help of technical lemmas in the Appendix and careful tracking of relevant constants. We then develop a constructive procedure to estimate Poincar\'{e} constants in porous domains. Finally, we give two numerical examples to demonstrate the rates of convergence with respect to mesh parameters, localization truncation, and microstructure lengths. In addition, we discuss overall effectiveness of the algorithm and the choices of possible quasi-interpolation operators.

\section{Problem Set Up}
 We now begin with some notation and problem setting. Let $\Omega\subset \mathbb{R}^d$ be a bounded Lipschitz domain with polyhedral boundary for $d\geq 2$. We denote the solid microstructure to be  $\{S_{i}\}_{i=1}^N$, a set of Lipschitz nonintersecting closed subsets of $\Omega$.  We denote the perforated domain, often called fluid or porous domain, 
 $\perf=\Omega \backslash \solid $, where $\solid=\cup_{i=1}^N S_{i}$. We supposed that the solid microstructure or inclusions are so that $\perf$ remains connected and Lipschitz.
 We let $\eta$ be the characteristic size of the microstructure.
  \begin{figure}[!ht]
  \centering
      \includegraphics[width=0.5\textwidth]{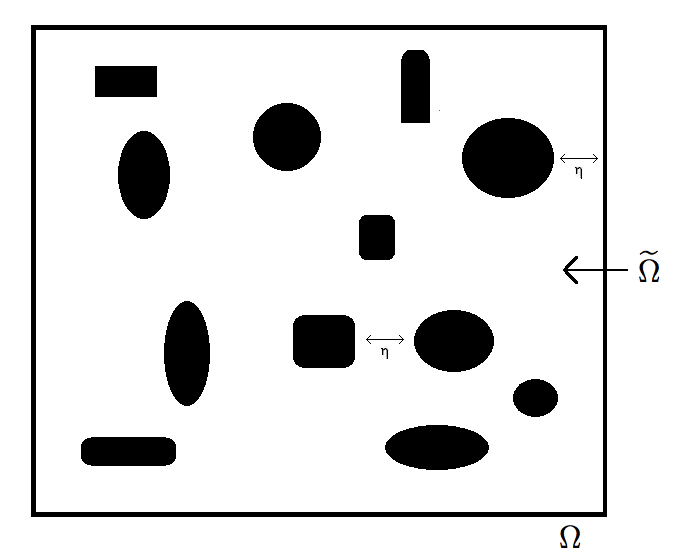}
       \caption{Domain $\Omega$ with microstructure. The porous part of the domain is denoted $\wt{\Omega}$.}
    \label{domain}
  \end{figure}
Moreover, we let $\eta$ also be the minimal separation length. These two parameters may be considered separately, but for clarity we choose them to be on the same order of magnitude. We suppose for simplicity that the perforations do not intersect the global boundary, but may be $\eta$ close to it.  An example geometry can be seen in Figure \ref{domain}.

We wish to find a solution $u$ that satisfies 
\begin{subequations}\label{perfLaplace}
\begin{align}
 -\Delta u&=g \text{ in } \perf,\\
 \label{neumann}
 \frac{\partial u}{\partial n}&=0 \text{ on } \partial \solid ,\\
 u&=0 \text{ on } \partial \Omega.
\end{align}
\end{subequations}
Where $g\in L^2(\wt{\Omega})$, and $n$ denotes the outer normal on $\partial \solid.$

We denote the space $H^1_{D}(\perf):=\{v\in H^1(\perf)\;|\;v=0 \text{ on } \partial \perf\}$. Multiplying by $v\in \Hperf$ and  integrating \eqref{perfLaplace}, we wish to solve for $u\in \Hperf$ such that  
\begin{align}\label{varform}
 \int_{\perf}\nabla {u} \nabla v dz=\int_{\perf} g v dz,
\end{align}
here $dz$ is the standard real Lebesgue measure in $\mathbb{R}^d$. The main difficulty in solving the above problem is the mutliscale nature introduced from the microstructure. We may also add in an oscillatory coefficient inside the perforated domain $\perf$, however, this case is well studied in \cite{MP11} and we focus on the issues involved with the multiscale geometries.

\section{Quasi-Interpolation in Perforated Domains}
In this section we develop the framework to work on perforated domains. We first define  the classical nodal basis  restricted  to $\perf$ the perforated domain. Then, we describe how to construct a quasi-interpolation that is also projective, in contrast to the quasi-interpolation operator used in  \cite{MP11}.

\subsection{Classical Nodal Basis}
Following much of the notation in \cite{MP11}, suppose that we have a coarse quasi-uniform  discretization ${\cal T}_{H}$ of the unperforated domain $\Omega$ with mesh size $H$. 
We denote the interior nodes not on the boundary of the coarse mesh as ${\cal N}_{H}$.
Let the classical conforming $\mathbb{P}_{1}$ finite element space over ${\cal T}_{H}$ be given by $S_{H}$, 
and let $V_{H}=S_{H}\cap H^1_{0}(\Omega)$. We denote the nodal basis functions $\lambda_y$, that is for an interior node $y\in{\cal N}_{H}$, we have 
\begin{align}\label{P1}
 \lambda_{x}(x)=1 \text{ and } \lambda_{y}(x)=0, y\neq x.
\end{align}
This is a basis for  $V_{H}$. 
Let ${u}_{H}\in {V}_H$ be the function satisfying
\begin{align*}
 \int_{\Omega}\nabla {u}_{H} \nabla v dz=\int_{\Omega} g v dz, \text{ for all } v\in {V}_H .
\end{align*}

To move to the perforated domain it is useful to have some more notation. 
We denote the restriction operator of a function on $\Omega$ to $\perf$ by $\restrict:H^1_{0}(\Omega) \to H^1_{0}(\perf)$. We denote the space of finite   element functions \eqref{P1} restricted to the perforated domain as
$$\widetilde{V}_H=\{ w | \text{ there exists } u\in V_{H}, w=\restrict u\}={\cal R}\wt{V}_{H}.$$ From here we may define a coarse-grid variational form of \eqref{varform}. Indeed, let $\tilde{u}_{H}\in \widetilde{V}_H$ be the function satisfying
\begin{align}\label{varformcoarsegrid}
 \int_{\perf}\nabla \tilde{u}_{H} \nabla v dz=\int_{\perf} g v dz, \text{ for all } v\in \widetilde{V}_H.
\end{align}
However,  $\tilde{u}_{H}$ will not be a good approximation to $\tilde{u}$ unless $H$ is sufficiently small to resolve the microstructure. 
%

\subsection{Projective Quasi-Interpolation}
In this section, we develop the theory for a quasi-interpolation operator that is also a projection.  This projective quasi-interpolation gives stability properties required for the localization theory without the use of an auxiliary  "closeness to projection" lemma used in the theory of Cl\'{e}ment quasi-interpolation theory c.f.\ Lemma 1 of \cite{Henning.Morgenstern.Peterseim:2014}.  This requires the construction of a function that satisfies certain interpolation properties and  derivative bounds. However, in the case of perforated domains such a construction can be quite tedious and an alternate approach is utilized here. 

We will construct a quasi-interpolation operator that is also projective and satisfies the requisite local stability properties. 
For non-perforated domains, this is a well known modification of the operator of Cl\'{e}ment \cite{OhlbergerNotes}.
We denote the local patch $\text{supp(}\lambda_x)=\omega_{x}$ for $x\in{\cal N}_{H}$ and, subsequently, the perforated patch as $\wt{\omega}_x=\omega_{x}\cap \perf$.
First, we define the local patch $L^2$ projection  ${\cal P}_{x}: L^2(\wt{\omega}_{x})\to \wt{V}_{H}(\wt{\omega}_{x}),$ as the operator such that for $u\in H^1_{D}(\wt{\omega}_{x})$ 
\begin{align}\label{L2proj}
\int_{\wt{\omega}_{x}}( {\cal P}_{x}u) v_{H} dz= \int_{\wt{\omega}_{x}}u v_{H} dz \text{ for all } v_{H}\in  \wt{V}_{H}(\wt{\omega}_{x}).
\end{align}
From this we define the interpolation operator $\Qintp: \Hperf \to \wt{V}_{H}$ for $u\in \Hperf$ as
\begin{align}\label{proj}
\Qintp u=\sum_{x\in{\cal N}_{H}} ({\cal P}_{x} u)(x){\cal R}  \lambda_{x}.
\end{align}

Given a function $\phi\in H^1_{D}(\perf)$ with support contained in a patch of triangles  $\wt{\omega}_{x}$, then, by the definition of the quasi-interpolation \eqref{proj}, it is clear that $\text{supp}(\Qintp(\phi))\not \subset\wt{\omega}_{x}$  in general, as the boundary nodes on the patch $\wt{\omega}_{x}$ will add a contribution smearing out the function. To deal with this issue we require some notation and definitions.
Using the definition and notation in \cite{Henning.Morgenstern.Peterseim:2014}, we define for any patch $\wt{\omega}_{x}$
 the extension patch 
\begin{subequations}\label{patches}
\begin{align}
 \wt{\omega}_{x}&=\wt{\omega}_{x,0}=\text{supp}( \lambda_{x}) \cap \perf,\\
\wt{\omega}_{x,k}&=\text{int}(\cup \{ T\in T_{H} | T\cap \wt{\omega}_{k-1}\neq \emptyset \}\cap \perf,
\end{align}
\end{subequations}
for $k=1,2,3,4,\dots$.
With this notation we have $\text{supp}(\Qintp(\phi)) \subset \overline{\wt{\omega}}_{x,1}$ if $\text{supp}(\phi) \subset\wt{\omega}_{x,0}$ for the interpolator \eqref{proj}.

%
%
%
We have the following stability and local approximation of the quasi-interpolation operator $\Qintp$ defined by \eqref{proj}, along with the desired projective properties.

\begin{lemma}
  There exists a constant $\local>0$, for all $u\in \Hperf$, such that 
   \begin{align}\label{stableproj}
 &H^{-1}\norm{u-\Qintp u}_{L^2(\wt{\omega}_{x,0} ) }+\norm{\nabla (u-\Qintp u)}_{L^2(\wt{\omega}_{x,0})} \leq  \local  \norm{\nabla  u}_{L^2(\wt{\omega}_{x,1} )},
\end{align}
where $\local=C C_{P}$.  Here $C_{P}$ is the Poincar\'{e} constant in perforated domains. Here, $C$ is a benign  constant not depending on $H$ or $\eta$. Moreover, the interpolation $\Qintp$ is a projection. 
\end{lemma}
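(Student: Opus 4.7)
The plan is to separate the projection property (an algebraic identity) from the quantitative stability and approximation estimates (standard Clément-type estimates adapted to the perforated patch geometry), and to isolate the sole $\eta$-dependence into the Poincaré constant on $\wt{\omega}_{x,1}$.

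For the projection property, I would observe that for any $v_{H}\in\wt{V}_{H}$ and any interior node $x$, the restriction of $v_{H}$ to the patch $\wt{\omega}_{x}$ lies in $\wt{V}_{H}(\wt{\omega}_{x})$. Since $\mathcal{P}_{x}$ is the $L^{2}$ projection onto this local space, it fixes its range, so $\mathcal{P}_{x}v_{H}=v_{H}|_{\wt{\omega}_{x}}$ and therefore $(\mathcal{P}_{x}v_{H})(x)=v_{H}(x)$. Using that $v_{H}$ vanishes at boundary nodes (since $v_{H}\in H^{1}_{D}$), the nodal expansion $\sum_{x\in\mathcal{N}_{H}} v_{H}(x)\,\mathcal{R}\lambda_{x}$ reconstructs $v_{H}$, hence $\Qintp v_{H}=v_{H}$.

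For the stability and approximation bound, the plan is to fix a patch $\wt{\omega}_{x,0}$ and pick the constant $c:=\avrg{u}{\wt{\omega}_{x,1}}$, the $L^{2}$-average over the extended patch. Since constants belong to $\wt{V}_{H}(\wt{\omega}_{y})$ for every node $y$ whose patch meets $\wt{\omega}_{x,0}$, the projection preserves them, so $(\mathcal{P}_{y}u)(y)-c=(\mathcal{P}_{y}(u-c))(y)$. I would then chain three standard ingredients on each such patch: (i) $L^{2}$-stability of $\mathcal{P}_{y}$, giving $\norm{\mathcal{P}_{y}(u-c)}_{L^{2}(\wt{\omega}_{y})}\le\norm{u-c}_{L^{2}(\wt{\omega}_{y})}$; (ii) an inverse/discrete-maximum inequality on the finite-dimensional space $\wt{V}_{H}(\wt{\omega}_{y})$ of the form $|w(y)|\le C H^{-d/2}\norm{w}_{L^{2}(\wt{\omega}_{y})}$, which holds with a shape-regularity constant independent of $\eta$ because the local space consists of restrictions of standard $\mathbb{P}_{1}$ functions; (iii) a partition-of-unity bookkeeping argument to pass from the per-node bounds to $\norm{u-\Qintp u}_{L^{2}(T)}$ on each coarse element $T\subset\wt{\omega}_{x,0}$, using the $H^{1}_{0}$ boundary condition of $u$ to close up incomplete partitions near $\partial\Omega$. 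Summing over the finitely many $T\subset\wt{\omega}_{x,0}$ and overlapping node patches $\wt{\omega}_{y}\subset\wt{\omega}_{x,1}$, I finally apply the perforated-domain Poincaré inequality $\norm{u-c}_{L^{2}(\wt{\omega}_{x,1})}\le C_{P}H\norm{\nabla u}_{L^{2}(\wt{\omega}_{x,1})}$ to bound everything by the right-hand side. For the gradient term I repeat the same decomposition after multiplying by $\norm{\nabla \mathcal{R}\lambda_{y}}_{L^{\infty}}\le C H^{-1}$, which exactly compensates the missing factor of $H^{-1}$.

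The part I expect to require the most care is arguing that all the constants introduced in steps (ii) and (iii) can be taken independent of $\eta$ and bounded uniformly in the shape regularity of $\mathcal{T}_{H}$. The reason I believe this works is that $\wt{V}_{H}(\wt{\omega}_{y})$ is obtained by restriction of functions defined on the unperforated patch, so the inverse estimate and the boundedness of $\mathcal{P}_{y}$ reduce to statements on the background mesh, insensitive to the microstructure. The only place a microstructure-dependent constant is unavoidable is in the Poincaré step, which justifies writing $\local=C\,C_{P}$ with $C$ benign and $C_{P}$ the perforated-domain Poincaré constant whose dependence on $\eta$ is analysed separately later in the paper.
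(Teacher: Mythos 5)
Your proposal is correct and follows the same overall Cl\'{e}ment-type strategy as the paper (partition of unity, local $L^2$ projections, Poincar\'{e} on the enlarged patch, projection property via the fact that $\mathcal{P}_x$ fixes its range), but the middle step is organized differently in a way worth noting. The paper expands $u-\Qintp u=\sum_x\bigl(u-(\mathcal{P}_x u)(x)\bigr)\lambda_x$ and splits each summand into $u-\mathcal{P}_x u$ (controlled by $L^2$-stability of $\mathcal{P}_x$ plus Poincar\'{e}) and the oscillation $\mathcal{P}_x u-(\mathcal{P}_x u)(x)$ (controlled by a Lipschitz bound on the discrete function, which in turn requires the $H^1$-stability of the local $L^2$ projection, estimate \eqref{localstabder}). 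You instead subtract a single reference constant $c=\avrg{u}{\wt{\omega}_{x,1}}$, use that $\mathcal{P}_y$ preserves constants, and control the nodal values $(\mathcal{P}_y(u-c))(y)$ by an inverse (discrete sup) estimate $|w(y)|\le CH^{-d/2}\norm{w}_{L^2(\wt{\omega}_y)}$; this entirely avoids the $H^1$-stability of $\mathcal{P}_y$ and yields the gradient bound by the same bookkeeping with $\norm{\nabla\lambda_y}_{L^\infty}\le CH^{-1}$. That is a legitimate simplification. The one place where your justification is too quick is the claim that the inverse-estimate constant is $\eta$-independent ``because the local space consists of restrictions of standard $\mathbb{P}_1$ functions'': the $L^2$ norm on the right is taken over the \emph{perforated} patch $\wt{\omega}_y$, so the constant degrades if the perforation removes almost all of the elements adjacent to $y$; what is really needed is a norm equivalence $\norm{w}_{L^2(\wt{\omega}_y)}\gtrsim\norm{w}_{L^2(\omega_y)}$ for $w\in\mathbb{P}_1$, i.e.\ a lower bound on $|\wt{\omega}_y\cap T|/|T|$ or on the perforated local mass matrix. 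This is not a defect relative to the paper --- the paper's own steps $|\wt{\omega}_{x,1}|^{1/2}\norm{\nabla\mathcal{P}_x u}_{L^\infty(\wt{\omega}_x)}\le C\norm{\nabla\mathcal{P}_x u}_{L^2(\wt{\omega}_x)}$ and the inverse inequality behind \eqref{localstabder} hide exactly the same geometric assumption --- but you should state it explicitly rather than attribute the uniformity to the background mesh alone.
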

\begin{proof}
See  Appendix \ref{quasiproof}.
\end{proof}

It is important to note  that we have a Poincar\'{e} constant in the above estimate. Since our domain can have complicated microstructure we must be careful when analyzing estimates that contain this constant. We suppose that we have the following general Poincar\'{e} inequality for each patch $\wt{\omega}_{x}$ for $x\in {\cal N}_{H}$. Moreover, we shall suppose that this constant serves as a global bound with respect to $H$ and $\eta$. The analysis of such a constant will be considered in Section \ref{poinsection}.

For all $\wt{\omega}_{x}$ with $x\in {\cal N}_{H}$, we have for $\phi\in H^1(\perf)$
\begin{align}\label{poincarelocal}
 \norm{\phi-\bar{\phi}}_{L^2(\wt{\omega}_{x})}\leq H C_P^{x}(\eta,H) \norm{\nabla \phi}_{L^2(\wt{\omega}_{x})},
\end{align}
Where $C^x_P(\eta,H)$ may depend on the diameter of the triangulation, and subsequently ${\omega}_{x}$, and  its characteristic microstructure parameter $\eta$. We denote $C_{P}(\eta,H)=\max_{x\in {\cal N}_{H}}C_{P}^x(\eta,H)$ and will drop the notation $(\eta,H)$ in many of the auxiliary  estimates in the  Appendix   \ref{auxsection} and throughout the paper when there is no ambiguity.

\section{Multiscale Splitting and Basis}
We now will construct our multiscale approximation space to handle the oscillations created by the perforated microstructure. The main ideas of this splitting can be found in \cite{Henning.Morgenstern.Peterseim:2014,MP11} and references therein. As noted before the coarse mesh space restricted to $\perf$ can not resolve the features of the microstructure and these fine-scale features must be captured in the multiscale basis. We begin by constructing fine-scale spaces.

We define the kernel of the perforated interpolation operator to be 
\begin{align*}
 \widetilde{V}^f=\{v\in \Hperf \;| \; \Qintp v=0\},
\end{align*}
where $\Qintp$ is defined by \eqref{proj}. This space will represent the small scale features not captured by $\wt{V}_H$. We define the fine-scale projection $Q_{\perf}  :\wt{V}_H\to \wt{V}^f$ to be the operator such that for $v\in \wt{V}_H$ we compute $Q_{\perf} (v)\in \wt{V}^f $ as
\begin{align}\label{corrector}
 \int_{\perf}\nabla Q_{\perf} (v) \nabla w dz=\int_{\perf}\nabla v \nabla w dz, \text{ for all } w\in \wt{V}^f.
\end{align}
This projection gives an orthogonal splitting $\Hperf=\wt{V}^{ms}_{H}\oplus \wt{V}^f$ with $\wt{V}^{ms}_{H}=(\wt{V}_{H}-Q_{\perf} ( \wt{V}_{H})).$ We can decompose any $u\in \Hperf$ as $u=u^{ms}+u^f$ with $\int_{\perf} \nabla u^{ms} \nabla u^f dz=0$.
This modified coarse space is referred to as the multiscale space and contains fine-scale geometric information. The multiscale Galerkin approximation  $u_{H}^{ms}\in \wt{V}_{H}^{ms}$ satisfies 
\begin{align}\label{msvarform}
 \int_{\perf}\nabla u_{H}^{ms} \nabla v dz=\int_{\perf}gv dz \text{ for all } w\in \wt{V}^{ms}_{H} .
\end{align}

To construct the basis for the multiscale space $\wt{V}_{H}^{ms}$ we construct an adapted coarse grid basis.   We define the corrector $\phi_x=Q_{\perf} (\lambda_x )$ to be the solution to 
\begin{align}\label{varformcell}
 \int_{\perf}\nabla  \phi_x \nabla w dz =\int_{\perf} \nabla \lambda_x  \nabla w dz, \text{ for all } w\in \widetilde{V}^f.
\end{align}
We then define the perforated multiscale space $\widetilde{V}^{ms}_H$  to be the functions spanned by
\begin{align}
 \widetilde{V}^{ms}_H=\text{span}\{\restrict \lambda_x-\phi_x|x\in {\cal N}_{H} \}.
\end{align}
Note that the corrector problem \eqref{corrector} is posed on the global domain. Thus, the corrections will have  global support and as such have limited practical use. However, in the following analysis we show that the basis can be localized. 

The key issue with constructing the solution to \eqref{msvarform} is the calculation of the corrector on a global basis.  However, it can be shown that the corrector decays exponentially fast.  To this end, we define the localized fine-scale space to be the fine-scale space extended by zero outside the patch, that is $\wt{V}^f(\wt{\omega}_{x,k})=\{v\in \wt{V}^f |\text{   } v|_{\perf\backslash \wt{\omega}_{x,k}}=0\}$. 
It is convenient to introduce some notion here similar to that introduced in \cite{Henning.Morgenstern.Peterseim:2014}. We let  for some $x\in {\cal N}_{H}$ and $k\in \mathbb{N}$ the local corrector operator $Q_{x,k}: \wt{V}_{H}\to\wt{V}^f(\wt{\omega}_{x,k}),$ be defined such that given a $u_{H}\in \wt{V}_{H}$
\begin{align}\label{Qcorrector}
 \int_{\wt{\omega}_{x,k}}\nabla Q_{x,k}(u_{H}) \nabla w dz=\int_{\wt{\omega}_{x}}\hat{\lambda}_{x}\nabla u_{H} \nabla w dz, \text{ for all } w \in \wt{V}^f(\wt{\omega}_{x,k}),
\end{align}
where $\hat{\lambda}_{x}=\frac{{\lambda}_{x}}{\sum_{y}\lambda_{y}}$ is  augmented so that the collection $\{ \hat{\lambda}_{x}\}_{x\in {\cal N}_{H}}$ is a partition of unity. This is done because the Dirchlet condition makes the standard basis not a partition of unity near the boundary.
For a practical evaluation of $Q_{x,k}$,  we may precompute for any neighbor $y\in {\cal N}_{H}\cap \wt{\omega}_{x}$ of $x$ the following
\begin{align}\label{Qcorrectorbasis}
 \int_{\wt{\omega}_{x,k}}\nabla Q_{x,k}(\lambda_y) \nabla w dz=\int_{\wt{\omega}_{x}}\hat{\lambda}_{x}\nabla \lambda_y \nabla w dz, \text{ for all } w \in \wt{V}^f(\wt{\omega}_{x,k}).
\end{align}
We then write $Q_{x,k}(u_{H})=\sum_{y\in{\cal N}_{H}\cap \wt{\omega}_{x}}u_{H}(y)Q_{x,k}(\lambda_y)$ and so must only compute over small number of nearby nodes for each $x$.
Moreover, we are able to exploit local periodic structures due to the fact that a drastically reduced number of corrector problems must be computed, assuming the coarse-grid is chosen properly.

We denote the global corrector operator as $$Q_{k}(u_{H})=\sum_{x\in {\cal N}_{H}}Q_{x,k}(u_{H}).$$  With this  notation, we   write the truncated multiscale space as
 $$ \widetilde{V}^{ms}_{H,k}=\text{span}\{u_{H}-Q_{k}(u_{H}) | u_{H}\in \wt{V}_{H}  \}.$$ 
 Moreover, note also that for sufficiently large $k$, we recover the full domain and obtain the ideal corrector with functions of global support, denoted $Q_{\tilde{\Omega}}.$
%
%
The corresponding multiscale approximation to \eqref{varform} is 
\begin{align}\label{localmsvarform}
 \int_{\perf}\nabla u_{H,k}^{ms} \nabla v dz=\int_{\perf}g v dz \text{ for all } w\in \wt{V}^{ms}_{H,k}.
\end{align}
%
%
\section{Error Analysis}

In this section we present the error introduced by using  \eqref{msvarform} on the global domain to compute the solution to \eqref{varform}. Then, we show how localization effects the error when we use  \eqref{localmsvarform} on truncated domains to compute the same solution. Meanwhile, we must carefully account for the effects of the Poincar\'{e} constant from \eqref{poincarelocal} in the estimate as in certain domains this may depend on the microstructure or coarse grid diameters. 

\subsection{Error with Global Support}

\begin{theorem}\label{errorglobal}
Suppose that $u\in \Hperf$ satisfies \eqref{varform} and that $u_{H}^{ms}\in \wt{V}_{H}^{ms}$ , with correctors of global support in \eqref{varformcell}, satisfies \eqref{msvarform}. Then, we have the following error estimate
\begin{align}\label{eq:errorglobal}
\TwoNorm{\nabla u-\nabla u_{H}^{ms}}{\perf}\leq C_{ol}^{\frac{1}{2}} \local \TwoNorm{H g}{\perf}.
\end{align}
\end{theorem}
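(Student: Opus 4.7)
The plan is to use the energy-orthogonal decomposition $\Hperf = \wt{V}^{ms}_H \oplus \wt{V}^f$ to recast $u - u_H^{ms}$ as a purely fine-scale function, then test against the data $g$ via \eqref{varform} and exploit the local estimate \eqref{stableproj} to extract the $H$ factor. The first step is to show that $e := u - u_H^{ms}$ lies in $\wt{V}^f$. Decomposing $u = u^{ms}+u^f$ with $u^{ms}\in\wt{V}^{ms}_H$ and $u^f\in\wt{V}^f$, every $v\in\wt{V}^{ms}_H$ satisfies $\int_{\perf}\nabla u^{ms}\nabla v\,dz = \int_{\perf}\nabla u\nabla v\,dz = \int_{\perf}gv\,dz$ by energy orthogonality of $u^f$ to $\wt{V}^{ms}_H$ and by \eqref{varform}. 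Thus $u^{ms}$ solves \eqref{msvarform}, so by uniqueness $u_H^{ms} = u^{ms}$ and $e = u^f \in \wt{V}^f$; in particular $\Qintp e = 0$.

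Since $e$ is an admissible test function in \eqref{varform} and $u_H^{ms}\in\wt{V}^{ms}_H$ is energy-orthogonal to $e\in\wt{V}^f$, the energy identity reads
\begin{equation*}
\TwoNorm{\nabla e}{\perf}^2 = \int_{\perf}\nabla u\,\nabla e\,dz = \int_{\perf}g\,e\,dz.
\end{equation*}
To convert the right-hand side into a bound of order $H$, I would split the integral over the nodal patches $\wt{\omega}_{x,0}$ covering $\perf$, apply Cauchy--Schwarz on each patch, and invoke \eqref{stableproj} for the fine-scale function $e$: because $\Qintp e = 0$, the lemma yields $\TwoNorm{e}{\wt{\omega}_{x,0}} = \TwoNorm{e - \Qintp e}{\wt{\omega}_{x,0}} \le H\,\local\,\TwoNorm{\nabla e}{\wt{\omega}_{x,1}}$. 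A discrete Cauchy--Schwarz over $x\in{\cal N}_H$ then reassembles the local gradient norms on the enlarged patches $\wt{\omega}_{x,1}$, and the finite-overlap property of the family $\{\wt{\omega}_{x,1}\}_{x\in{\cal N}_H}$ produces a constant $C_{ol}$ with $\sum_x\TwoNorm{\nabla e}{\wt{\omega}_{x,1}}^2 \le C_{ol}\TwoNorm{\nabla e}{\perf}^2$. Chaining these estimates gives $\TwoNorm{\nabla e}{\perf}^2 \le C_{ol}^{1/2}\,\local\,\TwoNorm{Hg}{\perf}\TwoNorm{\nabla e}{\perf}$, and division by $\TwoNorm{\nabla e}{\perf}$ produces \eqref{eq:errorglobal}.

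The main obstacle is bookkeeping rather than hard analysis: the local estimate \eqref{stableproj} returns a gradient norm on the one-step enlarged patch $\wt{\omega}_{x,1}$, so one must check that $C_{ol}$ is independent of the microstructure parameter $\eta$. This is exactly where it matters that ${\cal T}_H$ is built on the unperforated domain $\Omega$ and only afterwards intersected with $\perf$: the overlap count of the patches at the first level of expansion is then dictated solely by the shape regularity of ${\cal T}_H$ and is independent of $\eta$. All remaining microstructure dependence of \eqref{eq:errorglobal} is therefore funneled into the single factor $\local = CC_P$ to be analysed in Section~\ref{poinsection}.
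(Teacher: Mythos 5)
Your proposal is correct and follows essentially the same route as the paper: identify $u-u_H^{ms}$ with the fine-scale component $u^f\in\wt{V}^f$ so that $\Qintp(u^f)=0$, use the energy identity $\TwoNorm{\nabla u^f}{\perf}^2=\int_{\perf}g\,(u^f-\Qintp u^f)\,dz$, and combine the local stability estimate \eqref{stableproj} with the finite-overlap constant $C_{ol}$ of the patches. The only cosmetic difference is that you close the argument with a discrete Cauchy--Schwarz and division by $\TwoNorm{\nabla u^f}{\perf}$, whereas the paper uses Young's inequality with $\eps=C_{ol}^{-1}$; both yield the same bound, and your explicit verification that $u_H^{ms}$ coincides with the $\wt{V}^{ms}_H$-component of $u$ is a welcome bit of added care.
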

\begin{proof}
Again we use the local stability property of $\Qintp$ the local interpolation operator in \eqref{stableproj}. From the orthogonal splitting of the spaces it is clear that $ u- u_{H}^{ms}=u^f \in \wt{V}^f$ and $\Qintp(u^f)=0$. Thus, using the stability inequality we have 
\begin{align*}
&\TwoNorm{\nabla u-\nabla u_{H}^{ms}}{\perf}^2=\TwoNorm{\nabla u^f}{\perf}^2=\int_{\perf}g\left( u^f-\Qintp(u^f)\right)dz\\
&\leq \sum_{x\in{\cal N}_{H}} \TwoNorm{g}{\perf}\local H_{} \TwoNorm{u^f}{\perf}\leq \frac{\local^2}{2 \eps }  \TwoNorm{H g}{\perf}^2+\frac{\eps}{2}\sum_{x\in{\cal N}_{H}}  \TwoNorm{\nabla u^f}{\wt{\omega}_{x}}^2.
\end{align*}
Let $C_{ol}$ be the maximal number of elements covered by a patch $\wt{\omega}_{x}$ and we suppose the mesh is so that this is uniformly bounded.  Taking $\eps=C^{-1}_{ol}$ we arrive at the estimate \eqref{eq:errorglobal}.
\qed
\end{proof}

\subsection{Error with Localization}

In this  section  we show the error due to truncation with respect to patch extensions. The key lemma needed is the following estimate, the proof can be found in  Appendix \ref{auxsection}.

\begin{lemma}
Let $u_{H}\in \wt{V}_{H}$, let $Q_{m}$ be constructed from \eqref{Qcorrector}, and $Q_{\perf}$ defined to be the "ideal" corrector without truncation, then 
\begin{align}
\TwoNorm{\nabla( Q_{\perf}(u_{H})-Q_{m}(u_{H}))  }{\perf}\leq   m^{\frac{d}{2}}C_{4} \theta^{m  }   \TwoNorm{ \nabla Q_{\perf}(u_{H})  }{\perf},
\end{align}
with  $\theta\in (0,1)$, $C_{4}=C_{3}    (1+ C_{1}^2 )^{\frac{1}{2}},$ and $C_{3}=(1+C_{1}+C\local)$.
\end{lemma}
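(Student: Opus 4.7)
The plan is to follow the classical localization argument for the LOD, adapted to the perforated setting exactly along the lines of \cite{MP11,Henning.Morgenstern.Peterseim:2014,HP13}. It splits into three stages: (i) Caccioppoli-type exponential decay of each local ideal corrector $Q_{\perf,x}(u_H)$, defined by the global analogue of \eqref{Qcorrector} posed on all of $\perf$ with test space $\wt{V}^f$ and satisfying $Q_{\perf}(u_H)=\sum_{x\in\mathcal{N}_H}Q_{\perf,x}(u_H)$; (ii) a Galerkin best-approximation step transferring this decay to the truncation error $Q_{\perf,x}(u_H)-Q_{x,m}(u_H)$; and (iii) a finite-overlap summation producing the polynomial $m^{d/2}$ prefactor together with a Pythagoras-type bound on the splitting.

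For step (i), I fix a cut-off $\chi\in W^{1,\infty}(\perf)$ that vanishes on $\wt{\omega}_{x,k-1}$, equals one on $\perf\setminus\wt{\omega}_{x,k}$, and satisfies $\|\nabla\chi\|_{L^\infty}\lesssim H^{-1}$. Because $\Qintp$ is a projection, the function $w:=\chi\,Q_{\perf,x}(u_H)-\Qintp\bigl(\chi\,Q_{\perf,x}(u_H)\bigr)$ lies in $\wt{V}^f$, and for $k$ large enough its support is disjoint from $\wt{\omega}_x$, so testing the defining equation of $Q_{\perf,x}(u_H)$ against $w$ annihilates the right-hand side. Rearranging and applying the local stability \eqref{stableproj} on the annular patches intersecting $\mathrm{supp}(\nabla\chi)$ produces a fixed-step contraction
\begin{equation*}
\TwoNorm{\nabla Q_{\perf,x}(u_H)}{\perf\setminus\wt{\omega}_{x,k}}^{2}\leq \gamma\,\TwoNorm{\nabla Q_{\perf,x}(u_H)}{\perf\setminus\wt{\omega}_{x,k-c}}^{2},
\end{equation*}
where $c$ is a fixed integer reflecting the one-layer non-locality of $\Qintp$, and $\gamma\in(0,1)$ depends only on $C_1$ and $\local$. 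Iteration yields exponential decay at some rate $\theta\in(0,1)$.

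For step (ii), I use a Céa-type argument. Both $Q_{\perf,x}(u_H)$ and $Q_{x,m}(u_H)$ are energy projections of the same local load $\hat\lambda_x\nabla u_H$, the former onto $\wt{V}^f$ and the latter onto $\wt{V}^f(\wt{\omega}_{x,m})$, so by Galerkin orthogonality the error is bounded by the best approximation of $Q_{\perf,x}(u_H)$ in $\wt{V}^f(\wt{\omega}_{x,m})$. Taking a competitor of the form $Q_{\perf,x}(u_H)-\Qintp\bigl((1-\chi_{m-1})Q_{\perf,x}(u_H)\bigr)$ with $\chi_{m-1}$ a cut-off localising to $\wt{\omega}_{x,m}$, combined with the stability and projectivity of $\Qintp$, reduces the best-approximation distance to the annular tail controlled in step (i), giving
\begin{equation*}
\TwoNorm{\nabla(Q_{\perf,x}(u_H)-Q_{x,m}(u_H))}{\perf}\leq C_{3}\,\theta^{m}\TwoNorm{\nabla Q_{\perf,x}(u_H)}{\perf},
\end{equation*}
with $C_{3}=1+C_{1}+C\local$ absorbing the stability constants and the partition-of-unity factor $\hat\lambda_x$.

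Finally I would assemble. Writing the global error as $\sum_{x\in\mathcal{N}_H}\bigl(Q_{\perf,x}(u_H)-Q_{x,m}(u_H)\bigr)$, each summand is, up to an exponentially small tail, supported in a patch of radius $O(m)$. The finite-overlap bound $\#\{y\in\mathcal{N}_H:\wt{\omega}_{y,m}\cap\wt{\omega}_{x,m}\neq\emptyset\}\lesssim m^{d}$, together with Cauchy-Schwarz and the per-node bound from step (ii), gives
\begin{equation*}
\TwoNorm{\nabla(Q_{\perf}(u_H)-Q_{m}(u_H))}{\perf}^{2}\leq C\,m^{d}\,C_{3}^{2}\,\theta^{2m}\sum_{x\in\mathcal{N}_H}\TwoNorm{\nabla Q_{\perf,x}(u_H)}{\perf}^{2}.
\end{equation*}
A Pythagoras-type estimate standard in the LOD literature, obtained by testing the ideal corrector equation with $\sum_{x}Q_{\perf,x}(u_H)$ and exploiting finite overlap of the loads, bounds the last sum by $(1+C_{1}^{2})\TwoNorm{\nabla Q_{\perf}(u_H)}{\perf}^{2}$, closing the argument with $C_{4}=C_{3}(1+C_{1}^{2})^{1/2}$. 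The main obstacle is keeping $\theta$ independent of the microstructure parameter $\eta$: both $\gamma$ and the constant $C_{3}$ inherit dependence on $\local=CC_{P}$, so the decay rate is only as good as our control of the perforated Poincar\'{e} constant $C_{P}$, which is precisely the motivation for Section~\ref{poinsection}.
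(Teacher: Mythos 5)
Your plan follows essentially the same route as the paper's proof: exponential decay of the ideal local correctors (Lemma \ref{decaylemma}), a Galerkin best-approximation step with a cutoff competitor post-processed by $\Qintp$, and a finite-overlap summation yielding the $m^{d/2}$ prefactor; the constants $C_{3}$ and $(1+C_{1}^{2})^{1/2}$ end up attached to different steps than in the paper, but the product $C_{4}=C_{3}(1+C_{1}^{2})^{1/2}$ agrees. One small repair: the competitor in your step (ii) should read $(1-\chi_{m-1})Q_{\perf,x}(u_H)-\Qintp\bigl((1-\chi_{m-1})Q_{\perf,x}(u_H)\bigr)$, since as written the leading term $Q_{\perf,x}(u_H)$ has global support and the proposed competitor would not belong to $\wt{V}^f(\wt{\omega}_{x,m})$.
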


The lemma gives the decay in the error as the  truncated corrector approaches the ideal corrector of global support. With this lemma we are able to state and prove Theorem \ref{errorlocal}.
\begin{theorem}\label{errorlocal}
Suppose that $u\in \Hperf$ satisfies \eqref{varform} and that $u_{H,m}^{ms}\in \wt{V}_{H,m}^{ms}$, with local correctors  calculated from \eqref{Qcorrector}, satisfies \eqref{localmsvarform}. Then, we have the following error estimate
\begin{align}\label{eq:errorglobal}
\TwoNorm{\nabla u-\nabla u_{H,m}^{ms}}{\perf}\leq \left(C_{ol}^{\frac{1}{2}} \local H+ m^{\frac{d}{2}}C_{5} \theta^{m  }\right) \TwoNorm{g}{\perf},
\end{align}
with $\theta \in (0,1)$ a constant depending on Poincar\'{e} constants. In addition, with respect to Poincar\'{e} constants we have $$C_{ol}^{\frac{1}{2}} \local\leq C C_{P}, \text{ and } C_{5}\leq C C^4_{P}.$$
$C$ being benign constants not depending on $H$ or $\eta$.
\end{theorem}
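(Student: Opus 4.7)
The plan is to combine Theorem \ref{errorglobal} with the localization decay lemma via a best-approximation argument. Since the bilinear form $a(u,v)=\int_{\perf}\nabla u\cdot\nabla v\,dz$ coincides with the energy inner product, the Galerkin solution $u_{H,m}^{ms}$ is the $a$-orthogonal projection of $u$ onto $\wt{V}^{ms}_{H,m}$, so its energy error equals the best approximation error. It therefore suffices to exhibit one convenient element of $\wt{V}^{ms}_{H,m}$ and bound its distance to $u$.

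The natural candidate is built from the ideal multiscale solution. I would write the $a$-orthogonal splitting $u=u^{ms}+u^f$ with $u^{ms}=u_H-Q_{\perf}(u_H)\in\wt{V}^{ms}_H$, where $u_H=\Qintp u\in\wt{V}_H$ (using that $\Qintp$ is a projection and annihilates $\wt{V}^f$). Take the candidate $v_m := u_H - Q_m(u_H)\in\wt{V}^{ms}_{H,m}$ with $Q_m$ from \eqref{Qcorrector}. Then
\begin{align*}
u-v_m = u^f + \bigl(Q_m(u_H)-Q_{\perf}(u_H)\bigr),
\end{align*}
and the triangle inequality bounds $\TwoNorm{\nabla(u-v_m)}{\perf}$ by the sum of the ideal-error norm and the localization defect norm. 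The first term contributes $C_{ol}^{1/2}\local H\TwoNorm{g}{\perf}$ by Theorem \ref{errorglobal}. The second term is bounded by $m^{d/2}C_4\theta^m\TwoNorm{\nabla Q_{\perf}(u_H)}{\perf}$ through the decay lemma just stated.

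Next I would reduce $\TwoNorm{\nabla Q_{\perf}(u_H)}{\perf}$ to the data in three layered estimates. First, since $Q_{\perf}$ is an $a$-orthogonal projection onto $\wt{V}^f$, one has $\TwoNorm{\nabla Q_{\perf}(u_H)}{\perf}\leq\TwoNorm{\nabla u_H}{\perf}$. Second, patchwise stability of $\Qintp$ from \eqref{stableproj}, summed over the bounded overlap $C_{ol}$, gives $\TwoNorm{\nabla u_H}{\perf}=\TwoNorm{\nabla \Qintp u}{\perf}\leq C_{ol}^{1/2}(1+\local)\TwoNorm{\nabla u}{\perf}$. Third, the standard energy estimate for \eqref{varform} together with the global Poincar\'e inequality on $\perf$ yields $\TwoNorm{\nabla u}{\perf}\leq C_P\TwoNorm{g}{\perf}$. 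Chaining these and using $\local\leq CC_P$ delivers the $m^{d/2}C_5\theta^m\TwoNorm{g}{\perf}$ contribution that the theorem advertises.

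The main obstacle is the careful bookkeeping of Poincar\'e constants through each step so that $C_{ol}^{1/2}\local\leq CC_P$ and $C_5\leq CC_P^4$ emerge cleanly. Roughly, $C_4=C_3(1+C_1^2)^{1/2}$ already carries two powers of $C_P$ through $\local$ and the appendix constant $C_1$, while the reduction of $\TwoNorm{\nabla Q_{\perf}(u_H)}{\perf}$ to $\TwoNorm{g}{\perf}$ costs two further powers (one from $\Qintp$-stability, one from the global Poincar\'e bound on $u$), for a total of $C_P^4$. The decay factor $\theta\in(0,1)$ is inherited from the appendix lemma, and the polynomial prefactor $m^{d/2}$ is kept explicit but could be absorbed into any slightly enlarged $\theta$ if desired.
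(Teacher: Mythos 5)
Your architecture matches the paper's: split the error by the triangle inequality into the ideal (global-corrector) error plus the localization defect, invoke Theorem \ref{errorglobal} for the first piece and Lemma \ref{localglobal} for the second, and then reduce $\TwoNorm{\nabla Q_{\perf}(u_{H})}{\perf}$ to the data. Making the C\'{e}a/best-approximation step explicit with the candidate $u_{H}-Q_{m}(u_{H})\in\wt{V}^{ms}_{H,m}$ is in fact a cleaner justification than the paper's direct identification of $u^{ms}_{H}-u^{ms}_{H,m}$ with $Q_{\perf}(u_{H})-Q_{m}(u_{H})$, so no complaint there.

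The gap is in the final constant bookkeeping, which is precisely the substantive content of the claims $C_{ol}^{1/2}\local\leq CC_{P}$ and $C_{5}\leq CC_{P}^{4}$. Your three-step chain $\TwoNorm{\nabla Q_{\perf}(u_{H})}{\perf}\leq\TwoNorm{\nabla u_{H}}{\perf}\leq C_{ol}^{1/2}(1+\local)\TwoNorm{\nabla u}{\perf}\leq C(1+\local)C_{P}\TwoNorm{g}{\perf}$ costs \emph{two} powers of $C_{P}$ (one from the $\Qintp$-stability constant $\local\approx C_{P}$, one from the Friedrichs/energy estimate for $u$), whereas the paper's reduction $\TwoNorm{\nabla Q_{\perf}(u_{H})}{\perf}\leq\TwoNorm{\nabla u^{ms}_{H}}{\perf}\leq C_{P}\TwoNorm{g}{\perf}$ --- the energy estimate applied directly to the ideal Galerkin solution of \eqref{msvarform} --- costs only one. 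Moreover, your assertion that $C_{4}$ ``already carries two powers of $C_{P}$'' is inconsistent with the appendix: $C_{1}^{2}=C_{lip}^{2}\local+\local^{3}$ gives $C_{1}\approx C_{P}^{3/2}$, hence $C_{3}=(1+C_{1}+C\local)\approx C_{P}^{3/2}$ and $C_{4}=C_{3}(1+C_{1}^{2})^{1/2}\approx C_{P}^{3}$. Combining $C_{4}\approx C_{P}^{3}$ with your two-power reduction lands you at $C_{5}\approx C_{P}^{5}$, not the advertised $C_{P}^{4}$; your own tally of ``two plus two'' only closes because of the miscounted $C_{4}$. To recover the stated bound, bound $\TwoNorm{\nabla Q_{\perf}(u_{H})}{\perf}$ by the energy norm of the ideal multiscale Galerkin solution and use its a priori estimate, rather than detouring through $\TwoNorm{\nabla u_{H}}{\perf}$ and the exact solution $u$.
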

\begin{remark}
Note from Lemma \ref{decaylemma}, we  have $\theta=e^{-\frac{1}{\lceil C_{2} e \rceil+2}}\in (0,1)$,  here  $C_{2}=(C_{1} +C\local )\approx C_{P}^{\frac{3}{2}}$. Thus, the Poincar\'{e} constant effects the estimate in Lemma \ref{decaylemma}  insofar as it may slower the decay rate of the exponential and not lead to some sort of exponential "blow-up" with respect to patch extensions. 
\end{remark}
\begin{proof}[Proof of Theorem \ref{errorlocal}]
We let $u^{ms}_{H}=u_{H}-Q_{\perf}(u_{H})$ be the ideal global multiscale solution satisfying \eqref{msvarform}, we have using Theorem \ref{errorglobal} and Lemma 
\ref{localglobal}
\begin{align*}
\TwoNorm{\nabla u-\nabla u_{H,m}^{ms}}{\perf}&\leq\TwoNorm{\nabla u-\nabla u_{H}^{ms}}{\perf}+\TwoNorm{\nabla u_{H}^{ms}-\nabla u_{H,m}^{ms}}{\perf}\\
&\leq C_{ol}^{\frac{1}{2}} \local H\TwoNorm{ g}{\perf}+\TwoNorm{\nabla( Q_{\perf}(u_{H})-Q_{m}(u_{H}))  }{\perf}\\
&\leq C_{ol}^{\frac{1}{2}} \local H\TwoNorm{ g}{\perf}+ m^{\frac{d}{2}}C_{4} \theta^{m   } \TwoNorm{ \nabla Q_{\perf}(u_{H})  }{\perf}. 
\end{align*}
Finally, noting that  from \eqref{Qcorrector} and  \eqref{msvarform} we have 
\begin{align*}
\TwoNorm{ \nabla Q_{\perf}(u_{H})  }{\perf} \leq \TwoNorm{ \nabla u^{ms}_{H} }{\perf}\leq C_{P}  \TwoNorm{ g }{\perf},
\end{align*}
applying this above we obtain the required estimate.

To obtain the relationship on the above estimate to the Poincar\'{e} constants note from \eqref{stableproj} that $\local \approx C_{P}$. From Lemma \ref{qi}, we have 
$C^2_{1}=C_{lip}^2\local +\local^3$. We have $C_{lip}\leq C C_{P}$, thus $C_{1}\approx C_{P}^\frac{3}{2}$. From Lemma \ref{localglobal}, 
$$C_{4}=C_{3}(1+C_{1}^2)^{\frac{1}{2}}=(1+C_{1}+\local)(1+C_{1}^2)^{\frac{1}{2}}\approx C_{P}^3,$$
and so $C_{5}=C_{P}C_{4}\approx C_{P}^4$
\qed
\end{proof}


\section{Estimates for  Poincar\'{e} Inequalities}\label{poinsection}
In this section, we discuss the tools required to estimate the constant $C_{P}$ in certain physically interesting cases. The following techniques were developed and used effectively in the context of weighted Poincar\'{e} inequalities in the setting of contrast dependence \cite{Pechstein01042013} and references therein. We follow much of the notation presented in that work, however, here we adapt the techniques to complex domain geometries and not contrast independent estimates. 
The case of high-contrast will be discussed in the forthcoming preprint \cite{PS2014}.
We begin by building the necessary framework to effectively estimate $C_P$ in a constructive way. Throughout this section we shall suppose that $H>\eta$, the characteristic separation and length scale size. 

We begin by fixing $x\in {\cal N}_{H}$ and examining a single patch $\wt{\omega}_{x}$. We will have a slight abuse of notation we call this constant Poincar\'{e} $C_{P}$ as we will take a maximum over all patches. We suppose that the estimate on this  patch bounds all the others. 
We begin as in \cite{Pechstein01042013}, let ${\cal Y}=\{Y_{l}\}_{l=1}^{n}$ be a non overlapping partitioning of $\wt{\omega}_{x}$ into open, connected Lipschitz  polytopes so that
\begin{align*}
\overline{\wt{\omega}}_{x}=\bigcup_{l=1}^{n} \overline{Y}_{l},
\end{align*}
with $H=\text{diam}(\wt{\omega}_{x}).$  For $u\in H^1(\wt{\omega}_{x})$ and $(d-1)$ dimensional manifold $X\subset \wt{\omega}_{x}$ we define the average 
\begin{align*}
\bar{u}^X=\frac{1}{|X|}\int_{X}u ds,
\end{align*}
here the above integral is taken with respect to the $(d-1)$ dimensional real Lebesgue measure $ds$. 

We call a region $P_{l_{1},l_{s}}=( \overline{Y}_{l_{1}}\cup  \overline{Y}_{l_{2}}\cup\cdots \cup  \overline{Y}_{l_{s}})$ a path if  for each $i=1,\dots,s-1$, the regions  $\overline{Y}_{l_{i}}$ and $\overline{Y}_{l_{i+1}}$ share a common $(d-1)$-dimensional manifold. Here, $s$ is the length of the path $P_{l_{1},l_{s}}$. Suppose there is a path $P_{k,l^*}$ from $Y_{k}$ to $Y_{l^*}$ with path length $s_{k}$. 
Let $X^*\subset \bar{Y}_{l^*}$ be a $(d-1)$ dimensional manifold, then for each $k=1,2,\dots, n$ let $c_{k}^{X^*}>0$ be the best constant so that 
\begin{align}\label{poinpath}
\TwoNorm{u-\bar{u}^{X^*}}{Y_k}^2\leq (c_{k}^{X^*})^2 H^2\TwoNorm{\nabla u}{P_{k,l^*}}^2,
\end{align}
for all $u\in H^1(P_{k,l^*})$. Note here we make a change of notation compared to \cite{Pechstein01042013}, in that we replace $c_{k}^{X^*}$ with its square, similarly with $C_{P}$ and related constants.

We now define the Poincar\'{e} inequality for a single domain, most likely in our application to be a simplicial domain such as a triangle, tetrahedron, or perhaps nonsimplicial, but regular,  such as quadrilaterals,  parallelepiped, or curved elements. The key here being that each simplex has a trivially bounded Poincar\'{e} constant.
For any Lipschitz domain $Y \subset \mathbb{R}^d$ and for any $(d-1)$ dimensional manifold $X\subset \bar{Y}$, we denote $C_{P}(Y;X)>0$ to be the best constant such that 
\begin{align}\label{poinsimplex}
\TwoNorm{u-\bar{u}^{X}}{Y}^2\leq C^2_{P}(Y;X) \text{diam}(Y)^2\TwoNorm{\nabla u}{Y}^2, 
\end{align}
for all $u\in H^1(Y)$. We have the following lemma relating the constants in \eqref{poinpath} and \eqref{poinsimplex}. 

\begin{lemma}\label{lemmacx}
Suppose $P_{k,l^*}$ is a path as defined above of length $s$ with $l_{1}=k$ and $l_{s}=l^*$. We let $X_{0}=X_{1}$ and $X_{s}=X^*$. Then, the constant from \eqref{poinpath} can be bounded by the constants related to inequality \eqref{poinsimplex}
\end{lemma}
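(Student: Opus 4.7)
The plan is to derive the bound by combining the local inequality \eqref{poinsimplex} on each element $Y_{l_i}$ with a telescoping argument through the intermediate interface manifolds $X_1,\ldots,X_{s-1}$. The point is that a priori $\bar u^{X^*}$ is defined only in the final element, while $Y_k$ sits at the other end of the path; the $X_i$'s are built precisely to connect these two ends element-by-element, each $X_i$ being a $(d-1)$-face of $Y_{l_i}$ and simultaneously of $Y_{l_{i+1}}$.

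First, I would split on $Y_k = Y_{l_1}$ using the triangle inequality,
\begin{equation*}
\TwoNorm{u-\bar u^{X^*}}{Y_k} \le \TwoNorm{u-\bar u^{X_1}}{Y_k} + |Y_k|^{1/2}\,|\bar u^{X_1}-\bar u^{X^*}|,
\end{equation*}
and bound the first term directly by \eqref{poinsimplex} applied on $(Y_k,X_1)$, yielding $C_P(Y_k;X_1)\,H\,\TwoNorm{\nabla u}{Y_k}$. With the convention $X_0=X_1$ this is compatible with reusing the same bookkeeping indexing for all elements.

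Next, I would telescope the scalar jump through the path:
\begin{equation*}
|\bar u^{X_1}-\bar u^{X^*}| \le \sum_{i=1}^{s-1} |\bar u^{X_i}-\bar u^{X_{i+1}}|.
\end{equation*}
The key observation is that for each $i$, both manifolds $X_i$ and $X_{i+1}$ lie in $\overline Y_{l_{i+1}}$ (since $X_i$ is the shared face between $Y_{l_i}$ and $Y_{l_{i+1}}$, and $X_{i+1}$ is the shared face between $Y_{l_{i+1}}$ and $Y_{l_{i+2}}$, with $X_s=X^*\subset\overline Y_{l_s}$). Therefore the constants $\bar u^{X_i},\bar u^{X_{i+1}}$ can both be compared to $u$ on $Y_{l_{i+1}}$ via \eqref{poinsimplex}, and after an $L^2(Y_{l_{i+1}})$ triangle inequality one obtains
\begin{equation*}
|\bar u^{X_i}-\bar u^{X_{i+1}}| \le \frac{H\bigl(C_P(Y_{l_{i+1}};X_i)+C_P(Y_{l_{i+1}};X_{i+1})\bigr)}{|Y_{l_{i+1}}|^{1/2}}\TwoNorm{\nabla u}{Y_{l_{i+1}}}.
\end{equation*}

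Finally, I square, apply Cauchy--Schwarz over the $s-1$ summands (this is where a factor of $s$ appears), multiply by $|Y_k|$ to produce the second term from the initial splitting, and recombine using the fact that the $Y_{l_i}$ are disjoint so $\sum_i\TwoNorm{\nabla u}{Y_{l_i}}^2 = \TwoNorm{\nabla u}{P_{k,l^*}}^2$. This yields a bound of the form
\begin{equation*}
(c_k^{X^*})^2 \le 2\,C_P(Y_k;X_1)^2 + 2s\,\max_{1\le i\le s}\frac{|Y_k|}{|Y_{l_i}|}\max_{1\le i\le s}\bigl(C_P(Y_{l_i};X_{i-1})+C_P(Y_{l_i};X_i)\bigr)^2,
\end{equation*}
which is the desired relation. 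The main obstacle is purely bookkeeping, namely keeping the indexing of which manifold belongs to which element straight (the $X_0=X_1$ convention is there to make the endpoint summation uniform) and correctly accounting for the volume-ratio factors $|Y_k|/|Y_{l_i}|$, which reflect that elements along the path may have different sizes and which ultimately get absorbed into $C_P$ under the quasi-uniformity assumption of the partitioning $\mathcal Y$.
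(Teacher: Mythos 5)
Your proof follows the paper's argument in all essentials: telescope through the interface averages $\bar u^{X_i}$, control each jump $|\bar u^{X_i}-\bar u^{X_{i+1}}|$ by applying \eqref{poinsimplex} twice on the single element containing both faces, and recombine with Cauchy--Schwarz using the disjointness of the path elements. Two quantitative remarks, since the constants here feed directly into Lemma \ref{meth1}: the paper retains $\mathrm{diam}(Y_{l_i})$ rather than bounding it by $H$ in the jump estimate, which is precisely what produces the factor $\mathrm{diam}(Y_{l_i})^2/H^2$ in the stated bound and hence the $1/H^2$ scaling exploited later; and the paper applies Cauchy--Schwarz in the weighted form $\sum_i c_i\TwoNorm{\nabla u}{Y_{l_i}}\le\bigl(\sum_i c_i^2\bigr)^{1/2}\TwoNorm{\nabla u}{P_{k,l^*}}$, which yields a sum $\sum_{i=1}^{s}$ over the path rather than your cruder $s\cdot\max_i$. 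Your version is a valid bound for the qualitative statement, but to recover the paper's displayed inequality you should keep $\mathrm{diam}(Y_{l_{i+1}})$ in place of $H$ and pair the coefficients with the gradient norms in the final Cauchy--Schwarz step.
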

\begin{align}
(c_{k}^{X^*})^2\leq 4 \sum_{i=1}^s \frac{|Y_k|}{|Y_{l_{i}}|} \frac{\text{diam}(Y_{l_i})^2}{H^2}\mbox{max}(C^2_P(Y_{l_i},X_{i-1}),C^2_{P}(Y_{l_i},X_{i}))
\end{align}
\begin{proof}
By using the standard telescoping argument
\begin{align*}
\TwoNorm{u-\bar{u}^{X^*}}{Y_k}& \leq \TwoNorm{u-\bar{u}^{X_1}}{Y_k}+\sum_{i=2}^s \sqrt{|Y_{k}|}|\bar{u}^{X_{i-1}}-\bar{u}^{X_{i}}|,
\end{align*}
 and the use of \eqref{poinsimplex} we have 
 \begin{align*}
 \TwoNorm{u-\bar{u}^{X_1}}{Y_k}^2 \leq C^2_{P}(Y_k;X_1) \text{diam}(Y_k)^2\TwoNorm{\nabla u}{Y_k}^2.
 \end{align*}
Fixing $i$ we have for the second term 
\begin{align*}
|\bar{u}^{X_{i-1}}-\bar{u}^{X_{i}}|^2&\leq \frac{2}{|Y_{l_i}|}\left(  \TwoNorm{u-\bar{u}^{X_{i-1}}}{Y_{l_i}}^2+\TwoNorm{u-\bar{u}^{X_{i}}}{Y_{l_i}}^2 \right)\\
&\leq  \frac{2}{|Y_{l_i}|}  \left( (C^2_{P}(Y_{l_i};X_{i-1})+C^2_{P}(Y_{l_i};X_{i})) \text{diam}(Y_{l_i})^2\TwoNorm{\nabla u}{Y_{l_i}}^2\right)\\
&\leq 4 \text{max}(C^2_{P}(Y_{l_i};X_{i-1}),C^2_{P}(Y_{l_i};X_{i})) \frac{ \text{diam}(Y_{l_i})^2}{|Y_{l_i}|}   \TwoNorm{\nabla u}{Y_{l_i}}^2.
\end{align*}
A final application of the Cauchy inequality yields the desired result. 
\qed
\end{proof}

We define $(C_{P})^2=\sum_{k=1}^n (c_{k}^{X^*})^2$ and we have the general full Poincar\'{e} inequality
\begin{align}\label{pointotal}
\TwoNorm{u-\avrg{u}{\wt{\omega}_{x}}}{\wt{\omega}_{x}}^2\leq\TwoNorm{u-\bar{u}^{X^*}}{\wt{\omega}_{x}}^2\leq C_{P}^{2} H^2\TwoNorm{\nabla u}{\wt{\omega}_{x}}^2,
\end{align}
recall here  $\avrg{u}{\wt{\omega}_{x}}=\frac{1}{|\wt{\omega}_x|}\int_{\wt{\omega}_x}u dz$ is the optimal minimizing constant.
\subsection{Poincar\'{e} Inequalities with Geometric Parameters}

To obtain better bounds on $C_{P}$ we must in turn obtain a systematic way to obtain bounds on $c_{k}^{X^*}$. To this end, we will use the following two technical lemmas.
The first of which estimates the constant $C_{P}(K;F)$ for a simplex.
\begin{lemma}\label{simplicial} Let $K$ be a simplex (or parallelepiped), and $F$ one of its faces, then
\begin{align*}
C^2_{P}(K;F)\leq \frac{7}{5}.
\end{align*}
\end{lemma}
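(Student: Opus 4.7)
The plan is to split the target norm by orthogonality: since $\int_K (u - \bar u^K)\,dz = 0$, one has
\[
\|u - \bar u^F\|_{L^2(K)}^2 = \|u - \bar u^K\|_{L^2(K)}^2 + |K|\,(\bar u^K - \bar u^F)^2,
\]
which reduces the task to an ``interior'' Poincar\'e estimate against the volume mean and a control on the mean-discrepancy $|\bar u^K - \bar u^F|$ on the face. For the first term, convexity of $K$ (simplex or parallelepiped) allows me to invoke the Payne--Weinberger inequality $\|u - \bar u^K\|_{L^2(K)}^2 \leq \pi^{-2}\,\mathrm{diam}(K)^2\,\|\nabla u\|_{L^2(K)}^2$, contributing $\pi^{-2}$ to $C_P^2(K;F)$, which is comfortably inside the $7/5$ budget.

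For the second term, I would set $v = u - \bar u^K$ and use Cauchy--Schwarz on $\bar u^K - \bar u^F = -|F|^{-1}\int_F v\,ds$ to reduce to a trace norm: $|\bar u^K - \bar u^F|^2 \leq |F|^{-1}\|v\|_{L^2(F)}^2$. The trace norm is then handled by the integration-by-parts identity
\[
\int_{\partial K} v^2\,(\vec\phi\cdot n)\,ds = \int_K \bigl(2 v\,\nabla v\cdot\vec\phi + v^2\,\mathrm{div}\,\vec\phi\bigr)\,dz,
\]
with the explicit choice $\vec\phi(x) = x - x_0$, where $x_0$ is the vertex of the simplex $K$ opposite $F$. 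On a simplex every face other than $F$ contains $x_0$, so $\vec\phi\cdot n$ vanishes there; on $F$ itself $\vec\phi\cdot n$ equals the constant height $h_F$, while globally $|\vec\phi|\leq\mathrm{diam}(K)$ and $\mathrm{div}\,\vec\phi = d$. Combined with the volume identity $|F|\,h_F = d\,|K|$, Cauchy--Schwarz on the right-hand side produces
\[
|K|\,(\bar u^K-\bar u^F)^2 \leq \tfrac{2}{d}\,\mathrm{diam}(K)\,\|v\|_{L^2(K)}\|\nabla v\|_{L^2(K)} + \|v\|_{L^2(K)}^2,
\]
and a second application of Payne--Weinberger to remove $\|v\|_{L^2(K)}$ contributes $\pi^{-2} + 2/(d\pi)$. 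The parallelepiped case is treated analogously with a coordinate-aligned $\vec\phi$ pointing in the direction normal to $F$.

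The main obstacle is squeezing the final numerical constant all the way down to $7/5$. The rough bookkeeping just outlined gives $C_P^2(K;F)\leq 2\pi^{-2}+2(d\pi)^{-1}$, which is already less than $7/5$ for every $d\geq 2$, so the stated bound follows; however, this route is wasteful, and obtaining the optimal constant would require jointly optimizing the choice of the vector field $\vec\phi$ and the split between the two summands, or equivalently invoking the sharp computations of Pechstein--Scheichl \cite{Pechstein01042013} on a reference simplex followed by scaling.
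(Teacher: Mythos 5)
Your argument is correct, and it is genuinely different from what the paper does: the paper gives no proof at all, deferring entirely to the appendix of \cite{Pechstein01042013}, where the constant $7/5$ is obtained by a direct integral-representation computation on the simplex. Your route instead splits off the volume mean by $L^2$-orthogonality, controls $\norm{u-\avrg{u}{K}}_{L^2(K)}$ by the Payne--Weinberger inequality for convex domains, and bounds the mean discrepancy $|\avrg{u}{K}-\bar u^F|$ through the divergence-theorem trace identity with $\vec\phi(x)=x-x_0$; I checked the key facts you use ($\vec\phi\cdot n=0$ on the faces through $x_0$, $\vec\phi\cdot n=h_F$ on $F$, $\operatorname{div}\vec\phi=d$, $|F|\,h_F=d\,|K|$) and the resulting bound $C_P^2(K;F)\le 2\pi^{-2}+2(d\pi)^{-1}\le 2\pi^{-2}+\pi^{-1}<7/5$, which is valid for every $d\ge 2$, so the lemma follows a fortiori. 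For the parallelepiped your sketch also goes through, e.g.\ with $\vec\phi=t_1 a_1$ in the affine coordinate transverse to $F$ (giving $\operatorname{div}\vec\phi=1$ and $|K|/|F|=h_F$, hence the slightly larger but still admissible $2\pi^{-2}+2\pi^{-1}$). What the two approaches buy: the cited computation is fully elementary and self-contained, whereas yours is shorter and yields a sharper constant, but only modulo Payne--Weinberger, which is itself a nontrivial input; your closing worry about ``squeezing down to $7/5$'' is moot, since the lemma only asserts an upper bound and your constant already beats it.
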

\begin{proof}
See Appendix of \cite{Pechstein01042013}.
\end{proof}
We also state a common estimate for regular triangulation.
\begin{lemma}\label{shapeReg} Let $K$ a nondegenerate simplex and $\rho(K)$ the diameter of the largest sphere inscribed in $\bar{K}$, then  
\end{lemma}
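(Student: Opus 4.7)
The lemma is a purely geometric statement about a nondegenerate simplex and its inscribed sphere, so the natural strategy is classical and builds on the identity relating the volume, inradius, and surface area of a simplex. No Sobolev or functional-analytic input should be required.

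First, I would exploit the fact that the incenter $x_0$ of $K$ is equidistant from each of the $d+1$ faces $F_0,\dots,F_d$, the common distance being the inradius $r=\rho(K)/2$. Joining $x_0$ by line segments to every vertex of $K$ decomposes $K$ into $d+1$ pyramidal sub-simplices, each of which has one face $F_i$ as base and $x_0$ as apex, so its height is exactly $r$. Applying the pyramid volume formula to each piece and summing gives the fundamental identity
\begin{align*}
|K| \;=\; \frac{\rho(K)}{2d}\sum_{i=0}^d |F_i| \;=\; \frac{\rho(K)}{2d}\,|\partial K|.
\end{align*}

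From this identity two useful inequalities follow with essentially no further work. Discarding all but a single face yields the face bound $|K|\geq \frac{\rho(K)}{2d}|F|$ valid for any face $F$ of $K$. Alternatively, since the inscribed ball itself lies inside $K$, we obtain the volumetric bound $|K|\geq \omega_d\bigl(\rho(K)/2\bigr)^d$, where $\omega_d$ is the volume of the unit ball in $\mathbb{R}^d$. Either bound combines with the trivial estimate $\mathrm{diam}(K)\geq \rho(K)$ to produce the desired shape-regularity comparison between $\mathrm{diam}(K)$, $|K|$, and $\rho(K)$ that the remainder of Section~\ref{poinsection} needs to feed back into Lemma~\ref{lemmacx}.

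The main obstacle here is essentially cosmetic rather than analytic: selecting the right normalization so that the resulting constant meshes cleanly with the $|Y_k|/|Y_{l_i}|$ and $\mathrm{diam}(Y_{l_i})^2/H^2$ factors in the path estimate of Lemma~\ref{lemmacx}. All the geometric ingredients above are elementary, and the only care required is in tracking the dimensional constant $d$ through the pyramidal decomposition. Once the target normalization is fixed, the proof reduces to invoking the identity together with one of the two follow-up inequalities and collecting constants.
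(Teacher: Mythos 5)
Your pyramidal decomposition and the identity $|K|=\tfrac{\rho(K)}{2d}\,|\partial K|$ are correct, as are the two consequences you extract from it. The gap is in the last step: neither consequence implies the asserted bound $|K|\geq \mathrm{diam}(K)\bigl(\rho(K)/2\bigr)^{d-1}$, and the ``trivial estimate $\mathrm{diam}(K)\geq\rho(K)$'' points in the \emph{wrong direction} to bridge it. From $|K|\geq\omega_d\bigl(\rho(K)/2\bigr)^{d}=\omega_d\,\bigl(\rho(K)/2\bigr)\bigl(\rho(K)/2\bigr)^{d-1}$ you would have to replace the factor $\rho(K)/2$ by the possibly much larger factor $\mathrm{diam}(K)$ inside a \emph{lower} bound, i.e.\ you would need $\rho(K)\gtrsim\mathrm{diam}(K)$, which fails exactly for the thin, elongated simplices this shape-regularity lemma is meant to control. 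Likewise the single-face bound $|K|\geq\tfrac{\rho(K)}{2d}|F|$ only closes if you also prove $|F|\gtrsim \mathrm{diam}(K)\,\rho(K)^{d-2}$, which is the same lemma one dimension down and is not addressed (the inscribed ball of $K$ does not in general project into a face, so there is no immediate induction). The target inequality is genuinely anisotropic --- it must register the elongation $\mathrm{diam}(K)$ and the thickness $\rho(K)$ simultaneously --- and no isotropic ball bound can deliver it.

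For comparison, the paper does not prove the lemma at all; it simply cites the appendix of the Pechstein--Scheichl reference. If you want a self-contained argument: in $d=2$ your identity does work provided you keep the whole boundary, since the perimeter $P$ of a triangle exceeds twice its longest edge, giving $|K|=\tfrac{\rho(K)}{4}P\geq\tfrac{\rho(K)}{2}\,\mathrm{diam}(K)$, which is the claim. In general dimension a standard route is to slice $K$ orthogonally to a diameter-realizing edge: the projection of $K$ onto that direction is an interval of length $\mathrm{diam}(K)$, the slice through the incenter contains a $(d-1)$-ball of radius $\rho(K)/2$, and concavity of $t\mapsto|K_t|^{1/(d-1)}$ (Brunn--Minkowski) gives $|K|\geq\tfrac{\omega_{d-1}}{d}\,\mathrm{diam}(K)\bigl(\rho(K)/2\bigr)^{d-1}$, i.e.\ the stated bound up to a dimensional constant that is at least $1$ for $d=2,3$. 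As written, however, the step you invoke to finish the proof does not exist.
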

\begin{align*}
|K|\geq \text{diam}(K)\left(\frac{\rho(K)}{2} \right)^{d-1}.
\end{align*}
\begin{proof}
See Appendix of \cite{Pechstein01042013}.
\end{proof}
Let ${\cal Y}=\{Y_{l}\}_{l=1}^{n}$ be a conforming simplicial triangulation  of $\wt{\omega}_{x}$ and we define the geometric parameters for $l=0,\dots, n$, 
$\eta_{l}=\text{diam}(Y_{l})$, $\eta=\text{max}(\eta_{l})$, and $\eta_{min}=\text{min}(\eta_{l})$. We define the shape-regularity constant 
\begin{align*}
C^{\cal Y}_{reg}=\max_{l=1}^{n}\left(\frac{\text{diam}(Y_{l})}{\rho(Y_{l})}\right).
\end{align*}
We call a partition of $\wt{\omega}_{x}$ shape regular if there is a uniform bound for $C^{\cal Y}_{reg}$ and quasi-uniform if in addition to shape regular we have $\eta/\eta_{min}$ uniformly bounded. 
With this type of a partition we are able to obtain a useful tool to estimate $C_{P}$.
\begin{lemma}\label{meth1} Let ${\cal Y}=\{Y_{l}\}_{l=1}^{n}$ be a shape regular  simplicial partition  of $\wt{\omega}_{x}$, with $X^*$ a facet of $Y_{l^*}$.  We  denote the path length for $Y_{k}$ to $Y_{l^*}$ by $s_{k}$.
Then, we have the bound
\end{lemma}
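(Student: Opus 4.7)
The plan is to assemble the three pieces already in place: the telescoping bound on $c_k^{X^*}$ from Lemma \ref{lemmacx}, the universal face-average Poincar\'e constant $C_P^2(K;F)\le 7/5$ for each simplex from Lemma \ref{simplicial}, and the volume lower bound $|K|\ge \text{diam}(K)(\rho(K)/2)^{d-1}$ from Lemma \ref{shapeReg}. The expected bound is of the form
\begin{equation*}
(c_k^{X^*})^2 \le C\,(C^{\cal Y}_{reg})^{d-1}\sum_{i=1}^{s_k} \frac{|Y_k|}{|Y_{l_i}|}\frac{\eta_{l_i}^2}{H^2},
\end{equation*}
and therefore, after summing over $k$ and using shape regularity, $C_P^2 \le C\,(C^{\cal Y}_{reg})^{d-1} \sum_{k=1}^n s_k\, \eta^2/H^2$ in the quasi-uniform case, with extra factors of $\eta/\eta_{\min}$ when the mesh is only shape regular.

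First I would plug the estimate of Lemma \ref{simplicial} directly into the telescoping inequality of Lemma \ref{lemmacx}. Since each $Y_{l_i}$ is a simplex and each $X_i$ is a facet, we have $\max(C_P^2(Y_{l_i};X_{i-1}),C_P^2(Y_{l_i};X_i)) \le 7/5$, which removes the $X$-dependence and collapses the inner estimate to a purely geometric sum. This step is routine.

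Second I would turn the volume ratios into quantities controlled by shape regularity. A simplex $Y_k$ satisfies the trivial upper bound $|Y_k|\le C\,\eta_k^d$, while Lemma \ref{shapeReg} combined with $\rho(Y_{l_i})\ge \eta_{l_i}/C^{\cal Y}_{reg}$ gives $|Y_{l_i}|\ge \eta_{l_i}^d/(2C^{\cal Y}_{reg})^{d-1}$. Substituting into the bound yields
\begin{equation*}
(c_k^{X^*})^2 \le \tfrac{28}{5}\,(2C^{\cal Y}_{reg})^{d-1} \sum_{i=1}^{s_k} \Bigl(\frac{\eta_k}{\eta_{l_i}}\Bigr)^{\!d}\frac{\eta_{l_i}^2}{H^2},
\end{equation*}
and summing over $k$ together with $(C_P)^2 = \sum_k (c_k^{X^*})^2$ gives the claimed bound via \eqref{pointotal}.

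The main obstacle is bookkeeping rather than substance: one must be careful that the paths $P_{k,l^*}$ can be chosen so that the total path weight $\sum_k s_k (\eta_k/\eta_{l_i})^d \eta_{l_i}^2/H^2$ stays controlled, since in applications the $Y_l$ will be elements of a micro-scale mesh with $\eta_l\sim \eta$ and $s_k\sim H/\eta$, so the nominal estimate degrades like $(H/\eta)\cdot(\eta/H)^2 = \eta/H$ per path, and summing over $n\sim (H/\eta)^d$ patches is what ultimately determines whether $C_P$ is bounded or grows with $\eta\to 0$. The rest of the argument is a clean substitution, but the geometry that controls $s_k$ and the quasi-uniformity constant $\eta/\eta_{\min}$ is where later applications (thin filaments versus isolated inclusions) will diverge.
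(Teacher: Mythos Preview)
Your proposal is correct and follows essentially the same route as the paper: apply Lemma~\ref{lemmacx}, insert the uniform bound $7/5$ from Lemma~\ref{simplicial}, and use Lemma~\ref{shapeReg} together with the definition of $C^{\cal Y}_{reg}$ to control $\text{diam}(Y_{l_i})^2/|Y_{l_i}|$. The only cosmetic difference is that the paper stops after bounding $\text{diam}(Y_{l_i})^2/|Y_{l_i}|\le 2^{d+1}(C^{\cal Y}_{reg})^{d-1}\eta_{l_i}^{2-d}\le 2^{d+1}(C^{\cal Y}_{reg})^{d-1}\eta_{\min}^{2-d}$ and keeps $|Y_k|$ intact, arriving at the stated form $\sum_k s_k|Y_k|/(H^2\eta_{\min}^{d-2})$, whereas you additionally estimate $|Y_k|\le C\eta_k^d$; your final paragraph about path selection belongs to the subsequent applications rather than to the proof of this lemma.
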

\begin{align}\label{poinest1}
C_{P}^2 \leq  \left(\frac{28}{5}\right) 2^{d+1}(C^{\cal Y}_{reg})^{d-1} \sum_{k=1}^{n} \frac{s_k |Y_{k}|}{H^2 \eta_{min}^{d-2}}.
\end{align}
\begin{proof}
From Lemma \ref{lemmacx} we have for a fixed $k$ and path $P_{k,l^*}$ of simplicial domains that
\begin{align*}
(c_{k}^{X^*})^2\leq 4 \sum_{i=1}^{s_k} \frac{|Y_k|}{|Y_{l_{i}}|} \frac{\text{diam}(Y_{l_i})^2}{H^2}\mbox{max}(C^2_P(Y_{l_i},X_{i-1}),C^2_{P}(Y_{l_i},X_{i})).
\end{align*}
For $i=1,\dots s_k$, using Lemma \ref{simplicial} we see, taking $K=Y_{l_{i}}$ and $F=X_{i-1}$ or $F=X_{i}$,  that  $$\mbox{max}(C^2_P(Y_{l_i},X_{i-1}),C^2_{P}(Y_{l_i},X_{i}))\leq \frac{7}{5}.$$
From shape regularity and Lemma \ref{shapeReg} we have 
$$
\frac{\text{diam}(Y_{l_i})^2}{|Y_{l_{i}}|}\leq 2^{d+1}(C^{\cal Y}_{reg})^{d-1} \eta_{l_{i}}^{2-d}.
$$
Taking the minimum $\eta_{l_{i}}$ we have 
\begin{align*}
(c_{k}^{X^*})^2\leq  \left(\frac{28}{5}\right)\sum_{i=1}^{s_k} 2^{d+1}(C^{\cal Y}_{reg})^{d-1} \frac{|Y_k|}{H^2  \eta_{min}^{d-2}}\leq  \left(\frac{28}{5}\right)  2^{d+1}(C^{\cal Y}_{reg})^{d-1} \frac{{s_k}|Y_k|}{H^2  \eta_{min}^{d-2}}.
\end{align*}
Summing from $k=1,\dots n$ over the simplices we obtain the estimate.
\qed
\end{proof}
As noted in \cite{Pechstein01042013}, the above lemma can give "worst case" scenarios for estimates on Poincar\'{e} constants. To illustrate the usefulness of the above estimate \eqref{poinest1} to obtain rough bounds we give an illustrative example. It can be easily seen that the estimate  \eqref{poinest1} will grow when the path lengths, $s_k$, are large. This can be especially bad in highly tortuous microstructures. We illuminate this by considering a two-dimensional filamented microstructure. 

Suppose we take our domain to be ${\omega}_{x}=[0,H]^2$, and inside we have the solid microstructure, $S_\eta$, given by thin filamented structures. More precisely, 
$$S_\eta=\bigcup_{j=0}^{N_{\eta}}\left( ([4\eta j,4\eta j+\eta]\times [0,H-\eta])\cup ([4\eta j+2\eta,4\eta j+3\eta]\times [\eta, H]) \right),$$
where $N_{\eta}\leq \left \lfloor{ \frac{H}{4 \eta}}\right \rfloor$.  Note we take here the floor of  $\frac{H}{4 \eta}$ to ensure $N_\eta$ is such that we have the right hand side boundary free of microstructure intersections. This is done since we will suppose that $X^*=\{H\} \times[0,H]$ and we wish this boundary to be a part of the domain $\wt{\omega}_{x}$ defined as 
$$\wt{\omega}_{x}={\omega}_{x}\backslash S_{\eta}.$$

Suppose we have a uniform shape regular triangularization of $\wt{T}$ denoted again by ${\cal Y}=\{Y_{l} \}_{l=1}^{n}$. Moreover, we suppose that $|Y_{k}|\approx \eta^2$, for all $k=1,\dots n$. We denote $s_{max}$ the maximal path length from $Y_k$ to $X^*$. Then, \eqref{poinest1} becomes 
\begin{align}
C_{P}^2 \leq  C C^{\cal Y}_{reg}\frac{n s_{max} \eta^2 }{H^2 },
\end{align}
here $C$ is a benign constant. To estimate $s_{max}$, we take the simplex farthest from the right hand side boundary $X^*$ denoted $Y_{1}$ to construct the longest path. Note that  $Y_{1}$ is formed by bisecting $[0,\eta]\times [H-\eta,H]$ into two equal triangles and taking the one adjacent to the left boundary of $[0,H]^2$. 
We can see that in each filament the path length is $O(\frac{H}{\eta})$ and there are $O(N_{\eta})\approx O(\frac{H}{\eta})$ filaments. Hence $s_{max}\approx O((\frac{H}{\eta})^2)$, and in addition, we see also that $n\approx O((\frac{H}{\eta})^2)$ as this is the number of triangles in the partition of $\wt{\omega}_{x}$. We thus obtain the estimate for the Poincar\'{e} constant 
\begin{align}\label{poinest2}
C_{P}^2 \leq  C C^{\cal Y}_{reg}\left(\frac{ H }{\eta }\right)^2.
\end{align}
Taking the maximum over the possible constants over the patches, and applying this estimate to Theorem \ref{errorlocal} we have 
\begin{align}
\TwoNorm{\nabla u-\nabla u_{H,m}^{ms}}{\perf}\leq C\left(\left(\frac{ H }{\eta }\right)H+ m^{\frac{d}{2}}\left(\frac{ H }{\eta }\right)^4 e^{-\left(\frac{\eta}{H}\right)^{\frac{3}{2}}m  } \right) \TwoNorm{g}{\perf}.
\end{align}
%
Thus, it is possible to see how the closeness of the microstructure could theoretically effect the convergence estimate via the Poincar\'{e}.
The constant in the exponential may also effect the decay with respect to patch extension. 
 However, the above example is meant to represent a very poor scenario. 

\begin{remark}
It is important to note here that the above estimate holds for $H>\eta$. 
 The Poincar\'{e} constants in the other regimes would certainly not be expected to yield a convergence order of more than $H$ in a regime such as $H<\eta$. In this regime, where scale separation is not the case, the notation $C_{P}=\text{max}(O(1),\frac{H}{\eta})$ would be more appropriate.
\end{remark}

\subsection{Poincar\'{e} Constants for Isolated Perforations}

In the previous section, we presented a general method for determining the dependence of the Poincar\'{e} constant on the microstructure. This estimate offers a sort of worst case scenario for such a constant. In this section, we will show that for isolated convex particles in two-dimensions fairs much better and in fact can be shown to be uniformly bounded. 
To this end we will need some further results again drawn from the work of \cite{Pechstein01042013}.

 \begin{lemma}\label{integralest} Let ${\cal Y}=\{Y_{l} \}_{l=1}^{n}$ be a shape regular and quasi-uniform  simplicial partition  of $\wt{\omega}_{x}$ with mesh size $\eta>0$. Further, let $X^*=\cup_{j=1}^J F_{j}$  
 , where $F_{j}$ are the faces of $Y_j$ and for simplicity suppose $X^*$ is not perforated. For $k\in {\cal I}=\{1,\dots, n\}$ and $j\in {\cal J}=\{1,\dots J\}$ we denote $P_{k,j}$ the path from $Y_k$ to $Y_j$.
Then, we have the bound
\begin{align}\label{integralesteq}
\int_{F_j}\int_{Y_{k}}|u(x)-u(y)|^2 dy ds_x\leq C s_{k,j} \eta^{d+1}\TwoNorm{\nabla u}{P_{k,j}}^2,
\end{align}
for all $u\in H^1(P_{k,j})$ and where $s_{k,j}$ is the path length of $P_{k,j}$.
\end{lemma}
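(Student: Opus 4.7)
The plan is to use a telescoping argument along the path of simplices, replacing $u(x)-u(y)$ by a sum of local oscillations each of which is controlled by a Poincaré or trace inequality on a single simplex. Writing the path as $Y_{l_1}=Y_k,Y_{l_2},\dots,Y_{l_{s}}=Y_j$ with $s=s_{k,j}$, and denoting $\bar{u}_i=\avrg{u}{Y_{l_i}}$ the mean value on each simplex, I would start from the identity
\begin{equation*}
u(x)-u(y)=\bigl(u(x)-\bar{u}_{s}\bigr)+\sum_{i=2}^{s}\bigl(\bar{u}_{i}-\bar{u}_{i-1}\bigr)+\bigl(\bar{u}_{1}-u(y)\bigr),
\end{equation*}
and apply Cauchy--Schwarz in the finite sum to pick up a single factor of $s+1$.

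Next I would integrate $|u(x)-u(y)|^{2}$ over $F_{j}\times Y_{k}$ and treat each of the three pieces separately. For the boundary piece I would use a trace--Poincaré inequality on the simplex $Y_{l_{s}}$, giving $\int_{F_{j}}|u(x)-\bar{u}_{s}|^{2}\,ds_x\leq C\eta\,\TwoNorm{\nabla u}{Y_{l_s}}^{2}$, multiplied by $|Y_{k}|\approx\eta^{d}$; for the volume piece, Lemma \ref{simplicial} directly gives $\int_{Y_{k}}|u(y)-\bar{u}_{1}|^{2}dy\leq C\eta^{2}\TwoNorm{\nabla u}{Y_{k}}^{2}$, multiplied by $|F_{j}|\approx\eta^{d-1}$. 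In both cases I obtain a bound of order $\eta^{d+1}\TwoNorm{\nabla u}{Y_{l_{s}}\cup Y_{k}}^{2}$.

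The middle telescoping piece is handled by inserting, for each $i$, the mean $\bar{u}^{X_{i-1}}$ on the shared facet $X_{i-1}\subset\overline{Y}_{l_{i-1}}\cap\overline{Y}_{l_{i}}$ and using the triangle inequality:
\begin{equation*}
|\bar{u}_{i}-\bar{u}_{i-1}|^{2}\leq \frac{2}{|Y_{l_{i}}|}\TwoNorm{u-\bar{u}^{X_{i-1}}}{Y_{l_{i}}}^{2}+\frac{2}{|Y_{l_{i-1}}|}\TwoNorm{u-\bar{u}^{X_{i-1}}}{Y_{l_{i-1}}}^{2}.
\end{equation*}
Lemma \ref{simplicial} applied to each simplex, together with quasi-uniformity and shape regularity (so that $|Y_{l_{i}}|\gtrsim\eta^{d}$ and $\mathrm{diam}(Y_{l_i})\lesssim\eta$), yields $|\bar{u}_{i}-\bar{u}_{i-1}|^{2}\leq C\eta^{2-d}\TwoNorm{\nabla u}{Y_{l_{i}}\cup Y_{l_{i-1}}}^{2}$. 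Summing over $i$ and multiplying by the prefactor $|F_{j}||Y_{k}|\approx\eta^{2d-1}$ from the double integral produces a contribution bounded by $Cs\,\eta^{d+1}\TwoNorm{\nabla u}{P_{k,j}}^{2}$, with the overlaps between consecutive simplex pairs absorbed into the benign constant.

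Combining the three contributions and using $s+1\leq 2s$ gives the claimed estimate \eqref{integralesteq}. The main technical obstacle is the initial trace estimate on $F_{j}$, which must be done by first invoking a scaled trace inequality on the reference simplex and then Poincaré on $Y_{l_{s}}$; everything else is bookkeeping of powers of $\eta$ via shape regularity and quasi-uniformity, exactly as in the proof of Lemma \ref{meth1}.
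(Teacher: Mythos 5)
Your proof is correct, and it arrives at the right powers of $\eta$ and the right single factor of $s_{k,j}$. The route differs from the paper's in one organizational respect: the paper splits $u(x)-u(y)$ only at the face average $\bar{u}^{F_j}$, which turns the double integral into $|F_j|\TwoNorm{u-\bar{u}^{F_j}}{Y_k}^2+|Y_k|\TwoNorm{u-\bar{u}^{F_j}}{F_j}^2$; the first term is then dispatched in one line by invoking Lemma \ref{lemmacx} with $X^*=F_j$ (which already contains the whole telescoping-along-the-path argument and carries the factor $s_{k,j}$), and the second by the same scaled trace plus Poincar\'{e} bound $\TwoNorm{u-\bar{u}^{F_j}}{F_j}^2\leq C\eta\TwoNorm{\nabla u}{Y_j}^2$ that you identify as the main technical point. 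You instead unfold the telescoping from scratch through the cell means $\bar{u}_i$, pick up the factor $s+1$ explicitly via Cauchy--Schwarz, and control each increment $|\bar{u}_i-\bar{u}_{i-1}|$ by inserting the shared facet average and applying Lemma \ref{simplicial} twice. Both arguments are sound; the paper's is shorter because it reuses Lemma \ref{lemmacx} as a black box, while yours is self-contained and makes the origin of the factor $s_{k,j}$ (the Cauchy--Schwarz over $s+1$ telescoping terms, with the gradient sums over overlapping consecutive pairs absorbed into $C$) completely explicit. One small bookkeeping remark: in your middle term the factor of $s$ comes from the Cauchy--Schwarz prefactor $(s+1)$, not from the sum over $i$ itself (that sum collapses to $C\eta^{2-d}\TwoNorm{\nabla u}{P_{k,j}}^2$ since each simplex occurs in at most two consecutive pairs); stating this explicitly would make the final assembly airtight.
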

\begin{proof} We proceed as in \cite{Pechstein01042013},
note that 
\begin{align*}
\int_{F_j}\int_{Y_{k}}|u(x)-u(y)|^2 dy ds_x& \leq\int_{F_j}\int_{Y_{k}}|u(x)-\bar{u}^{F_j}|^2 dy ds_x+\int_{F_j}\int_{Y_{k}}|\bar{u}^{F_{j}}-u(y)|^2 dy ds_x\\
&\leq |F_{j}|\TwoNorm{u-\bar{u}^{F_j}}{Y_k}^2+|Y_{k}|\TwoNorm{u-\bar{u}^{F_j}}{F_j}^2.
\end{align*}
Using Lemma \ref{lemmacx} with $X^*=F_{j}$ and shape regularity we have 
$$
\TwoNorm{u-\bar{u}^{F_j}}{Y_k}^2\leq C s_{k,j}\frac{|Y_{k}|}{\eta^{d-2}} \TwoNorm{\nabla u}{P_{k,j}}^2,
$$
and by a transformation argument we have 
$$
\TwoNorm{u-\bar{u}^{F_j}}{F_j}^2\leq C \eta \TwoNorm{\nabla u}{Y_{j}}^2,
$$
Thus, we have 
\begin{align*}
\int_{F_j}\int_{Y_{k}}|u(x)-u(y)|^2 dy ds_x&\leq |F_{j}|\TwoNorm{u-\bar{u}^{F_j}}{Y_k}^2+|Y_{k}|\TwoNorm{u-\bar{u}^{F_j}}{F_j}^2\\
&\leq  C s_{k,j}\frac{ |F_{j}|  |Y_{k}|}{\eta^{d-2}} \TwoNorm{\nabla u}{P_{k,j}}^2+C|Y_{k}| \eta \TwoNorm{\nabla u}{Y_{j}}^2\\
&\leq  C (s_{k,j}\frac{\eta^{2d-1} }{\eta^{d-2}}+ \eta^{d+1}) \TwoNorm{\nabla u}{P_{k,j}}^2\leq C s_{k,j} \eta^{d+1}\TwoNorm{\nabla u}{P_{k,j}}^2,
\end{align*}
here we used that $|Y_k|\approx \eta^d$ and $|F_{j}|\approx \eta^{d-1}$.
\qed
\end{proof}
Now we are able to obtain an alternative estimate approach compared to Lemma \ref{meth1}.
\begin{lemma}\label{lemmaest2}
Assuming the assumptions of Lemma \ref{integralest}, we let $X^*=\cup_{j=1}^J F_{j}$ and path $P_{k,j}$. Then, we have the estimate 
\begin{align}
C_{P}^2\leq C \frac{s_{max} r_{max} \eta^{d+1}}{|X^*|H^2},
\end{align}
here $s_{max}=\max(s_{k,j})$ and 
$$
r_{max}=\max_{i\in {\cal I}}  |\{(k,j)\in {\cal I}\times {\cal J}: Y_{i}\subset P_{k,j}\}   |,
$$
is the maximal number of times any  simplices $Y_i$ is in a path.
\end{lemma}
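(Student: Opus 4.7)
The plan is to bound $\|u-\bar u^{X^*}\|_{L^2(\wt\omega_x)}^2$ directly from the $L^2$ double-integral estimate of Lemma \ref{integralest}, transferring each piece $Y_k$ to the averaging surface $X^*=\bigcup_j F_j$ and then reorganizing the resulting sum over paths by the multiplicity count $r_{max}$. First I would write the deviation from the mean on $X^*$ as a double average: since $X^* = \bigcup_{j=1}^J F_j$ with essentially disjoint faces, $\bar u^{X^*}=\frac{1}{|X^*|}\sum_{j}\int_{F_j}u\,ds$, so for $y\in Y_k$,
\begin{equation*}
u(y)-\bar u^{X^*} \;=\; \frac{1}{|X^*|}\sum_{j=1}^J\int_{F_j}\bigl(u(y)-u(x)\bigr)ds_x.
\end{equation*}
An application of Cauchy--Schwarz in the $j$-sum and on each $F_j$ gives $|u(y)-\bar u^{X^*}|^2\le \frac{1}{|X^*|}\sum_j\int_{F_j}|u(y)-u(x)|^2 ds_x$.

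Next I would integrate over $Y_k$ and apply Fubini together with Lemma \ref{integralest} to every pair $(k,j)$, yielding
\begin{equation*}
\TwoNorm{u-\bar u^{X^*}}{Y_k}^2 \;\le\; \frac{C\eta^{d+1}}{|X^*|}\sum_{j=1}^J s_{k,j}\,\TwoNorm{\nabla u}{P_{k,j}}^2.
\end{equation*}
Summing over $k$ and pulling out $s_{max}$, the double sum $\sum_{k,j}\TwoNorm{\nabla u}{P_{k,j}}^2$ must then be reorganized by swapping the order of summation so it is indexed by the simplex $Y_i$ appearing on a path rather than by the path endpoints: each $\TwoNorm{\nabla u}{Y_i}^2$ is counted once for every pair $(k,j)$ such that $Y_i\subset P_{k,j}$, and by definition this multiplicity is at most $r_{max}$. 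This bookkeeping is the only subtle step; it gives
\begin{equation*}
\sum_{k,j}\TwoNorm{\nabla u}{P_{k,j}}^2 \;\le\; r_{max}\TwoNorm{\nabla u}{\wt\omega_x}^2.
\end{equation*}

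Combining these two estimates produces
\begin{equation*}
\TwoNorm{u-\bar u^{X^*}}{\wt\omega_x}^2 \;\le\; \frac{C\,s_{max}\,r_{max}\,\eta^{d+1}}{|X^*|}\,\TwoNorm{\nabla u}{\wt\omega_x}^2,
\end{equation*}
and since by \eqref{pointotal} the best constant for the mean-zero inequality on $\wt\omega_x$ is bounded by the analogous constant using $\bar u^{X^*}$ (the mean minimizes), we divide through by $H^2$ and read off $C_P^2\le C\,s_{max}\,r_{max}\,\eta^{d+1}/(|X^*|H^2)$, as claimed. The main obstacle is really only the index-swap argument and making sure the Cauchy--Schwarz step does not lose a factor of $J$; everything else is a direct insertion of Lemma \ref{integralest} and the observation that $\bar u^{X^*}$ is a convex combination of the face averages $\bar u^{F_j}$ weighted by $|F_j|/|X^*|$.
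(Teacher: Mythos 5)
Your proposal is correct and follows essentially the same route as the paper: both reduce to the bound $|X^*|\,\TwoNorm{u-\bar u^{X^*}}{\wt\omega_x}^2\le\int_{X^*}\int_{\wt\omega_x}|u(z)-u(y)|^2\,dy\,ds_z$ (the paper via the algebraic expansion of $(u(z)-u(y))^2$ after normalizing $\bar u^{X^*}=0$, you via Jensen/Cauchy--Schwarz on the average representation — an equivalent step), then apply Lemma \ref{integralest} pairwise and reorganize the sum over paths using $s_{max}$ and the multiplicity $r_{max}$ exactly as the paper does.
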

\begin{proof}
Without loss of generality we suppose for $u\in H^1(\wt{\omega}_{x})$ that $\bar{u}^{X^*}=0$. Using the identity, $(u(z)-u(y))^2=u(z)^2-2u(z)u(y)+u(y)^2$   for $z \in X^*$ (note here $z$ is a $d-1$ dimensional variable) and integrating we have 
$$
\int_{X^*}\int_{\wt{\omega}_{x}}(u(z)-u(y))^2dyds_z=|\wt{\omega}_{x}|\TwoNorm{u}{X^*}^2-2\int_{X^*}\int_{\wt{\omega}_{x}}u(z)u(y)dyds_z+|X^*|\TwoNorm{u}{\wt{\omega}_{x}}^2.
$$
The middle term vanishes and we have 
$$
|X^*|\TwoNorm{u}{\wt{\omega}_{x}}^2\leq \int_{X^*}\int_{\wt{\omega}_{x}}(u(z)-u(y))^2dyds_z=\sum_{k\in {\cal I}}\sum_{j\in {\cal J}}\int_{F_{j}}\int_{Y_k}(u(z)-u(y))^2dyds_z,
$$
Using Lemma \ref{integralest}, we have 
\begin{align*}
|X^*|\TwoNorm{u}{\wt{\omega}_{x}}^2&\leq \sum_{k\in {\cal I}}\sum_{j\in {\cal J}}\int_{F_{j}}\int_{Y_k}(u(z)-u(y))^2dyds_z\\
&\leq C\sum_{k\in {\cal I}}\sum_{j\in {\cal J}}   s_{k,j} \eta^{d+1}\TwoNorm{\nabla u}{P_{k,j}}^2\\
&\leq C s_{max} \eta^{d+1}\sum_{i\in {\cal I}}  \big|\{(k,j)\in {\cal I}\times {\cal J}: Y_{i}\subset P_{k,j}\}  \big|  \TwoNorm{\nabla u}{Y_{i}}^2\\
&\leq C s_{max} r_{max} \eta^{d+1} \TwoNorm{\nabla u}{\wt{\omega}_{x}}^2.
\end{align*}
Dividing by $|X^*|$ completes the argument.
\qed
\end{proof}

With the above lemmas we are able to obtain uniform bounds for isolated perforations. To illuminate the ideas needed to obtain such bounds, we propose a simple example. 
Again, as before, we suppose we have ${\omega}_{x}=[0,H]^2$.
 For simplicity of the exposition, we define the perforation solids as periodic square domains. More specifically we define the solid domain to be 
 $$S_\eta=\bigcup_{j=0}^{N_{\eta}}\bigcup_{i=0}^{N_{\eta}}\left([(2j+1)\eta,(2j+2)\eta] \times [(2i+1)\eta,(2i+2)\eta] \right),$$
here $N_{\eta}$ is chosen so that the top and right boundaries of ${\omega}_{x}$ are not intersected. We let $\{Y_{l}\}_{l=1}^n$ be a quasi-uniform and shape regular partition of $\wt{\omega}_{x}={\omega}_{x}\backslash S_{\eta}$ with mesh size $\eta>0$. We let $X^*=\{H\}\times [0,H]=\cup_{j=1}^{J}\bar{F}_j$ be the right boundary and $F_{j}$ the faces of  some elements in the partition. Thus, we  have $|X^*|=H$. Since the  particles are size $\eta$ and the minimal spacing is size $\eta$ there always exists a path from $Y_{k}\in \{Y_{l}\}_{l=1}^n$ to some face $F_j$ for some $j\in {\cal J}$ such that $s_{k,j}\leq C( \frac{H}{\eta}).$ In addition, it is also easy to see that $r_{max}\leq C(\frac{H}{\eta})^2$. Using the estimate in Lemma \ref{lemmaest2} we have 
\begin{align}\label{uniformbound}
C_{P}^2\leq C \frac{H}{\eta}  \frac{H^{2}}{\eta^2}   \frac{\eta^{3}}{H^3}\leq C,
\end{align}
or a uniform bound on the Poincar\'{e} constant for these isolated particles.
\begin{remark}
We note here that the periodicity of the particles is not at all essential on the above bound. Also the shape may be easily extended to convex and shape regular (i.e. not too oblique) isolated particles. The key parts being the ability to construct path lengths of order $s_{max}\leq C( \frac{H}{\eta}).$ We summarize this fact in a Corollary.
\end{remark}

\begin{corollary}\label{coruniform}
Suppose we have a collection  $S_{\eta}$ of isolated convex shape-regular particles with characteristic size and distance  $\eta>0$. Moreover,  suppose that $s_{max}\leq C( \frac{H}{\eta})$ and  $r_{max}\leq C(\frac{H}{\eta})^2$ for some quasi-uniform and shape regular partition $\{Y_{l}\}_{l=1}^{n}$,  with mesh size $\eta>0$ of $\wt{\omega}_{x}={\omega}_{x}\backslash S_{\eta}$. Then, the Poincar\'{e} constant is uniformly bounded
\begin{align}\label{coruniformbound}
C_{P}\leq C,
\end{align}
where $C$ does not depend on $H$ or $\eta$.
\end{corollary}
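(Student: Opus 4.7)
The plan is to apply Lemma \ref{lemmaest2} directly, reusing the algebra already performed in the isolated-square computation \eqref{uniformbound}. The corollary is stated precisely so that the two geometric hypotheses on $s_{max}$ and $r_{max}$ come for free; what remains is essentially a substitution.

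First I would choose a reference manifold $X^*\subset\partial \wt{\omega}_x$ with $|X^*|\geq c H$ that is a union of simplicial facets $F_1,\dots,F_J$ of the partition $\{Y_l\}_{l=1}^n$ and that meets no particle. Such an $X^*$ always exists: on patches adjacent to $\partial\Omega$, a piece of $\partial\Omega\cap\partial\wt{\omega}_x$ works, since by assumption the perforations do not intersect $\partial\Omega$; on interior patches, a flat subset of $\partial\omega_x$ that avoids $S_\eta$ does, because the inclusions are isolated and of size $\eta\ll H$. This gives $J\approx (H/\eta)^{d-1}$ facets of diameter $\eta$, and the integer $X^*$ appearing in Lemma \ref{integralest} is well defined.

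Next I would substitute the assumed bounds into the conclusion of Lemma \ref{lemmaest2}. Taking $d=2$ as in the motivating example, and using $|X^*|\geq cH$, $s_{max}\leq C(H/\eta)$, $r_{max}\leq C(H/\eta)^2$, $\eta^{d+1}=\eta^3$, one computes
\begin{align*}
C_{P}^2\;\leq\; C\,\frac{s_{max}\,r_{max}\,\eta^{d+1}}{|X^*|\,H^2}\;\leq\; C\,\frac{(H/\eta)\,(H/\eta)^{2}\,\eta^{3}}{H\cdot H^{2}}\;=\;C,
\end{align*}
with $C$ absorbing the shape-regularity constant $C^{\cal Y}_{reg}$ and the quasi-uniformity ratio $\eta/\eta_{min}$ but independent of $H$ and $\eta$. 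This is exactly \eqref{coruniformbound}, so taking square roots finishes the argument.

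The real obstacle is not the computation above but the verification of the two hypotheses: one must argue that, for a configuration of isolated convex shape-regular particles, simplicial paths of length $O(H/\eta)$ genuinely reach $X^*$ from every $Y_k$, and that no single simplex is used in more than $O((H/\eta)^2)$ such source--target paths. The geometric intuition, already flagged in the remark preceding the corollary, is that isolated convex inclusions do not force a path to wind around an obstacle (in sharp contrast to the filamented example giving \eqref{poinest2}), so an almost-straight corridor of width $\sim\eta$ always exists between any simplex and some face of $X^*$; the multiplicity bound then follows from a pigeonhole count over the $n\approx (H/\eta)^d$ simplices and the $J\approx (H/\eta)^{d-1}$ target facets. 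Making these two statements rigorous for a general convex and shape-regular family of inclusions is the only nontrivial step, but for the corollary as stated it is part of the hypothesis.
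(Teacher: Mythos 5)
Your proposal is correct and follows essentially the same route as the paper: the corollary is proved by substituting the hypothesized bounds $s_{max}\leq C(H/\eta)$, $r_{max}\leq C(H/\eta)^2$, and $|X^*|\approx H$ into Lemma \ref{lemmaest2}, exactly reproducing the computation in \eqref{uniformbound}. You also correctly observe that verifying the path-length and multiplicity bounds for a general isolated convex configuration is deliberately placed in the hypotheses rather than the proof, which is how the paper treats it as well.
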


Finally, applying this estimate to Theorem \ref{errorlocal} we have 
\begin{align}
\TwoNorm{\nabla u-\nabla u_{H,m}^{ms}}{\perf}\leq C\left(H+ m^{\frac{d}{2}}  \theta^{m  } \right) \TwoNorm{g}{\perf},
\end{align}
and recover the standard estimate as in \cite{MP11} independent of the small structures $\eta$. Note here that the above $\theta$ is also independent of $H$ and $\eta.$

\section{Numerical Examples}
In this section we will present a two numerical examples. We apply our algorithm to \eqref{perfLaplace} using our multiscale method and compare with standard $\mathbb{P}_1$ finite elements. Using a penalization method, we will  implement the micro-structural features into the domain.
We will do this for two relevant examples. The first being a periodic square domain with square particles, and the second an dumbbell-shaped domain containing the microstructure of the first experiment. We will demonstrate the validity of  our estimates based on varying patch size (truncation of the localization) and by varying microstructure lengths $\eta$. When we vary the microstructure lengths we will also fix our truncation patch parameter $k$ to be proportional to $\text{log}(H).$

We begin by describing the geometry of the domains. First, we take our unperforated domain to be $\Omega=[0,1]^2$ and define the unit cell to be $Y=[0,1]^2\backslash[\frac{1}{4},\frac{3}{4}]^2$. We define the perforated domain to be
\begin{align}\label{periodicdomain}
\perf^\eta=\bigcup_{k\in \mathbb{Z}^2}\left(\eta (Y+k)\right)\cap \Omega,
\end{align}
where $\eta$ is chosen so that the domain is periodically tessellated. This domain for $\eta=\frac{1}{8}$ can be seen in Figure \ref{geom1}.

  \begin{figure}[!ht]
  \centering
      \includegraphics[width=0.5\textwidth]{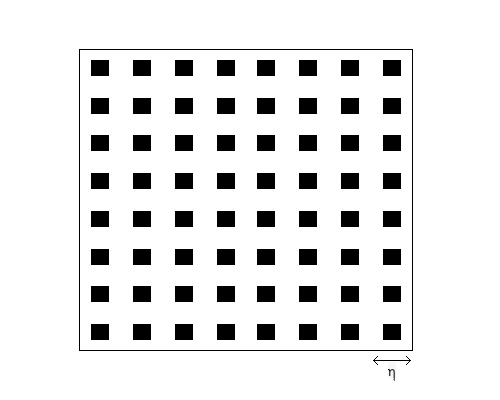}
       \caption{Square domain with periodic microstructure.}
    \label{geom1}
  \end{figure}

Since this geometry will clearly be in the same class as the uniform bound estimate \eqref{coruniformbound}, we choose our second geometry to be an dumbbell-shaped domain. As noted in \cite{Pechstein01042013}, such a  shaped domain has a theoretical bound $C_{P}^2\leq 1+\text{log}(\frac{\text{diam}(\Omega)}{\eta})$. Here $\eta$ is the separation of the narrowest part of the domain. More concretely, we let 
$$\Omega^{H,\eta}=\Omega\backslash\left( \left([\frac{3}{8},\frac{5}{8}]\times [0,\frac{1-\eta}{2}]\right)\cup\left([\frac{3}{8},\frac{5}{8}]\times [\frac{1+\eta}{2},1]\right) \right).$$
 In addition to the $H$ structure we also take out some of the square perforations as in \eqref{periodicdomain} for a fixed period of $\frac{1}{16}$. We define the following domain
\begin{align}\label{Hdomain}
\perf^{H,\eta}=\bigcup_{k\in \mathbb{Z}^2}\left(\frac{1}{16} (Y+k)\right)\cap \Omega^{H,\eta},
\end{align}
this domain can be seen in Figure \ref{geom2}. Note here, the size of the perforations are fixed and the varying quantity is the size of the narrowest part of the domain in the middle strip.

  \begin{figure}[!ht]
  \centering
      \includegraphics[width=0.45\textwidth]{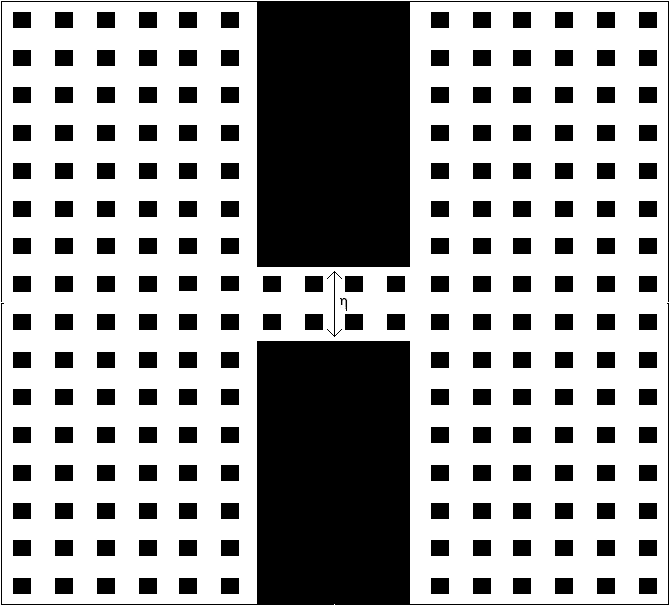}
       \caption{Dumbbell-shaped domain with periodic microstructure}
    \label{geom2}
  \end{figure}

To solve the problems in the porous domains, we will  explicitly grid the perforations on the fine scale, not on the coarse scale. 
A penalization scheme could also be utilized to relax the restrictiveness of gridding the fine scale. 
Note, there is a fine scale $h$ to solve the local problems and we take this value to be $h=2^{-8}$.
For all the following examples we will use the forcing 

\begin{displaymath}
   g(x_{1}, x_{2}) = \left\{
     \begin{array}{lr}
       1 , x_{2}\geq .5\\
       0 , x_{2}< .5.
     \end{array}
   \right.
\end{displaymath} 

In addition to using the projective quasi-interpolation operator \eqref{proj}, we also present results from the Cl\'{e}ment interpolation operator  \cite{MP11},
\begin{align}\label{clementproj}
\Qintp u=\sum_{x\in{\cal N}_{N}} (\wt{{\cal I}}_{H} u)(x){\cal R}  \lambda_{x},
\end{align}
where $(\wt{{\cal I}}_{H} u)(x)=\frac{ \int_{\perf} v \lambda_x dz  }{\int_{\perf} \lambda_x dz}$. Recall, we chose the projective quasi-interpolator only to simplify the proofs, and here we present numerical results to show that, in these cases, good results hold for the Cl\'{e}ment interpolation operator also.

%

  \begin{figure}[!ht]
    \subfloat[Varying $\eta$ for patch size $k\approx\text{log}(H)$. \label{subfig-11}]{%
      \includegraphics[width=0.45\textwidth]{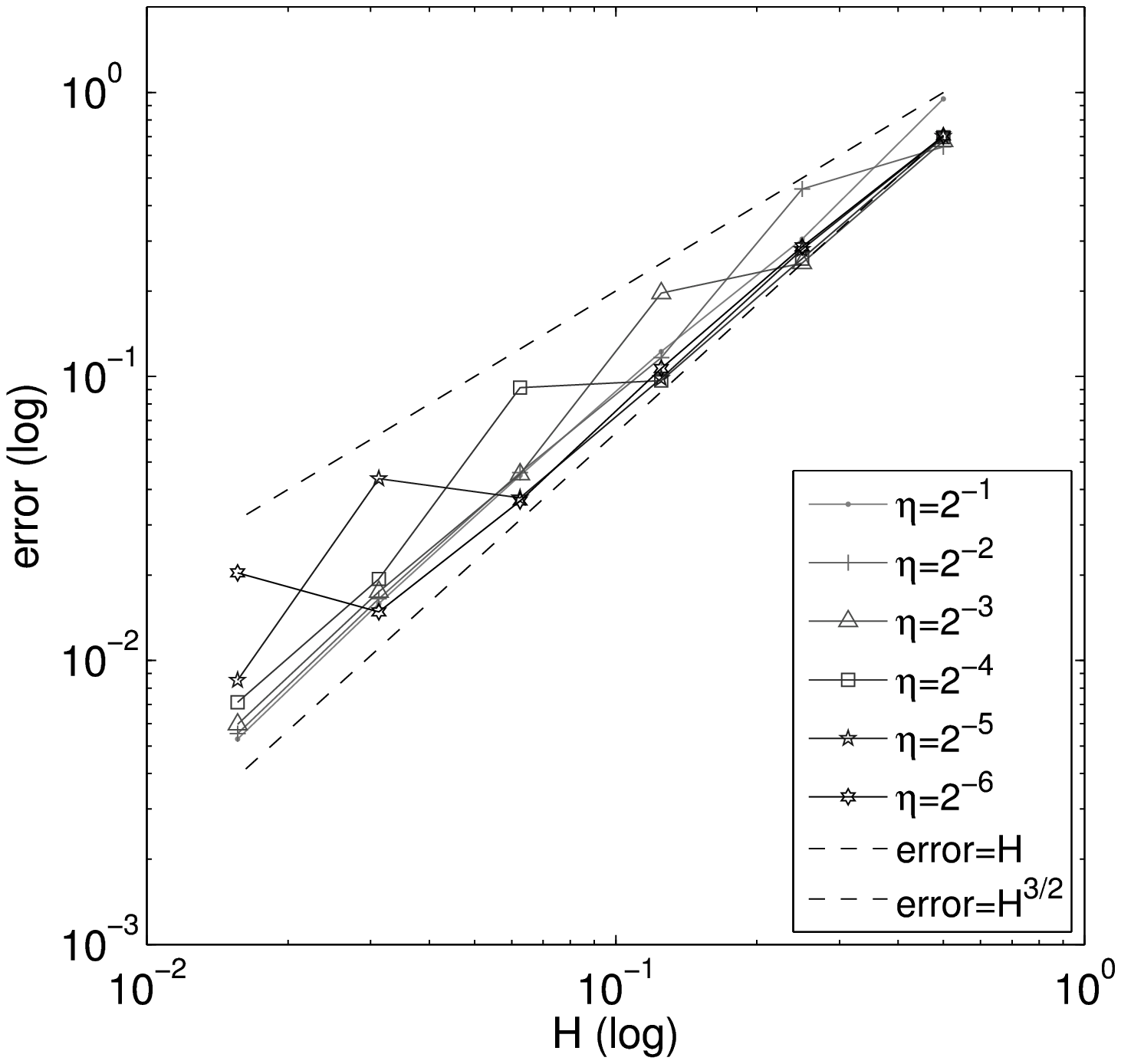}
    }
    \hfill
    \subfloat[Varying patch size $k$ for $\eta=2^{-6}$.\label{subfig-12}]{%
      \includegraphics[width=0.45\textwidth]{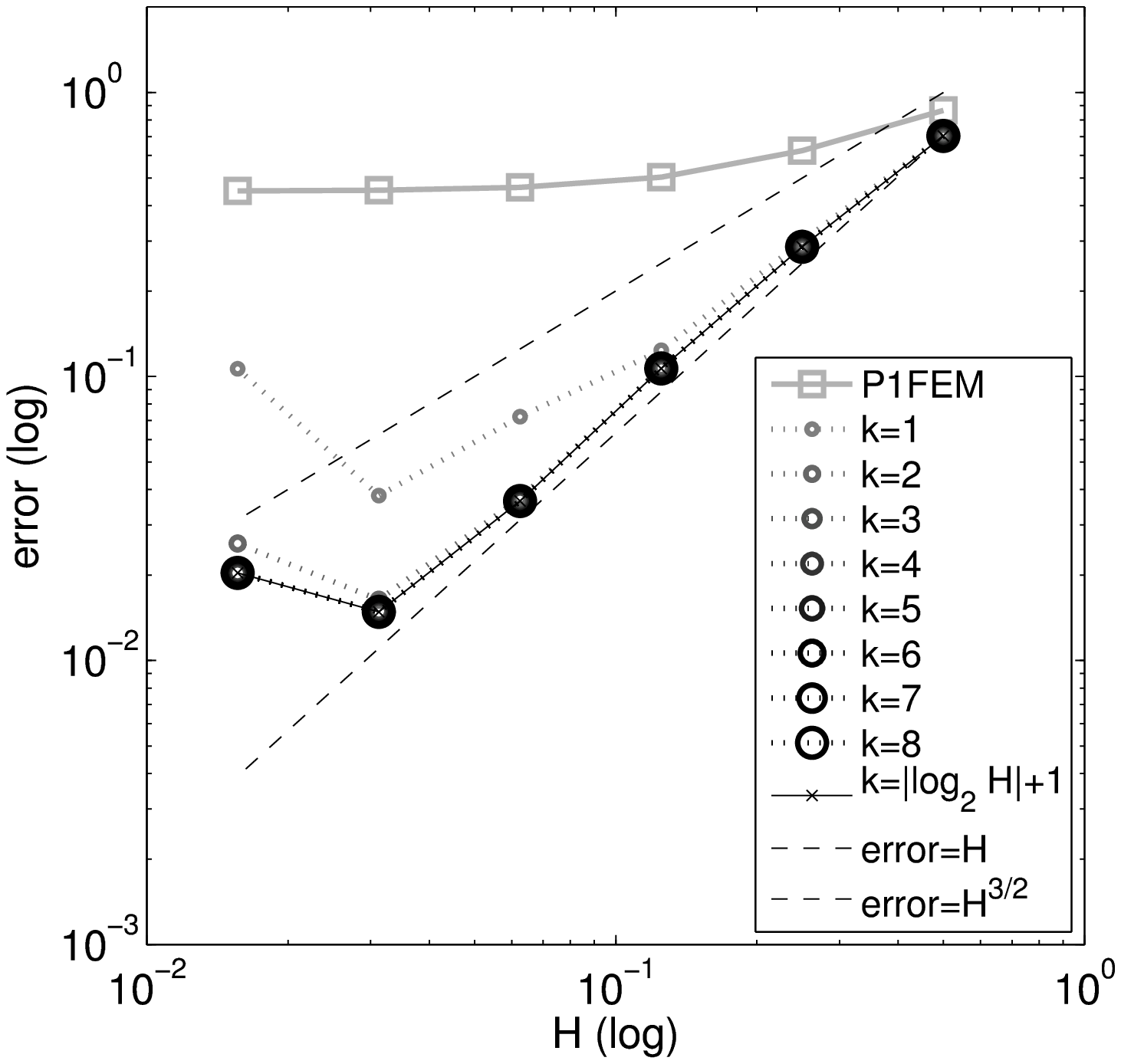}
    }
    \caption{Results for example geometry in Figure \ref{geom1}, using projective quasi-interpolation \eqref{proj}. }
    \label{fig:11}
  \end{figure}

  \begin{figure}[!ht]
    \subfloat[Varying $\eta$ for patch size $k\approx\text{log}(H)$. \label{subfig-13}]{%
      \includegraphics[width=0.45\textwidth]{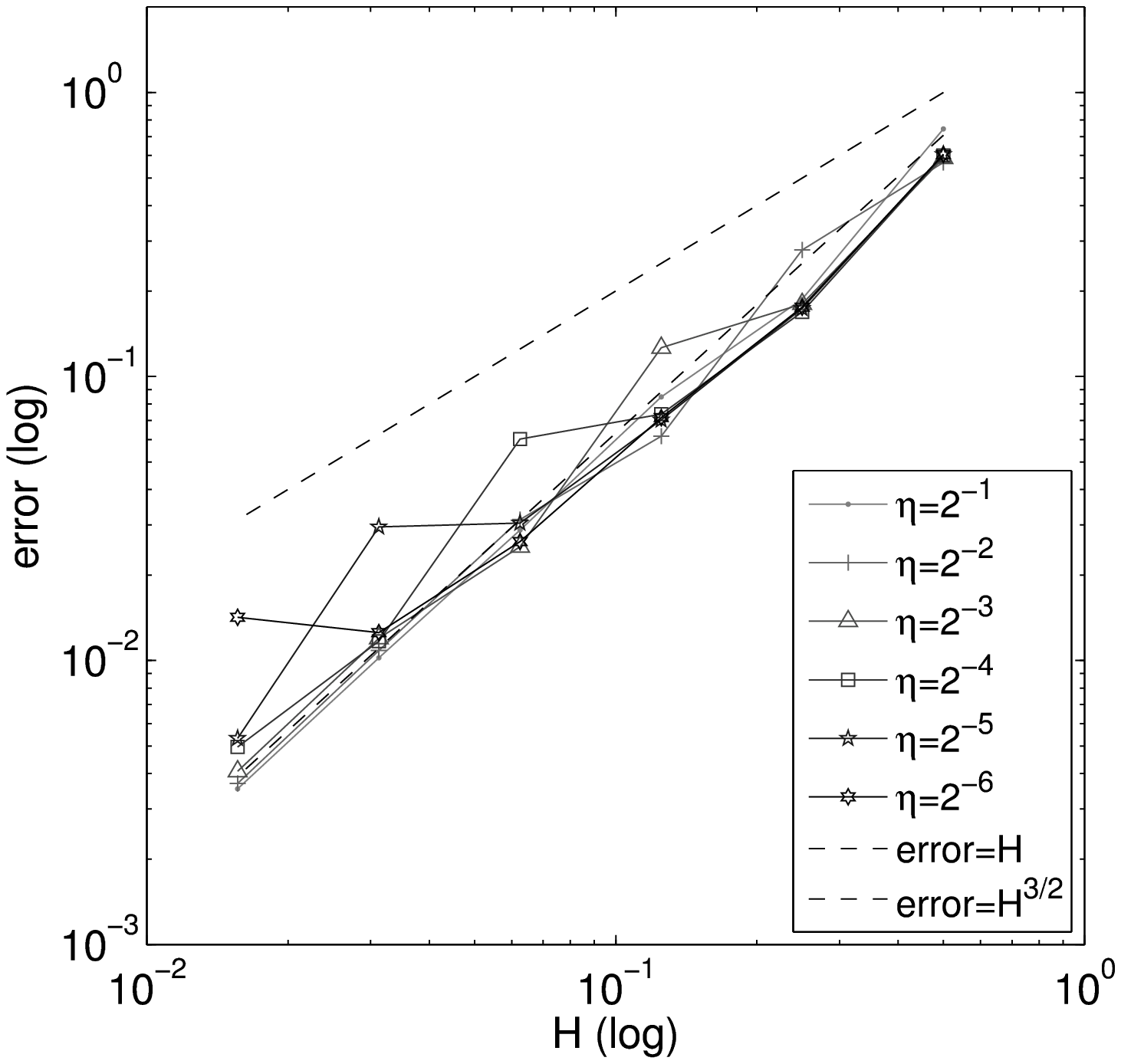}
    }
    \hfill
    \subfloat[Varying patch size $k$ for $\eta=2^{-6}$.\label{subfig-14}]{%
      \includegraphics[width=0.45\textwidth]{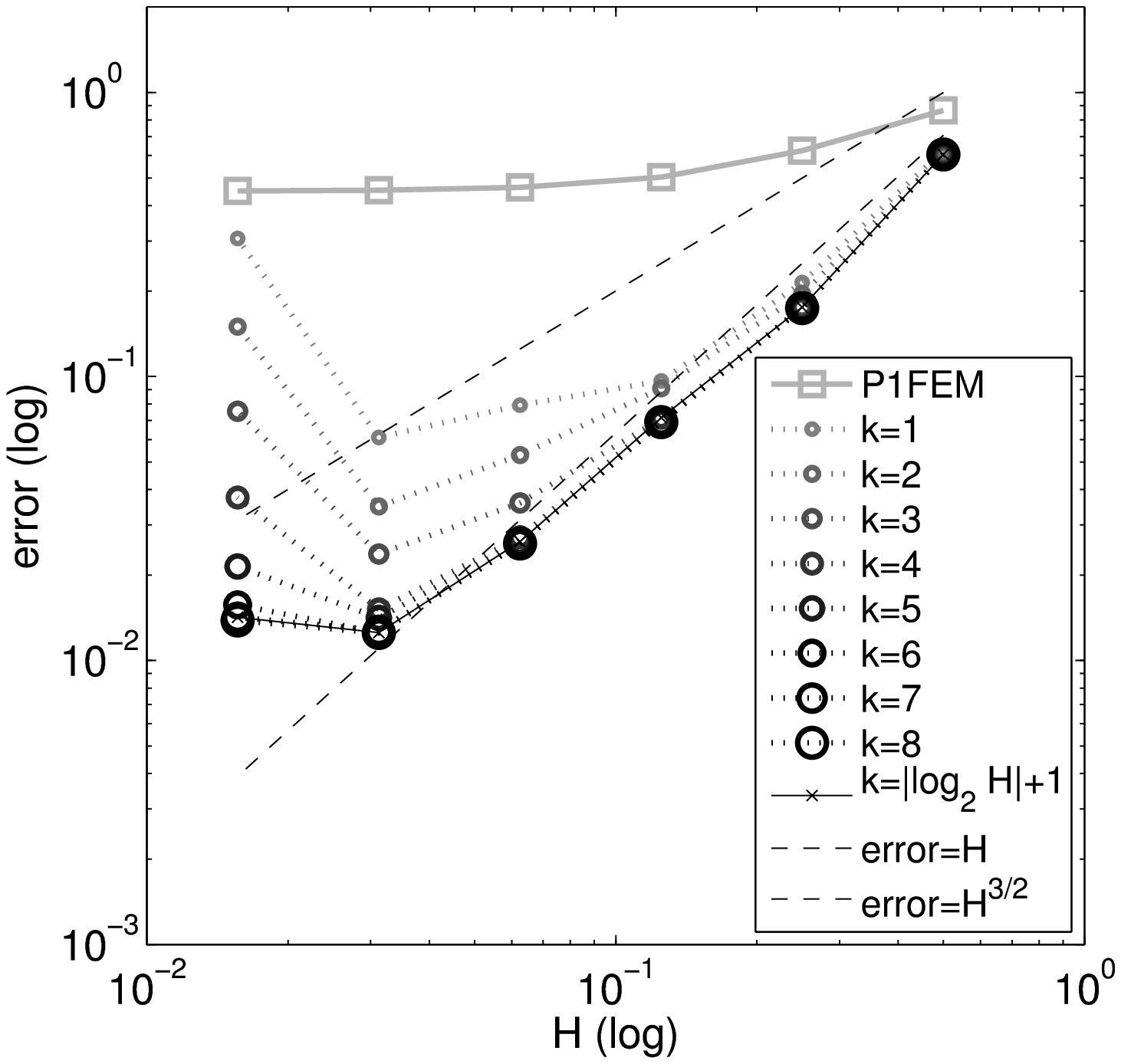}
    }
    \caption{Results for example geometry in Figure \ref{geom1}, using Cl\'{e}ment quasi-interpolation \eqref{clementproj}. }
    \label{fig:12}
  \end{figure}


  \begin{figure}[!ht]
    \subfloat[Varying $\eta$ for patch size $k\approx\text{log}(H)$. \label{subfig-21}]{%
      \includegraphics[width=0.45\textwidth]{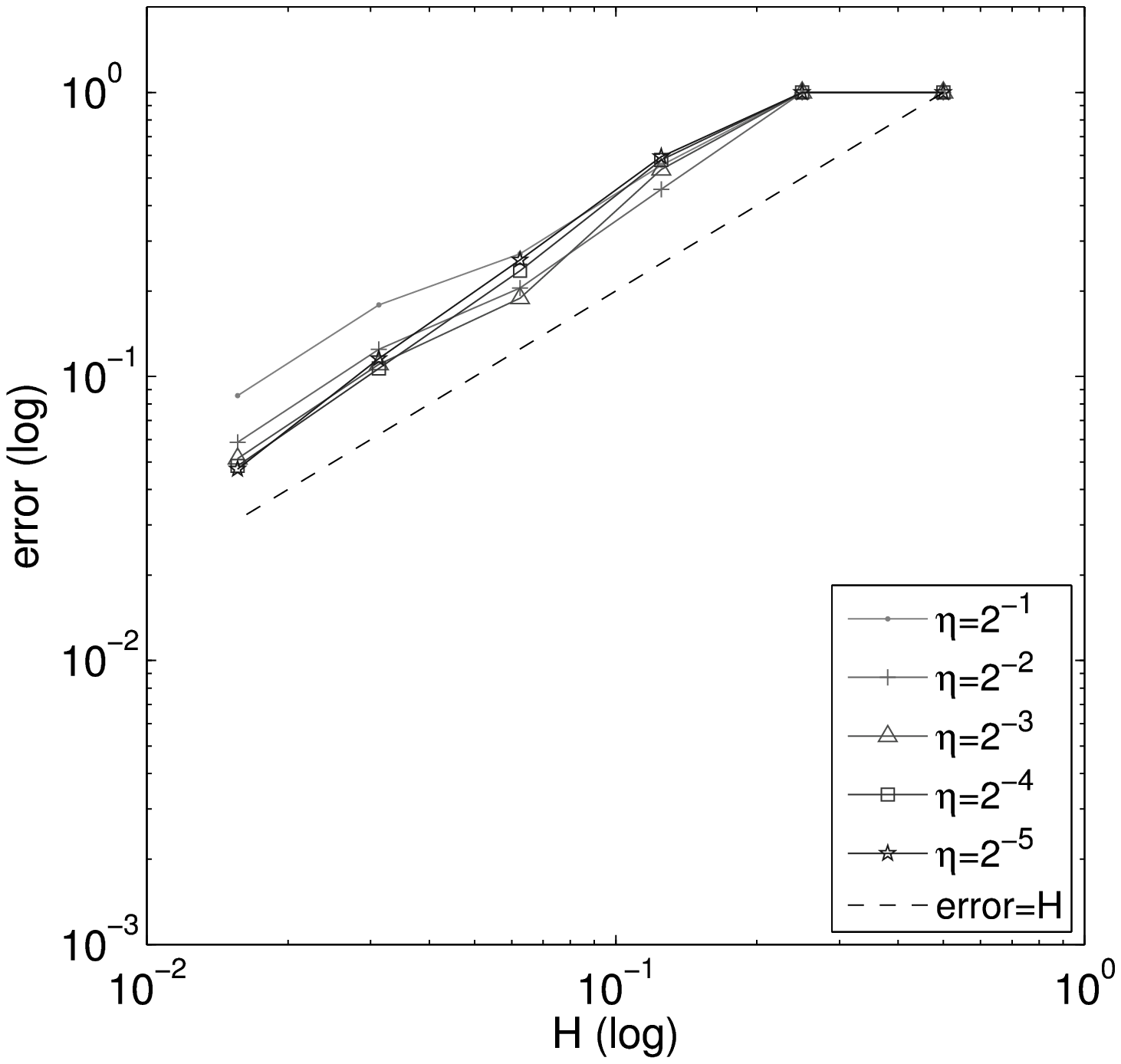}
    }
    \hfill
    \subfloat[Varying patch size $k$ for $\eta=2^{-6}$.\label{subfig-22}]{%
      \includegraphics[width=0.45\textwidth]{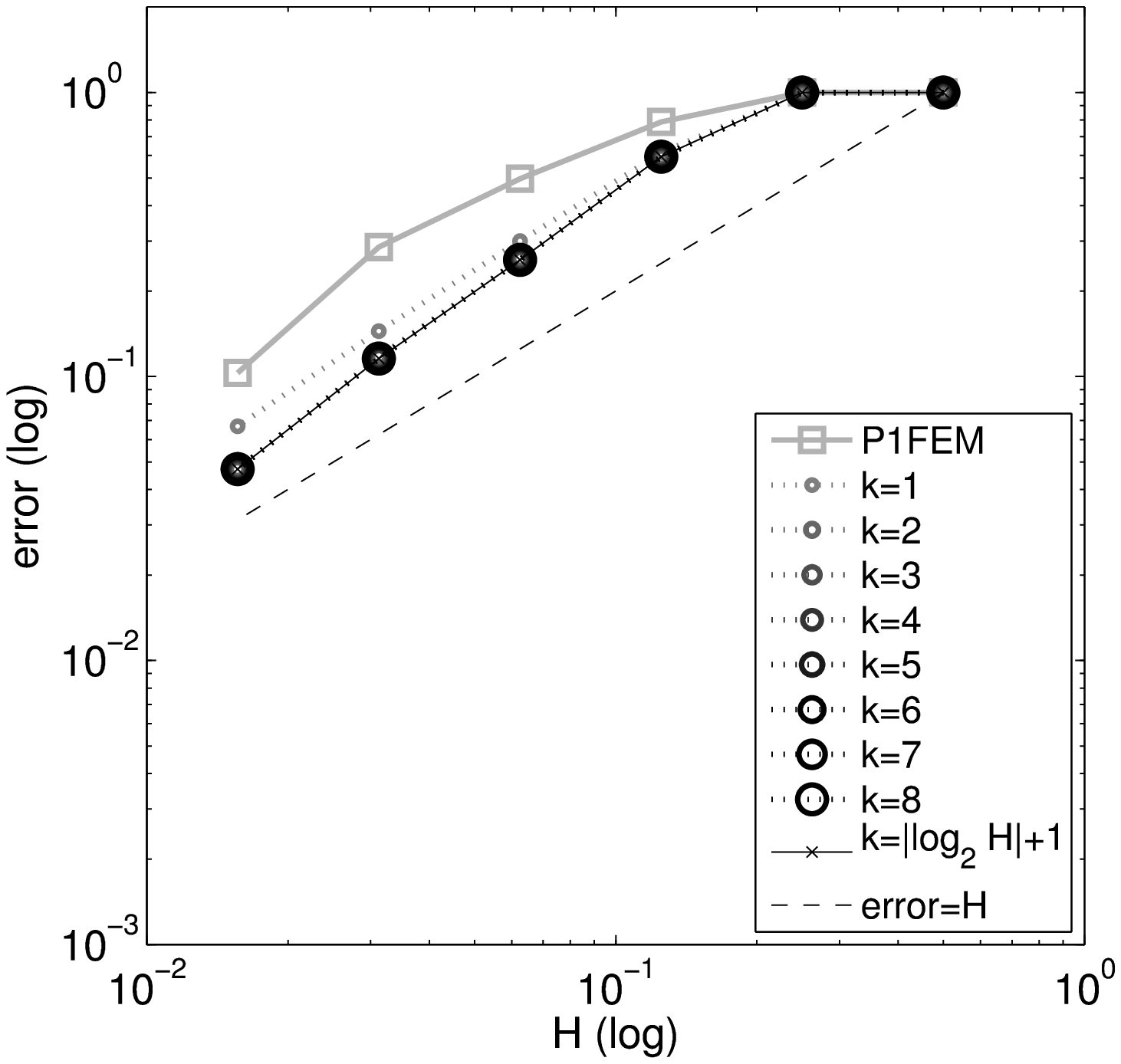}
    }
    \caption{Results for example geometry in Figure \ref{geom2}, using projective quasi-interpolation \eqref{proj}. }
    \label{fig:21}
  \end{figure}

  \begin{figure}[!ht]
    \subfloat[Varying $\eta$ for patch size $k\approx\text{log}(H)$. \label{subfig-23}]{%
      \includegraphics[width=0.45\textwidth]{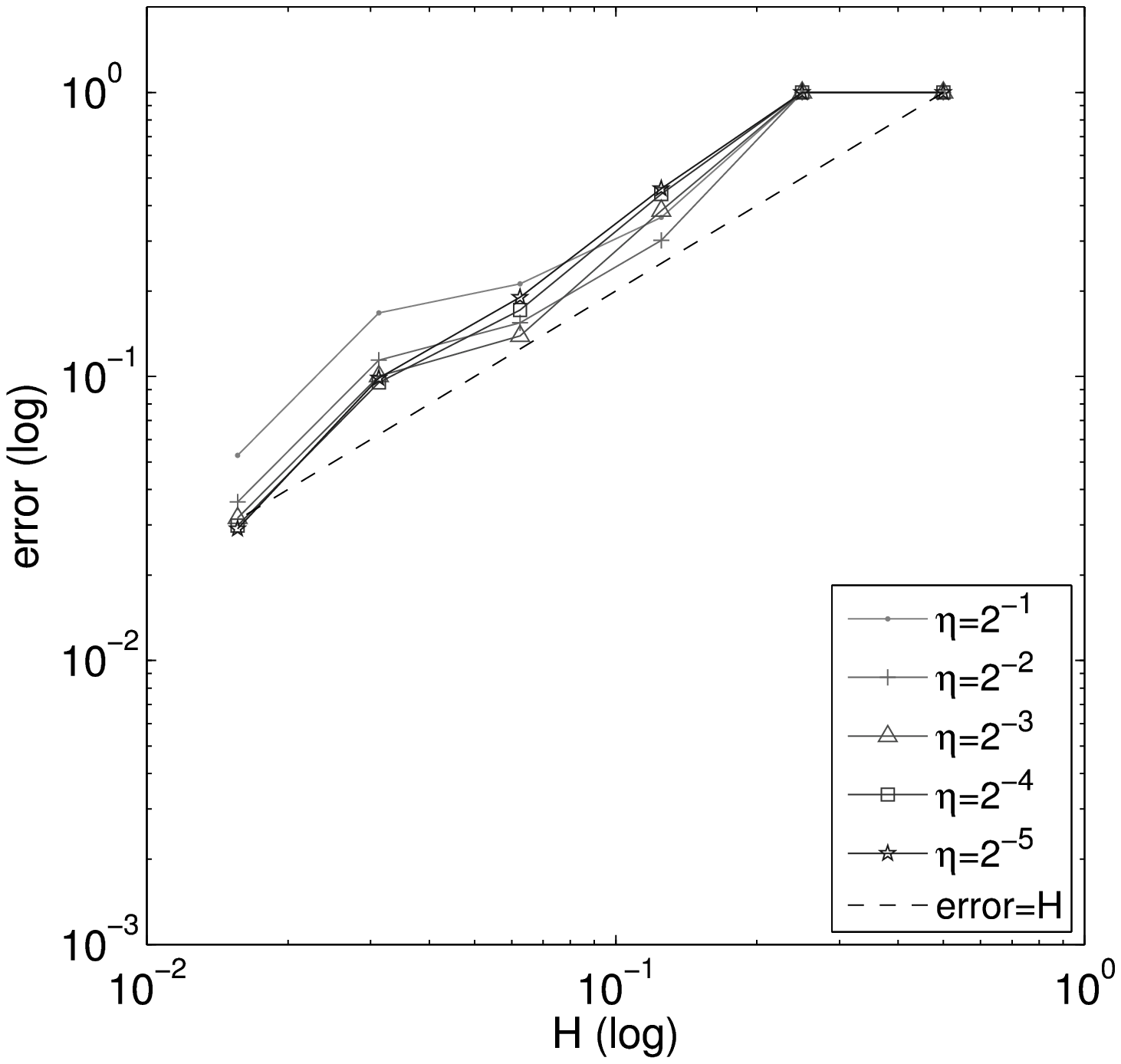}
    }
    \hfill
    \subfloat[Varying patch size $k$ for $\eta=2^{-6}$.\label{subfig-24}]{%
      \includegraphics[width=0.45\textwidth]{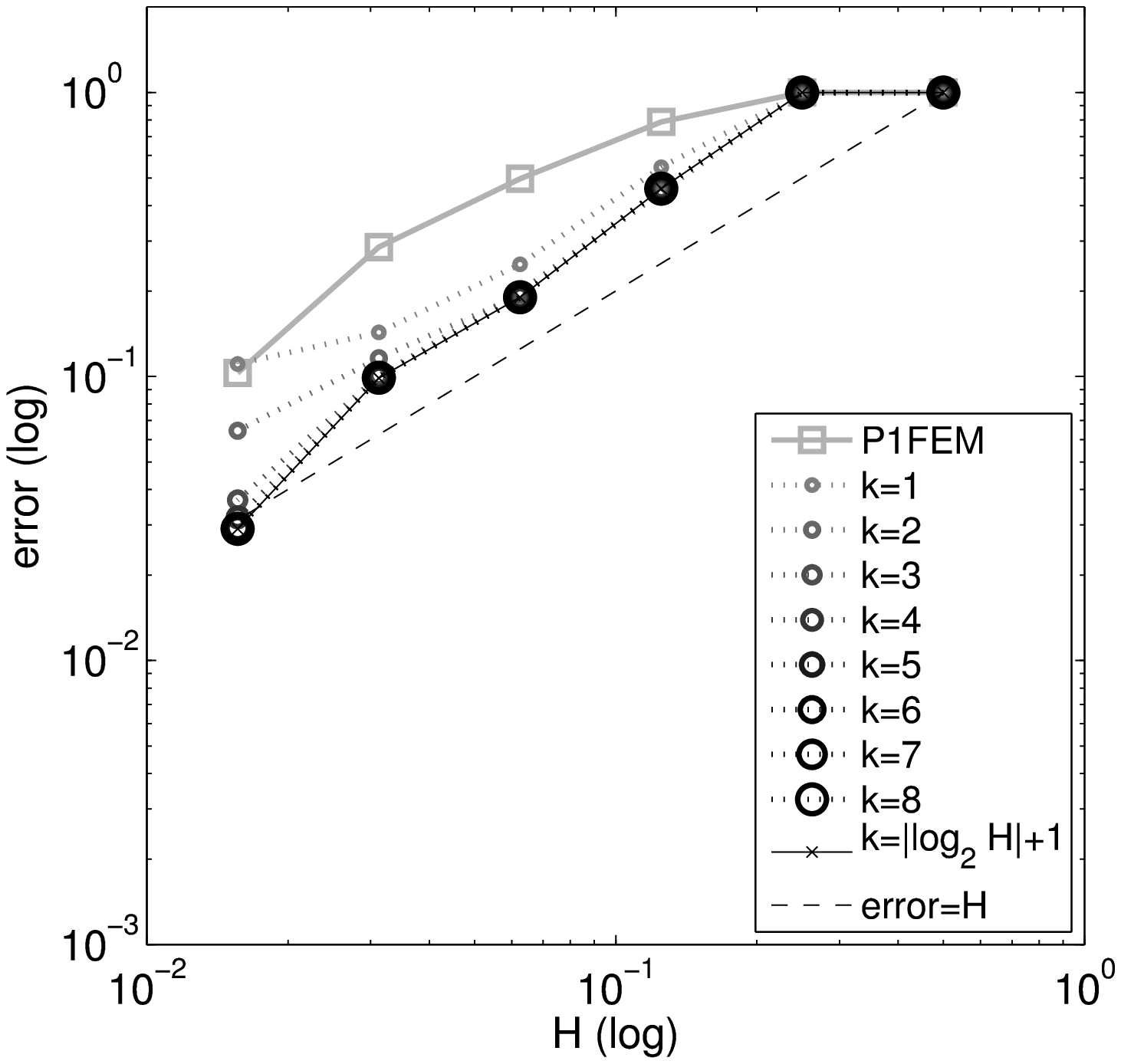}
    }
    \caption{Results for example geometry in Figure \ref{geom2}, using Cl\'{e}ment quasi-interpolation \eqref{clementproj}. }
    \label{fig:22}
  \end{figure}

We present results for both media \eqref{periodicdomain} and \eqref{Hdomain} while using both interpolation operators \eqref{proj} and \eqref{clementproj}. We have two types of numerical tests. First, varying the microstructure parameter $\eta$ while keeping the $k$-patch growth fixed to $\text{log}(H)$. The idea here to see the effect of the error estimates from the possibly error degrading Poincar\'{e} constant. Second, we fix the microstructure length to the smallest value and vary the patch size to observe the rates of exponential convergence. All of these results are compared against an "overkill"   fine-scale solution with $h=2^{-8}$ in the $H^1$ norm.

The results from the geometry \eqref{periodicdomain} are contained in Figure \ref{fig:11} and \ref{fig:12}. In Figure \ref{fig:11}, we use the projective interpolator  \eqref{proj}  and in Figure \ref{fig:12} we use the Cl\'{e}ment interpolator \eqref{clementproj}. Varying the microstructure, in the case period size $\eta$, while fixing the patch extension $k\approx \text{log}(H)$ we plot the results for both interpolators in Figure \ref{subfig-11} and Figure \ref{subfig-13}. In both examples we see that the Poincar\'{e} constant does not effect the estimate negatively in agreement with \eqref{coruniformbound}. In Figure \ref{subfig-12} and Figure \ref{subfig-14} we fix the geometric parameter to the smallest value $\eta=2^{-6}$ and vary the patch size parameter $k$. We note that the slightly more expensive projective quasi-interpolator, as it requires many local $L^2$ projections, performs better in this case at exponential convergence of the patch extensions. 

The results from the geometry \eqref{Hdomain} are contained in Figure \ref{fig:21} and \ref{fig:22}. In Figure \ref{fig:21}, we use the projective interpolator  \eqref{proj}  and in Figure \ref{fig:22} we use the Cl\'{e}ment interpolator \eqref{clementproj}. Keeping the period fixed but varying  $\eta$, the width of the thinnest part, 
and again fixing the patch extension $k\approx \text{log}(H)$ we plot the results for both interpolators in Figure \ref{subfig-21} and Figure \ref{subfig-23}. 
%
%
%
 In Figure \ref{subfig-22} and Figure \ref{subfig-24} we fix the geometric parameter to the smallest value $\eta=2^{-6}$ and vary the patch size parameter $k$. Again we see slightly  better performance  with respect to exponential convergence of the patch extensions for the projective quasi-interpolation. 

\section{Conclusion}

In this work we developed a multiscale procedure to compute Laplacian problems with zero Neumann data in domains with complicated porous microstructure. We are were able to determine the error with respect to the ideal corrector and error due to truncation and localization of the multiscale correctors. As was noted, keeping track of Poincar\'{e} constants was critical in our analysis as they may contain information about the microstructure. We used a constructive procedure to estimate these constants and obtain bounds with respect to $H$ and $\eta$. This procedure was demonstrated on two interesting examples. Finally, we implemented numerical tests to validate our theoretical estimates. We found our numerical experiments were in agreement with the theory and the quasi-interpolator based on local $L^2$ projections to perform slightly better than the Cl\'{e}ment type.

\appendix

\section{Quasi-Interpolation Stability}\label{quasiproof}


We now will prove the stability estimate used throughout for this projective quasi-interpolation operator \eqref{proj}. The proof of this lemma is based on that presented in \cite{OhlbergerNotes}.
\begin{lemma}
  For $u\in \Hperf$, there exists a constant $\local>0$, such that 
   \begin{align}\label{stableproj}
 &H^{-1}\norm{u-\Qintp u}_{L^2(\wt{\omega}_{x,0} ) }+\norm{\nabla (u-\Qintp u)}_{L^2(\wt{\omega}_{x,0})} \leq  \local  \norm{\nabla  u}_{L^2(\wt{\omega}_{x,1} )},
\end{align}
where $\local=C C_{P}$. Here $C_{P}$ is the Poincar\'{e} constant and $C$ is a benign  constant not depending on $H$ or $\eta$. Morever, the interpolation $\Qintp$ is a projection. 
\end{lemma}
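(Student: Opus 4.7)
The plan has four parts: verify the projection property, establish $L^2$-stability of $\Qintp$, use this stability together with the Poincar\'{e} inequality \eqref{poincarelocal} to obtain the approximation bound, and separately handle patches touching $\partial \Omega$.

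\emph{Projection.} For $u_H \in \wt{V}_H$, the restriction $u_H|_{\wt{\omega}_x}$ lies in $\wt{V}_H(\wt{\omega}_x)$ (its boundary values vanish, since $u_H \in H^1_0(\Omega)$), so $\mathcal{P}_x u_H = u_H|_{\wt{\omega}_x}$ and $(\mathcal{P}_x u_H)(x) = u_H(x)$. The nodal identity $u_H = \sum_{y \in \mathcal{N}_H} u_H(y)\, \mathcal{R}\lambda_y$ then gives $\Qintp u_H = u_H$.

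\emph{Local $L^2$-stability.} From $L^2$-stability of $\mathcal{P}_x$ combined with the finite-dimensional norm equivalence $|v_H(x)| \leq C |\wt{\omega}_x|^{-1/2} \|v_H\|_{L^2(\wt{\omega}_x)}$ valid on $\wt{V}_H(\wt{\omega}_x)$, I obtain
\[
|(\mathcal{P}_x u)(x)| \leq C |\wt{\omega}_x|^{-1/2} \|u\|_{L^2(\wt{\omega}_x)}.
\]
Summing over the $O(1)$ basis functions overlapping any $T \subset \wt{\omega}_{x,0}$ yields $\|\Qintp u\|_{L^2(T)} \leq C \|u\|_{L^2(\wt{\omega}_{x,1})}$, and an inverse estimate on $\wt{V}_H$ promotes this to $\|\nabla \Qintp u\|_{L^2(T)} \leq C H^{-1} \|u\|_{L^2(\wt{\omega}_{x,1})}$.

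\emph{Approximation via Poincar\'{e}.} Since $\Qintp$ is a projection,
\[
u - \Qintp u = (u - v_H) - \Qintp (u - v_H)
\]
for every $v_H \in \wt{V}_H$. For patches in the interior of $\Omega$, where $\sum_y \lambda_y \equiv 1$, the constant $v_H = \bar{u}^{\wt{\omega}_{x,1}}$ is admissible, and \eqref{poincarelocal} gives $\|u - v_H\|_{L^2(\wt{\omega}_{x,1})} \leq H C_P \|\nabla u\|_{L^2(\wt{\omega}_{x,1})}$. Applying the stability estimates of the previous step to $u - v_H$ (and adding $\|\nabla u\|_{L^2(\wt{\omega}_{x,0})}$ for the gradient term) produces both parts of \eqref{stableproj} with $\local = C C_P$.

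\emph{Main obstacle.} The delicate case is patches touching $\partial \Omega$: there $\sum_y \lambda_y \not\equiv 1$ and $\wt{V}_H(\wt{\omega}_x)$ contains no nonzero constants, so the mean-value trick above fails. I reconcile this by invoking the Dirichlet condition $u|_{\partial \Omega} = 0$ and using a Friedrichs-type inequality, with the boundary trace playing the role that the mean value did for interior patches; this gives the same scaling and the same constant $C_P$ (absorbed into the maximum in \eqref{poincarelocal}), completing the proof.
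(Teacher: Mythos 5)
Your proof is correct, but it assembles the approximation estimate by a different mechanism than the paper. The paper never invokes the projection property to derive \eqref{stableproj}: it writes $u-\Qintp u=\sum_{x}(u-({\cal P}_{x}u)(x))\lambda_{x}$ via the partition of unity and then telescopes each summand through $u-{\cal P}_{x}u$ (controlled by reproduction of constants plus the Poincar\'{e} inequality) and ${\cal P}_{x}u-({\cal P}_{x}u)(x)$ (controlled by an explicit Lipschitz/inverse bound on the discrete function ${\cal P}_{x}u$); the projection property is verified separately at the end, on the basis functions ${\cal R}\lambda_{x}$ and by linearity. You instead prove the projection/consistency property first and combine it with local $L^{2}$-stability of $\Qintp$ in the classical ``stability $+$ reproduction of constants $\Rightarrow$ quasi-optimality'' pattern, subtracting $\bar{u}^{\wt{\omega}_{x,1}}$ and applying \eqref{poincarelocal}. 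Your route is arguably cleaner and makes the role of each ingredient more transparent; the paper's route avoids any appeal to reproduction of nonzero constants by $\Qintp$ itself (only by the local ${\cal P}_{x}$). Both treatments restrict to interior patches and defer boundary patches to a Friedrichs inequality, and both implicitly rely on a discrete norm equivalence $|v_{H}(x)|\leq C|\wt{\omega}_{x}|^{-1/2}\norm{v_{H}}_{L^2(\wt{\omega}_{x})}$ on the \emph{perforated} patch, whose constant could in principle degrade if the perforation swallows most of $\mathrm{supp}(\lambda_{x})$ --- a point neither argument examines. One presentational caveat: the identity $u-\Qintp u=(u-v_{H})-\Qintp(u-v_{H})$ ``for every $v_{H}\in\wt{V}_{H}$'' cannot be applied literally with $v_{H}$ a nonzero constant, since constants are excluded from $\wt{V}_{H}$ by the Dirichlet condition; you must (and effectively do) use instead that $\Qintp$ is local and reproduces constants elementwise on interior patches, so the identity holds on $\wt{\omega}_{x,0}$ for any $w$ equal to that constant on $\wt{\omega}_{x,1}$.
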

\begin{proof}
Note that we have easily from this definition taking $v_{H}=({\cal P}_{x} u)$
 and applying Cauchy-Schwarz 
thus, $\TwoNorm{{\cal P}_{x}u}{\wt{\omega}_{x}}\leq \TwoNorm{u}{\wt{\omega}_{x}}$. We use $u-\avrg{u}{\wt{\omega}_{x}}$, here again  $\avrg{u}{\wt{\omega}_{x}}=\frac{1}{|\wt{\omega}_x|}\int_{\wt{\omega}_x}udz,$ the fact that the $L^2$ projection of a constant is itself, and the fact that $(1-{\cal P}_{x})$ is also a projection we obtain 
\begin{align}\label{localstab}
 \norm{u-{\cal P}_x u }_{L^2(\wt{\omega}_{x})}\leq& \norm{u-\avrg{u}{\wt{\omega}_{x}} }_{L^2(\wt{\omega}_{x})}\leq H C_P  \norm{\nabla u }_{L^2(\wt{\omega}_{x})}.
\end{align}
Here, we used the inequality \eqref{poincarelocal} to obtain the gradient bound. To obtain the derivative bound note that by a use of the inverse inequality and \eqref{poincarelocal} we have 
\begin{align}
 \norm{\nabla (u-{\cal P}_x u) }_{L^2(\wt{\omega}_{x})}
\label{localstabder}
& \leq (1+C C_{p}) \norm{\nabla u }_{L^2(\wt{\omega}_{x})}.
\end{align}
This is merely the $H^1$ stability of the $L^2$ projection c.f. \cite{projstable} and references therein.

We suppose that the basis functions form a partition of unity, that is $\sum_{x\in{\cal N}_{H}}\lambda_x=1$. We are only proving for the elements that do not meet the boundary. If the elements meet the boundary the Friedrichs' inequality can be utilized. 
%
%
Thus, we have for the $L^2$ norm
\begin{align}
\norm{u-\Qintp u}_{L^2(\wt{\omega}_{x} )}&=\norm{u-\sum_{x\in{\cal N}_{H}} ({\cal P}_{x} u)(x) \lambda_{x}}_{L^2(\wt{\omega}_{x} )}\nonumber\\
&=\norm{\sum_{x\in{\cal N}_{H}}(u- ({\cal P}_{x} u)(x)) \lambda_{x}}_{L^2(\wt{\omega}_{x}  )}\nonumber\\
&\leq \sum_{x\in{\cal N}_{H}} \norm{u- ({\cal P}_{x} u)(x)  }_{L^2(\wt{\omega}_{x} )}\nonumber\\
\label{inequ1}
&\leq \sum_{x\in{\cal N}_{H}} \norm{u- {\cal P}_{x} u  }_{L^2(\wt{\omega}_{x} )}+\sum_{x\in{\cal N}_{H}} \norm{{\cal P}_{x} u-({\cal P}_{x} u)(x)  }_{L^2(\wt{\omega}_{x} )}.
\end{align}
We can easily estimate the first term by using \eqref{localstab}, taking a closer look at the second term, again using the partition of unity property, we have 
\begin{align*}
\norm{{\cal P}_{x} u-({\cal P}_{x} u)(x)  }_{L^2(\wt{\omega}_{x}  )}&=\norm{\sum_{x'}\left( ({\cal P}_{x} u)(x')-({\cal P}_{x} u)(x)  \right)\lambda_{x'}  }_{L^2(\wt{\omega}_{x} )}\\
&\leq  \sum_{x'}\norm{ ({\cal P}_{x} u)(x')-({\cal P}_{x} u)(x)  }_{L^2(\wt{\omega}_{x} )}\\
&\leq  \sum_{x'} |\wt{\omega}_{x,1} |^{\frac{1}{2}}|   ({\cal P}_{x} u)(x')-({\cal P}_{x} u)(x)  |\\
& \leq C \sum_{x'} |\wt{\omega}_{x,1} |^{\frac{1}{2}}  H_{}  \norm{ \nabla ({\cal P}_{x} u)(x)  }_{L^{\infty}(\wt{\omega}_{x} )}\\
& \leq  C \sum_{x'} H_{}  \norm{ \nabla ({\cal P}_{x} u)(x)  }_{L^{2}(\wt{\omega}_{x} )}
\end{align*}
Returning to \eqref{inequ1}, we have 
\begin{align}
\norm{u-\Qintp u}_{L^2(\wt{\omega}_{x} )} & \leq \sum_{x\in{\cal N}_{H}} \norm{u- {\cal P}_{x} u  }_{L^2(\wt{\omega}_{x}) }+\sum_{x\in{\cal N}_{H}} \norm{{\cal P}_{x} u-({\cal P}_{x} u)(x)  }_{L^2(\wt{\omega}_{x} )}\nonumber\\
&\leq \sum_{x\in {\cal N}_{H} }H C_{P}  \norm{\nabla u }_{L^2(\wt{\omega}_{x,1})}+C \sum_{x'\in {\cal N}_{H}}  H_{}  \norm{ \nabla ({\cal P}_{x} u)(x)  }_{L^{2}(\wt{\omega}_{x} )}\nonumber\\
\label{L2Est}
&\leq C C_{p} H_{} \norm{\nabla u }_{L^2(\wt{\omega}_{x,1})}.
\end{align}
Using the estimate \eqref{localstabder}, and a similar argument as above for the $L^2$ estimate  \cite{OhlbergerNotes}, we obtain the derivative estimate 
\begin{align}
\label{L2Est}
\norm{\nabla (u-\Qintp u)}_{L^2(\wt{\omega}_{x} )}  \leq C C_{p}  \norm{\nabla u }_{L^2(\wt{\omega}_{x,1})}.
\end{align}
To see the $\Qintp$ is a projection note  for ${\cal P}_{x}$, the local patch $L^2$ projection, acting on ${\cal R} \lambda_x$ is  a projection, and moreover is identity. By definition we have
\begin{align}
\int_{\wt{\omega}_{x}}( {\cal P}^2_{x}\lambda_{x}) v_{H}dz= \int_{\wt{\omega}_{x}}\lambda_x v_{H}dz \text{ for all } v_{H}\in  \wt{V}_{H}(\wt{\omega}_{x}),
\end{align}
and thus it is trivial to see ${\cal P}^2_{x}\lambda_{x}={\cal P}_{x}\lambda_{x}=\lambda_{x}$ on $\wt{\omega}_{x}$ for all $x\in {\cal N}_{H}$. Thus, 
$$\Qintp({\cal R} \lambda_{x})=\sum_{x'\in{\cal N}_{H}} ({\cal P}_{x'}  ({\cal R}\lambda_{x'} )  )(x){\cal R}  \lambda_{x}=\sum_{x'\in{\cal N}_{H}} ( {\cal R}\lambda_{x'}   )(x'){\cal R}  \lambda_{x}={\cal R}  \lambda_{x},$$
and so $\Qintp^2({\cal R}\lambda_{x})=\Qintp({\cal R}\lambda_{x})={\cal R}\lambda_{x},$ and so by linearity
$$\Qintp^2(u)=\Qintp\left(\sum_{x\in{\cal N}_{H}} ({\cal P}_{x}  u)(x) {\cal R}  \lambda_{x}\right)=\sum_{x\in{\cal N}_{H}} ({\cal P}_{x}  u)(x)\Qintp( {\cal R}  \lambda_{x})=\sum_{x\in{\cal N}_{H}} ({\cal P}_{x}  u)(x) {\cal R}  \lambda_{x}.$$ From here we see that $\Qintp^2=\Qintp$.
\qed
\end{proof}

\section{Auxiliary Lemmas}\label{auxsection}

Now we will prove and state the auxiliary lemmas used to prove estimate \eqref{eq:errorglobal}.  These proofs are largely based on the works \cite{Henning.Morgenstern.Peterseim:2014,MP11} and references therein. However, here we must carefully track the occurrence of Poincar\'{e} constants.

First, we begin with the quasi-incusion property. For $x,x' \in{\cal N}_{H}$ and $l,k\in \mathbb{N}$ and $m=0,1,\cdots,$ with $k\geq l\geq 2$ we have if
\begin{align}\label{quasiinclusion}
\wt{\omega}_{x',m+1}\cap \left(\wt{\omega}_{x,k}\backslash \wt{\omega}_{x,l} \right)\neq \emptyset, \text{ then }\wt{\omega}_{x',1}\subset\left(\wt{\omega}_{x,k+m+1}\backslash \wt{\omega}_{x,l-m-1}. \right)
\end{align}

We will use the cutoff functions defined in \cite{Henning.Morgenstern.Peterseim:2014}. 
 For $x\in {\cal N}_{H}$ and $k>l\in \mathbb{N}$, let $\eta^{k,l}_{x}:\perf \to [0,1]$ be a continuous weakly differentiable functions so that 
 \begin{subequations}\label{cutoff1}
\begin{align}
\left(   \eta^{k,l}_{x}  \right)|_{\wt{\omega}_{x,k-l}}&=0,\\
\left(   \eta^{k,l}_{x}  \right)|_{\perf \backslash \wt{\omega}_{x,k}}&=1,\\
\forall T\in {\cal T}_{H}, \norm{\nabla \eta^{k,l}_x}_{L^{\infty}(T)}&\leq C_{}\frac{1}{l H_{}}.
\end{align}
\end{subequations}
A precise form of $\eta_{x}^{k,l}$ can be written as 
$$
\eta_{x}^{k,l}(x')=\frac{ \text{dist}(x',\wt{\omega}_{x,k-l})  }{   \text{dist}(x',\wt{\omega}_{x',k-l}) +\text{dist}(x',\perf \backslash \wt{\omega}_{x,k})  }. 
$$
If $\perf \backslash \wt{\omega}_{x,k}=\emptyset$, then we prescribe $\eta^{k,j}_{x}=0$.

 Unlike in \cite{Henning.Morgenstern.Peterseim:2014}, we are using a quasi-interpolation that is also a projection. This simplifies the proofs since there is no need to construct an approximate projection. 
Here we will need the following simplified quasi-invariance of the fine-scale space under multiplication by cutoff functions. We write this estimate in the following lemma. 
 
 \begin{lemma}\label{qi}
 Let $k>l\in \mathbb{N}$ and  $x\in {\cal N}_{H}$. Suppose that $w\in \wt{V}^f$, then we have the estimate
 \begin{align}
 \TwoNorm{\nabla \Qintp(\eta_{x}^{k,l} w)   }{\perf}\leq C_{1} l^{-1}\TwoNorm{ \nabla w }{ \wt{\omega}_{x,k+2}\backslash \wt{\omega}_{x,k-l-2} },
  \end{align}
  here $C^2_{1}=C^2_{lip} \local +\local^3$.
 \end{lemma}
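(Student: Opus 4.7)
The plan is to exploit the fine-scale condition $\Qintp w = 0$, together with the locality of $\mathcal{P}_{y}$, to show that only nodes $y$ whose patch meets the transition zone of $\eta_{x}^{k,l}$ contribute to $\Qintp(\eta_{x}^{k,l} w)$, and to extract the $l^{-1}$ factor from the gradient bound on the cutoff.

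First I would decompose
\[
\Qintp(\eta_{x}^{k,l} w) = \sum_{y \in {\cal N}_{H}} (\mathcal{P}_{y}(\eta_{x}^{k,l} w))(y)\, \mathcal{R}\lambda_{y},
\]
and identify which nodal coefficients vanish. Since $w\in \wt{V}^f$ and the $\{\mathcal{R}\lambda_{y}\}$ are linearly independent, $\Qintp w = 0$ forces $(\mathcal{P}_{y} w)(y) = 0$ for every $y$. Two cases give zero contribution: if $\wt{\omega}_{y,0}\subset\wt{\omega}_{x,k-l}$, then $\eta_{x}^{k,l}\equiv 0$ on $\wt{\omega}_{y,0}$ and the coefficient vanishes; if $\wt{\omega}_{y,0}\cap \wt{\omega}_{x,k}=\emptyset$, then $\mathcal{P}_{y}(\eta_{x}^{k,l} w) = \mathcal{P}_{y} w$, whose value at $y$ vanishes. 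So only nodes $y$ whose patch $\wt{\omega}_{y,0}$ meets the annulus $\wt{\omega}_{x,k}\setminus\wt{\omega}_{x,k-l}$ contribute, and the quasi-inclusion property \eqref{quasiinclusion} places the corresponding $\wt{\omega}_{y,1}$ inside $\wt{\omega}_{x,k+2}\setminus\wt{\omega}_{x,k-l-2}$, which pins down the annular support on the right-hand side.

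For each contributing $y$, I would use the identity $(\mathcal{P}_{y}(\eta_{x}^{k,l} w))(y) = (\mathcal{P}_{y}((\eta_{x}^{k,l} - c_{y}) w))(y)$, valid for any constant $c_{y}$ (again because $(\mathcal{P}_{y} w)(y)=0$), and choose $c_{y}$ to be the mean of $\eta_{x}^{k,l}$ on $\wt{\omega}_{y,0}$ so that $\|\eta_{x}^{k,l} - c_{y}\|_{L^{\infty}(\wt{\omega}_{y,0})} \leq C/l$, using $|\nabla \eta_{x}^{k,l}|\leq C/(lH)$ and $\operatorname{diam}(\wt{\omega}_{y,0})\leq CH$. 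Combining the $L^{2}$-stability of $\mathcal{P}_{y}$, an inverse estimate on $\mathcal{R}\lambda_{y}$, and the local fine-scale bound
\[
\|w\|_{L^{2}(\wt{\omega}_{y,0})} \leq \local H \,\|\nabla w\|_{L^{2}(\wt{\omega}_{y,1})},
\]
obtained by applying \eqref{stableproj} to $w - \Qintp w = w$, controls the contribution of node $y$ to $\|\nabla \Qintp(\eta_{x}^{k,l} w)\|_{L^{2}(\perf)}$ by $(C/l)\local \|\nabla w\|_{L^{2}(\wt{\omega}_{y,1})}$. Squaring, summing, and invoking the finite overlap $C_{ol}$ of the extended patches then yields the desired annular estimate.

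The main technical obstacle is producing exactly the mixed constant $C_{1}^{2} = C_{lip}^{2}\local + \local^{3}$ announced in the statement. This requires replacing the $L^{2}$/inverse-estimate step above by $H^{1}$-stability of the projective interpolator (with Lipschitz constant $C_{lip}$), applied to the decomposition
\[
\nabla\bigl((\eta_{x}^{k,l} - c_{y}) w\bigr) = (\nabla \eta_{x}^{k,l}) w + (\eta_{x}^{k,l}-c_{y})\nabla w,
\]
and treating the two pieces separately: the first piece, whose $L^{2}$-norm is controlled through the Poincaré-type bound $\|w\|_{L^{2}} \leq \local H \|\nabla w\|_{L^{2}}$ and thus carries an additional factor $\local$, produces the $\local^{3}$ summand, while the second piece is bounded using only the $H^{1}$-Lipschitz norm $C_{lip}$ of $\Qintp$ and the $L^{\infty}$-oscillation of $\eta_{x}^{k,l}-c_{y}$, yielding the $C_{lip}^{2}\local$ term. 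Because $c_{y}$ must be chosen per node, a single global $H^{1}$-stability estimate of $\Qintp$ is not available, and this nodewise bookkeeping is the delicate part of the argument.
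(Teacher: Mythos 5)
Your proposal is correct and, in its refined form (your final paragraph), is essentially the paper's own argument: subtract a local mean of the cutoff using $\Qintp w=0$, apply the local $H^1$-stability of $\Qintp$, split via the product rule into the $(\nabla\eta_{x}^{k,l})\,w$ and $(\eta_{x}^{k,l}-c)\nabla w$ pieces to produce the $\local^3$ and $C_{lip}^2\local$ summands respectively, and confine the right-hand side to the annulus via the quasi-inclusion property. The nodewise bookkeeping you flag as delicate is resolved in the paper patch-by-patch rather than node-by-node: one sums $\TwoNorm{\nabla \Qintp(\eta_{x}^{k,l}w)}{\wt{\omega}_{x'}}^2$ over $x'$ and uses on each patch the identity $\Qintp(\eta_{x}^{k,l}w)=\Qintp\bigl((\eta_{x}^{k,l}-\avrg{\eta_{x}^{k,l}}{\wt{\omega}_{x',1}})w\bigr)$ together with the local stability estimate \eqref{stableproj}, which is exactly your per-node constant choice in patch form.
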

 
 \begin{proof}
Fixing $x$ and $k$, we denote the average  as $\avrg{\eta_{x}^{k,l}}{\wt{\omega}_{x',1} }=\frac{1}{|\wt{\omega}_{x',1}|} \int_{\wt{\omega}_{x',1}} \eta_{x}^{k,l}dz. $ We estimate on a single patch $\wt{\omega}_{x}$, using the fact that $\Qintp(w)=0$ and the estimate \eqref{stableproj} we have 

\begin{align*}
&\TwoNorm{\nabla \Qintp(\eta_{x}^{k,l}w )}{\wt{\omega}_{x'}}=\TwoNorm{\nabla \Qintp((\eta_{x}^{k,l}-\avrg{\eta_{x}^{k,l}}{\wt{\omega}_{x',1}}) w)}{\wt{\omega}_{x'}}\\
&\leq \local\TwoNorm{\nabla ( (\eta_{x}^{k,l}-\avrg{\eta_{x}^{k,l}}{\wt{\omega}_{x',1}})  w)  }{\wt{\omega}_{x',1}}\\
&\leq  \local\left(    \TwoNorm{(\eta_{x}^{k,l}-\avrg{\eta_{x}^{k,l}}{\wt{\omega}_{x',1}})\nabla w }{ \wt{\omega}_{x',1} }    +   \TwoNorm{\nabla  \eta^{k,l}_{x} (w-\Qintp(w)) }{ \wt{\omega}_{x',1}}  \right).
%
\end{align*}
Summing over all $x\in {\cal N}_{H}$, using the quasi-inclusion property \eqref{quasiinclusion}, and the above calculation yields 
\begin{align*}
 \TwoNorm{\nabla \Qintp(\eta_{x}^{k,l} w)   }{\perf}^2 & \leq \sum_{x'\in {\cal N}_{H}}  \TwoNorm{\nabla \Qintp(\eta_{x}^{k,l} w)   }{\wt{\omega}_{x'} }^2\\
 &\leq \local \sum_{\wt{\omega}_{x'}\subset\wt{\omega}_{x,k+2}\backslash \wt{\omega}_{x,k-l-2} }   \TwoNorm{\nabla ( (\eta_{x}^{k,l}-\avrg{\eta_{x}^{k,l}}{\wt{\omega}_{x',1}})  w)  }{\wt{\omega}_{x',1}}^2\\
 &\leq \local \sum_{\wt{\omega}_{x'}\subset\wt{\omega}_{x,k+2}\backslash \wt{\omega}_{x,k-l-2} }      \TwoNorm{ (\eta_{x}^{k,l}-\avrg{\eta_{x}^{k,l}}{\wt{\omega}_{x',1}})\nabla w }{ \wt{\omega}_{x',1} }^2      \\
 &+ \local \sum_{\wt{\omega}_{x'}\subset\wt{\omega}_{x,k+2}\backslash \wt{\omega}_{x,k-l-2} }     \TwoNorm{\nabla  \eta^{k,l}_{x}( w-\Qintp(w)) }{ \wt{\omega}_{x',1}}^2.
 \end{align*}
 Noting that $\nabla \eta_{x}^{k,l}\neq 0$ only in $\wt{\omega}_{x,k}\backslash\wt{\omega}_{x,k-l} $ and $(\eta_{x}^{k,l}-\avrg{\eta_{x}^{k,l}}{\wt{\omega}_{x',1}})\neq0$ only if $\wt{\omega}_{x',k}$ intersects $\wt{\omega}_{x,k}\backslash\wt{\omega}_{x,k-l} $ hence we obtain the tighter estimate 
 
\begin{align*}
 \TwoNorm{\nabla \Qintp(\eta_{x}^{k,l} w)   }{\perf}^2 &\leq \local \sum_{\wt{\omega}_{x'}\subset\wt{\omega}_{x,k+1}\backslash \wt{\omega}_{x,k-l-1} }      \TwoNorm{ (\eta_{x}^{k,l}-\avrg{\eta_{x}^{k,l}}{\wt{\omega}_{x',1}})\nabla w }{ \wt{\omega}_{x',1} }^2    \\
 &+ \local \sum_{\wt{\omega}_{x'}\subset\wt{\omega}_{x,k+1}\backslash \wt{\omega}_{x,k-l-1} }     \TwoNorm{\nabla  \eta^{k,l}_{x}( w-\Qintp(w)) }{ \wt{\omega}_{x',1}}^2  .
 \end{align*}
Using the Lipschitz bound $\norm{\eta_{x}^{k,l}-\avrg{\eta_{x}^{k,l}}{\wt{\omega}_{x',1}}}_{L^{\infty}(\wt{\omega}_{x',1})} \leq C_{lip}H \norm{\nabla \eta_{x}^{k,l} }_{L^{\infty}(\wt{\omega}_{x',1})} $ on the first term and  \eqref{stableproj} on the second we obtain

\begin{align*}
 \TwoNorm{\nabla \Qintp(\eta_{x}^{k,l} w)   }{\perf}^2 &\leq C^2_{lip} \local
   H^2 \norm{\nabla \eta_{x}^{k,l} }_{L^{\infty}(\perf)}^2  \TwoNorm{ \nabla w }{ \wt{\omega}_{x,k+1}\backslash \wt{\omega}_{x,k-l-1} }^2    \\
 &+ \local^3  H^2 \norm{\nabla \eta_{x}^{k,l} }_{L^{\infty}(\perf)}^2  \TwoNorm{ \nabla w }{ \wt{\omega}_{x,k+1}\backslash \wt{\omega}_{x,k-l-1} }^2 .
 \end{align*}
 Finally, taking another layer on the outside and inside of the annulus patch we arrive at 
 \begin{align*}
 \TwoNorm{\nabla \Qintp(\eta_{x}^{k,l} w)   }{\perf}^2 \leq l^{-2}(C^2_{lip} \local +\local^3 )
    \TwoNorm{ \nabla w }{ \wt{\omega}_{x,k+2}\backslash \wt{\omega}_{x,k-l-2} }^2,
    \end{align*}
    here $C^2_{1}=C^2_{lip} \local +\local^3$, and note that  $C_{lip} \leq C C_{P}$.
 \qed
 \end{proof}
 
 We now will demonstrate the decay of the fine-scale space in the next lemma. 
 
\begin{lemma}\label{decaylemma}
Fix some $x\in {\cal N}_{H}$ and $F\in (\wt{V}^f)'$ the dual of $\wt{V}^f$ satisfying $F(w)=0$ for all $w\in \wt{V}^f(\perf\backslash \wt{\omega}_{x,1})$.  Then, for $u\in \wt{V}^F$ the solution of 
\begin{align}
\int_{\perf} \nabla u \nabla w dz =F(w) \text{ for all } w\in \wt{V}^f.
\end{align}
Then, there exists a constant $\theta\in (0,1)$ such that for $k\in \mathbb{N}$  we have 
\begin{align}
\TwoNorm{\nabla u}{\perf \backslash \wt{\omega}_{x,k}}\leq \theta^k \TwoNorm{\nabla u}{\perf }.
\end{align}
We have $\theta=e^{-\frac{1}{\lceil C_{2} e \rceil+2}}\in (0,1)$,  here  $C_{2}=(C_{1} +C\local )$
\end{lemma}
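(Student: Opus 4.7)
The plan is to prove a Caccioppoli-type recursive inequality of the form $a_k \leq (C_2/l)\,a_{k-l-2}$ where $a_j := \TwoNorm{\nabla u}{\perf\setminus \wt{\omega}_{x,j}}^2$, and then iterate it. Choosing $l = \lceil C_2 e\rceil$ makes the contraction factor at most $e^{-1}$ per step; iterating over roughly $k/(l+2)$ steps and taking a square root produces the bound $\TwoNorm{\nabla u}{\perf\setminus\wt{\omega}_{x,k}} \leq \theta^k \TwoNorm{\nabla u}{\perf}$ with the stated $\theta = e^{-1/(\lceil C_2 e\rceil+2)}$.

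To obtain the Caccioppoli inequality, fix $k \geq l+2$ and set $\eta := \eta_x^{k,l}$ from \eqref{cutoff1}. I would use as test function
\begin{align*}
w := \eta u - \Qintp(\eta u) \in \wt{V}^f,
\end{align*}
which lies in $\wt{V}^f$ since $\Qintp$ annihilates it. Because $\eta$ vanishes on $\wt{\omega}_{x,k-l}$ and $\Qintp$ grows supports by at most one coarse patch layer, $\mathrm{supp}(w) \subset \perf\setminus\wt{\omega}_{x,k-l-1}$, which is disjoint from $\wt{\omega}_{x,1}$ under the standing assumption $k\geq l+2$. The hypothesis on $F$ then gives $F(w)=0$, so testing the corrector equation with $w$ and expanding the product rule yields, using $\eta\equiv 1$ on $\perf\setminus\wt{\omega}_{x,k}$,
\begin{align*}
\TwoNorm{\nabla u}{\perf\setminus\wt{\omega}_{x,k}}^{2} \leq -\int_{\perf} u \nabla u \cdot \nabla\eta\,dz + \int_{\perf} \nabla u\cdot \nabla\Qintp(\eta u)\,dz.
\end{align*}

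For the first term, $\nabla\eta$ is supported in the annulus $A := \wt{\omega}_{x,k}\setminus\wt{\omega}_{x,k-l}$ with $\|\nabla\eta\|_{\infty} \lesssim 1/(lH)$; since $u\in\wt{V}^f$ satisfies $\Qintp u = 0$, the local stability bound \eqref{stableproj} gives $\TwoNorm{u}{A} \leq CH\local\TwoNorm{\nabla u}{A^+}$ on a one-layer enlargement $A^+$ of $A$, producing a bound by $(C\local/l)\TwoNorm{\nabla u}{\perf\setminus\wt{\omega}_{x,k-l-2}}^2$ after Cauchy--Schwarz. For the second term I would invoke Lemma \ref{qi} to get $\TwoNorm{\nabla \Qintp(\eta u)}{\perf} \leq (C_1/l)\TwoNorm{\nabla u}{\wt{\omega}_{x,k+2}\setminus \wt{\omega}_{x,k-l-2}}$, and pair it via Cauchy--Schwarz with $\TwoNorm{\nabla u}{\perf\setminus\wt{\omega}_{x,k-l-2}}$ (which is all that appears because $\Qintp(\eta u)$ is supported outside $\wt{\omega}_{x,k-l-1}$). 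Summing, both contributions combine into $(C_2/l)\TwoNorm{\nabla u}{\perf\setminus\wt{\omega}_{x,k-l-2}}^2$ with $C_2 = C_1 + C\local$.

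With the recursion $a_k \leq (C_2/l) a_{k-l-2}$ in hand, pick $l = \lceil C_2 e\rceil$ so that $C_2/l \leq e^{-1}$. Iterating $n = \lfloor k/(l+2)\rfloor$ times starting from $a_0 \leq \TwoNorm{\nabla u}{\perf}^2$ gives $a_k \leq e^{-n}\TwoNorm{\nabla u}{\perf}^2$, and taking square roots delivers the claim with $\theta = e^{-1/(l+2)} = e^{-1/(\lceil C_2 e\rceil+2)}$ (the loss from rounding $k$ to a multiple of $l+2$ is absorbed into the constant in the usual way, or one sharpens the argument by an $\varepsilon$-weighted Cauchy--Schwarz in the second term). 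The main obstacle is the bookkeeping on nested patches $\wt{\omega}_{x,j}$ so that every norm on the right-hand side genuinely sits on a smaller exterior than the left-hand side; once the layer-arithmetic is set up consistently the $C_2$ dependence on the Poincar\'{e} constant emerges directly from Lemma \ref{qi} and \eqref{stableproj}, yielding exactly the constant advertised in the remark following Theorem \ref{errorlocal}.
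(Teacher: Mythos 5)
Your proposal is correct and follows essentially the same route as the paper: the same test function $\eta_x^{k,l}u-\Qintp(\eta_x^{k,l}u)$, the same Caccioppoli-type splitting into the $u\nabla\eta$ term (handled via $\Qintp u=0$ and \eqref{stableproj}) and the $\nabla\Qintp(\eta u)$ term (handled via Lemma \ref{qi}), and the same choice $l=\lceil C_2 e\rceil$ with iteration. The only differences are cosmetic bookkeeping on which patch layer the support of the test function lands in, which you already flag as the point requiring care.
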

\begin{proof}
Letting $\eta_{x}^{k,l}$ be the cut-off function as in the previous lemma for $l<k-3$. Let $\tilde{u}=\eta_{x}^{k,l} u -\Qintp(\eta_{x}^{k,l} u )\in \wt{V}^f(\perf\backslash \wt{\omega}_{x,k-l-2})$, and note that from Lemma \ref{qi} we have 
\begin{align}\label{qiestimate}
\TwoNorm{\nabla( \eta_{x}^{k,l}u-\tilde{u})}{\perf }=\TwoNorm{\nabla \Qintp(\eta_{x}^{k,l} u ) }{\perf }\leq C_{1} l^{-1}\TwoNorm{ \nabla u }{ \wt{\omega}_{x,k+2}\backslash \wt{\omega}_{x,k-l-2} },
\end{align}
from this estimate and the properties of $F$ we have 
\begin{align}\label{relation}
\int_{\perf\backslash \wt{\omega}_{x,k-l-2}} \nabla u \nabla \tilde{u}dz=\int_{\perf} \nabla u \nabla \tilde{u}dz=F(\tilde{u})=0.
\end{align}
We have via Caccioppoli type argument that 
\begin{align}
\TwoNorm{\nabla u}{\perf\backslash \wt{\omega}_{x,k}}^2&\leq \int_{\perf\backslash \wt{\omega}_{x,k-l-2}} \eta_{x}^{k,l} |\nabla u|^2dz\\
&\leq  \int_{\perf\backslash \wt{\omega}_{x,k-l-2}} \nabla u\left(\nabla (\eta_{x}^{k,l} u ) -u\nabla \eta_{x}^{k,l}     \right)dz.
\end{align}
Using the fact that $\Qintp(u)=0$, estimate \eqref{qiestimate}, and  the relation \eqref{relation} we have 
\begin{align*}
\TwoNorm{\nabla u}{\perf\backslash \wt{\omega}_{x,k}}^2&\leq  \int_{\perf\backslash \wt{\omega}_{x,k-l-2}} \nabla u(\nabla (\eta_{x}^{k,l} u )-\tilde{u})dz\\ 
&-\int_{\perf\backslash \wt{\omega}_{x,k-l-2}}\nabla u(u-\Qintp(u))\nabla \eta_{x}^{k,l}dz\\
&\leq  C_{1} l^{-1}\TwoNorm{ \nabla u }{ \perf \backslash \wt{\omega}_{x,k-l-2} }^2\\ 
&+C(l H)^{-1}\TwoNorm{ \nabla u }{ \perf \backslash \wt{\omega}_{x,k-l-2} }\TwoNorm{  u -\Qintp(u)}{ \perf \backslash \wt{\omega}_{x,k-l-2} }    \\
&\leq l^{-1} C_{2}\TwoNorm{ \nabla u }{ \perf \backslash \wt{\omega}_{x,k-l-2} }^2.
\end{align*}
On the last term we used the projection estimate \eqref{stableproj} and here  $C_{2}=(C_{1} +C\local ).$ Note here that this $C$ is the benign constant from the estimate of $\nabla \eta_{x}^{k,j}.$ Taking $l=\lceil C_{2} e \rceil$ and successive applications of the above estimate yields 
\begin{align*}
\TwoNorm{\nabla u}{\perf\backslash \wt{\omega}_{x,k}}^2&\leq e^{-1} \TwoNorm{ \nabla u }{ \perf \backslash \wt{\omega}_{x,k-l-2} }^2\\
&\leq e^{- \lfloor \frac{k-1}{l+2}  \rfloor} \TwoNorm{ \nabla u }{ \perf \backslash \wt{\omega}_{x,1} }^2
\leq e^{- \lfloor \frac{k}{l+2}  \rfloor} \TwoNorm{ \nabla u }{ \perf  }^2.
\end{align*}
Finally, taking $\theta=e^{-\frac{1}{\lceil C_{2} e \rceil+2}}$ yields the result. \qed

\end{proof}

We now are ready to state our result on the error introduced from localization. The heart of this argument is to estimate the error between the truncated corrector $Q_{k}$ constructed, after summing over $x$ from \eqref{Qcorrector} and the ideal corrector when $k$ is large enough so that we obtain $Q_{\perf}$. 

\begin{lemma}\label{localglobal}
Let $u_{H}\in \wt{V}_{H}$, let $Q_{m}$ be constructed from \eqref{Qcorrector}, and $Q_{\perf}$ defined to be the "ideal" corrector without truncation, then 
\begin{align}
\TwoNorm{\nabla( Q_{\perf}(u_{H})-Q_{m}(u_{H}))  }{\perf}\leq   m^{\frac{d}{2}}C_{4} \theta^{m  }   \TwoNorm{ \nabla Q_{\perf}(u_{H})  }{\perf},
\end{align}
with  $C_{4}=C_{3}    (1+ C_{1}^2 )^{\frac{1}{2}}$ and $C_{3}=(1+C_{1}+C\local)$.
\end{lemma}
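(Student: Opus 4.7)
The plan is to follow the now-standard LOD localization argument (cf.\ \cite{Henning.Morgenstern.Peterseim:2014,MP11}), but with careful bookkeeping of the Poincar\'{e}-dependent constants through the two auxiliary lemmas already at hand (Lemma \ref{qi} and Lemma \ref{decaylemma}). Exploiting the partition-of-unity property $\sum_{x} \hat{\lambda}_{x}=1$, the first step is to decompose the ideal corrector node-by-node: for each $x\in{\cal N}_{H}$ define $Q_{x,\perf}(u_{H})\in\wt{V}^f$ as the unique solution of
$$\int_{\perf}\nabla Q_{x,\perf}(u_{H})\,\nabla w\,dz=\int_{\wt{\omega}_{x}}\hat{\lambda}_{x}\nabla u_{H}\,\nabla w\,dz\quad \text{for all } w\in\wt{V}^f,$$
so that $Q_{\perf}(u_{H})=\sum_{x}Q_{x,\perf}(u_{H})$. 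The right-hand functional vanishes on $w\in\wt{V}^f(\perf\setminus\wt{\omega}_{x,1})$, so Lemma \ref{decaylemma} applies to each $Q_{x,\perf}(u_{H})$ and yields the exponential tail $\TwoNorm{\nabla Q_{x,\perf}(u_{H})}{\perf\setminus\wt{\omega}_{x,k}}\leq\theta^{k}\TwoNorm{\nabla Q_{x,\perf}(u_{H})}{\perf}$. Since $\wt{V}^f(\wt{\omega}_{x,m})\subset\wt{V}^f$, the localized corrector $Q_{x,m}(u_{H})$ is precisely the Galerkin projection of $Q_{x,\perf}(u_{H})$ onto $\wt{V}^f(\wt{\omega}_{x,m})$, so C\'{e}a's lemma (with constant $1$, since the bilinear form \emph{is} the energy) reduces the problem to exhibiting a single good approximation in $\wt{V}^f(\wt{\omega}_{x,m})$.

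The next step is a cutoff-and-project construction of the competitor. Fix an $O(1)$ shift $c$ and an $O(1)$ layer count $l$, both eventually absorbed into the base of $\theta$, and set
$$v_{x}:=(1-\eta_{x}^{m-c,l})Q_{x,\perf}(u_{H})-\Qintp\bigl((1-\eta_{x}^{m-c,l})Q_{x,\perf}(u_{H})\bigr).$$
By projectivity of $\Qintp$ and the one-patch smearing of its support, $v_{x}\in\wt{V}^f(\wt{\omega}_{x,m})$. Since $\Qintp Q_{x,\perf}(u_{H})=0$, one gets the clean identity
$$Q_{x,\perf}(u_{H})-v_{x}=\eta_{x}^{m-c,l}Q_{x,\perf}(u_{H})-\Qintp\bigl(\eta_{x}^{m-c,l}Q_{x,\perf}(u_{H})\bigr).$$
Estimating the first term by the product rule: the piece $\eta\,\nabla Q_{x,\perf}(u_{H})$ is controlled on the annulus by Lemma \ref{decaylemma}, while the piece $(\nabla\eta)\,Q_{x,\perf}(u_{H})$ is controlled by combining $\norm{\nabla\eta}_{\infty}\lesssim(lH)^{-1}$ with \eqref{stableproj} (contributing $C\local$) and again the decay lemma. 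Estimating the second term by Lemma \ref{qi} contributes a factor $C_{1}l^{-1}$ times the same exponentially small annular quantity. Adding the pieces and invoking C\'{e}a gives the nodewise bound
$$\TwoNorm{\nabla(Q_{x,\perf}(u_{H})-Q_{x,m}(u_{H}))}{\perf}\leq C_{3}\,\theta^{m}\,\TwoNorm{\nabla Q_{x,\perf}(u_{H})}{\perf},\qquad C_{3}=1+C_{1}+C\local.$$

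The final step is global assembly. Since the individual errors $Q_{x,\perf}(u_{H})-Q_{x,m}(u_{H})$ are concentrated near $\wt{\omega}_{x,m}$ (up to exponentially small tails), the patches $\wt{\omega}_{x,m}$ have finite overlap of order $m^{d}$, and a Cauchy--Schwarz over nodes yields
$$\TwoNorm{\nabla(Q_{\perf}(u_{H})-Q_{m}(u_{H}))}{\perf}^{2}\leq C\,m^{d}\sum_{x\in{\cal N}_{H}}\TwoNorm{\nabla(Q_{x,\perf}(u_{H})-Q_{x,m}(u_{H}))}{\perf}^{2}.$$
Substituting the nodewise bound, what remains is the summability estimate
$$\sum_{x\in{\cal N}_{H}}\TwoNorm{\nabla Q_{x,\perf}(u_{H})}{\perf}^{2}\leq(1+C_{1}^{2})\TwoNorm{\nabla Q_{\perf}(u_{H})}{\perf}^{2},$$
which I expect to be the main obstacle. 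The standard approach, adapted from \cite{MP11,Henning.Morgenstern.Peterseim:2014}, is to test the defining equation of $Q_{x,\perf}(u_{H})$ with itself, recombine via $\sum_{x}\hat{\lambda}_{x}=1$, and dominate the cross/interaction terms produced by the overlapping supports of the $\hat{\lambda}_{x}$ by Lemma \ref{qi}; this is precisely where the extra factor $(1+C_{1}^{2})^{1/2}$ appears. Taking square roots delivers the advertised bound with $C_{4}=C_{3}(1+C_{1}^{2})^{1/2}$.
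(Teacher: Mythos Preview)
Your plan assembles the right ingredients (C\'{e}a on each local problem, cutoff-and-project competitors, Lemma \ref{qi}, Lemma \ref{decaylemma}), but the organisation is inverted relative to the paper and this inversion creates a real gap in your ``global assembly'' step.

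In the paper, the constant $C_{3}=1+C_{1}+C\local$ is \emph{not} produced by the nodewise C\'{e}a estimate; it arises from a duality argument applied directly to the global error $v:=Q_{\perf}(u_{H})-Q_{m}(u_{H})$. One writes $\TwoNorm{\nabla v}{\perf}^{2}=\sum_{x}\int_{\perf}\nabla(Q_{x,\perf}-Q_{x,m})\nabla v$, splits each integrand with a cutoff $\eta_{x'}^{k,1}$ centred at a neighbour $x'$ of $x$, and uses the Galerkin orthogonality $\int_{\perf}\nabla(Q_{x,\perf}-Q_{x,m})\nabla\tilde{v}=0$ for any $\tilde{v}\in\wt{V}^{f}(\perf\setminus\wt{\omega}_{x})$. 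This yields $\TwoNorm{\nabla v}{\perf}\leq k^{d/2}C_{3}\bigl(\sum_{x}\TwoNorm{\nabla(Q_{x,\perf}-Q_{x,m})}{\perf}^{2}\bigr)^{1/2}$, with the $k^{d/2}$ coming from the number of $x$ whose patch sits inside $\wt{\omega}_{x',k+2}$. Only \emph{after} that does the paper use C\'{e}a together with the cutoff competitor on each node, and it is this step that produces the factor $(1+C_{1}^{2})^{1/2}$, not a summability estimate on the $Q_{x,\perf}$.

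Your step~4 tries to replace this duality argument by a finite-overlap bound $\TwoNorm{\nabla\sum_{x}e_{x}}{\perf}^{2}\leq Cm^{d}\sum_{x}\TwoNorm{\nabla e_{x}}{\perf}^{2}$ with $e_{x}=Q_{x,\perf}(u_{H})-Q_{x,m}(u_{H})$. This would be valid if $\operatorname{supp}e_{x}\subset\wt{\omega}_{x,m}$, but it is not: $Q_{x,m}$ is supported in $\wt{\omega}_{x,m}$, while $Q_{x,\perf}$ has \emph{global} support. Outside $\wt{\omega}_{x,m}$ you have $e_{x}=Q_{x,\perf}$, which is only exponentially small in $L^{2}$, not zero. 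A plain overlap/Cauchy--Schwarz argument then picks up all $H^{-d}$ nodes at every point and destroys the estimate. The acknowledgement ``up to exponentially small tails'' does not repair this without a further mechanism, and the clean mechanism is precisely the orthogonality-based duality argument the paper uses. Correspondingly, your attribution of the factor $(1+C_{1}^{2})$ to a summability estimate $\sum_{x}\TwoNorm{\nabla Q_{x,\perf}}{\perf}^{2}\leq(1+C_{1}^{2})\TwoNorm{\nabla Q_{\perf}}{\perf}^{2}$ is not where the paper gets it from; in the paper that factor is the cost of the cutoff-and-project competitor in the nodewise C\'{e}a step.
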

\begin{proof}
Recall that $Q_{m}(u_{H})=\sum_{x\in {\cal N}_{H}  }Q_{x,m}(u_{H})$ with 
\begin{align}
 \int_{\wt{\omega}_{x,m}}\nabla Q_{x,m}(u_{H}) \nabla wdz=\int_{\wt{\omega}_{x}} \hat{\lambda}_{x}\nabla u_{H} \nabla w,dz \text{ for all } w \in \wt{V}^f(\wt{\omega}_{x,m}).
\end{align}
For all $x\in {\cal N}_{H}$, and letting $F_{x}(w):=\int_{\perf}\hat{\lambda}_{x}\nabla u_{H} \nabla wdz$. Note that for $w\in \wt{V}^f(\perf \backslash \wt{\omega}_{x})$, we have $F_{x}(w)=0$. Let $x\in {\cal N}_{H}$ and choose a $x'\in {\cal N}_{H}$ such that $\wt{\omega}_{x'}\cap\wt{\omega}_{x}\neq \emptyset$. We have  $\wt{\omega}_{x}\subset \wt{\omega}_{x',1}$ and so $  \wt{V}^f(\perf \backslash \wt{\omega}_{x',1})\subset  \wt{V}^f(\perf \backslash \wt{\omega}_{x}).$ Thus, $F_{x}$ satisfies the conditions of Lemma \ref{decaylemma}. 

Choosing  $k\geq m$,
we have that $\wt{\omega}_{x',k}\subset \wt{\omega}_{x,m}$.
We denote $v= Q_{\perf}(u_{H})-Q_{m}(u_{H}))\in \wt{V}^f,$ subsequently $\Qintp(v)=0$. Taking the cut-off function $\eta_{x'}^{k,1}$ we have 
\begin{align}
\label{term1}
\TwoNorm{\nabla v}{\perf}^2&=\sum_{x\in {\cal N}_{H}}\int_{\perf}\nabla( Q_{x,\perf}(u_{H})-Q_{x,m}(u_{H}))\nabla (v(1-\eta_{x'}^{k,1}))dz \\
\label{term2}
&+\sum_{x\in {\cal N}_{H}}\int_{\perf}\nabla( Q_{x,\perf}(u_{H})-Q_{x,m}(u_{H}))\nabla (v\eta_{x'}^{k,1})dz.
\end{align}
Estimating the right hand side of \eqref{term1} for each $x$ we have 
\begin{align*}
&\int_{\perf}\nabla( Q_{x,\perf}(u_{H})-Q_{x,m}(u_{H}))\nabla (v(1-\eta_{x'}^{k,1})) dz\\
&\leq \TwoNorm{\nabla( Q_{x,\perf}(u_{H})-Q_{x,m}(u_{H}))}{\perf} \TwoNorm{\nabla (v(1-\eta_{x'}^{k,1}))}{\wt{\omega}_{x',k}}\\
&\leq \TwoNorm{\nabla( Q_{x,\perf}(u_{H})-Q_{x,m}(u_{H}))}{\perf} \left(\TwoNorm{\nabla v}{\wt{\omega}_{x',k}} +  \TwoNorm{v\nabla (1-\eta_{x'}^{k,1}))}{\wt{\omega}_{x',k}\backslash \wt{\omega}_{x',k-1}}   \right)\\
&\leq \TwoNorm{\nabla( Q_{x,\perf}(u_{H})-Q_{x,m}(u_{H}))}{\perf} \left(\TwoNorm{\nabla v}{\wt{\omega}_{x',k}} +  CH^{-1}\TwoNorm{v-\Qintp(v)}{\wt{\omega}_{x',k}\backslash \wt{\omega}_{x',k-1}}   \right)\\
&\leq \TwoNorm{\nabla( Q_{x,\perf}(u_{H})-Q_{x,m}(u_{H}))}{\perf} \left(1 +  C\local   \right)\TwoNorm{\nabla v}{\wt{\omega}_{x',k+1}}.
\end{align*}
As in the proof of Lemma \ref{decaylemma}, $\tilde{v}=\eta_{x'}^{k,1} v -\Qintp(\eta_{x'}^{k,1} v )\in \wt{V}^f(\perf\backslash \wt{\omega}_{x',k-3}).$ 
Letting $m$ be large enough so that $k\geq 4$, then $\tilde{v}\in  \wt{V}^f(\perf\backslash \wt{\omega}_{x})$ and so we have 
\begin{align}
\int_{\perf}\nabla( Q_{x,\perf}(u_{H})-Q_{x,m}(u_{H}))\nabla\tilde{v}dz=0.
\end{align}
We have now the estimate for \eqref{term2} for $x\in {\cal N}_{H}$ using the above identity and \eqref{qiestimate}
\begin{align*}
&\int_{\perf}\nabla( Q_{x,\perf}(u_{H})-Q_{x,m}(u_{H}))\nabla (v\eta_{x'}^{k,1}-\tilde{v})dz\\
&\leq \TwoNorm{ \nabla( Q_{x,\perf}(u_{H})-Q_{x,m}(u_{H}))  }{ \perf  }\TwoNorm{ \nabla (v\eta_{x'}^{k,1}-\tilde{v})  }{ \perf  }\\
&\leq \TwoNorm{ \nabla( Q_{x,\perf}(u_{H})-Q_{x,m}(u_{H}))  }{ \perf  }C_{1}\TwoNorm{ \nabla v }{ \wt{\omega}_{x',k+2 } }
\end{align*}
Combing the estimates for \eqref{term1} and \eqref{term2} we obtain 
\begin{align}
\TwoNorm{\nabla v}{\perf}^2&\leq \sum_{x\in {\cal N}_{H}} \TwoNorm{\nabla( Q_{x,\perf}(u_{H})-Q_{x,m}(u_{H}))}{\perf} \left(1 +  C\local   \right)\TwoNorm{\nabla v}{\wt{\omega}_{x',k+1}}\nonumber\\
&+\sum_{x\in {\cal N}_{H}}\TwoNorm{ \nabla( Q_{x,\perf}(u_{H})-Q_{x,m}(u_{H}))  }{ \perf  }C_{1}\TwoNorm{ \nabla v }{ \wt{\omega}_{x',k+2 } } \nonumber \\
&\leq \sum_{x\in {\cal N}_{H}}\TwoNorm{ \nabla( Q_{x,\perf}(u_{H})-Q_{x,m}(u_{H}))  }{ \perf  }(1+C_{1}+C\local)\TwoNorm{ \nabla v }{ \wt{\omega}_{x',k+2 } }\nonumber\\
\label{vestimate}
&\leq k^{\frac{d}{2}}C_{3} \left(\sum_{x\in {\cal N}_{H}}\TwoNorm{ \nabla( Q_{x,\perf}(u_{H})-Q_{x,m}(u_{H}))  }{ \perf  }^2\right)^{\frac{1}{2}}\TwoNorm{ \nabla v }{\perf },
\end{align}
supposing the $\# \{x\in {\cal N}_{H}|\wt{\omega}_{x}\subset \wt{\omega}_{x',k+2}\}\leq k^{\frac{d}{2}}$, as is guaranteed by quasi-uniformity of the coarse grid.  Here we have $C_{3}=(1+C_{1}+C\local)$.
To estimate $\TwoNorm{ \nabla( Q_{x,\perf}(u_{H})-Q_{x,m}(u_{H}))  }{ \perf  }$ we use the Galerkin orthogonality of the local problem, that is 
\begin{align}\label{galerkinlocal}
\TwoNorm{ \nabla( Q_{x,\perf}(u_{H})-Q_{x,m}(u_{H}))  }{ \perf  }\leq \inf_{q\in \wt{V}^f(\wt{\omega}_{x',k}) } \TwoNorm{ \nabla( Q_{x,\perf}(u_{H})-q)  }{ \perf  }.
\end{align}
Taking $q_{x}=(1-\eta^{k,1}_{x'})Q_{x,\perf}(u_{H})-\Qintp((1-\eta^{k,1}_{x'})Q_{x,\perf}(u_{H})  )\in \wt{V}^f(\wt{\omega}_{x',k})$, we have 
\begin{align*}
\TwoNorm{ \nabla( Q_{x,\perf}(u_{H})-Q_{x,m}(u_{H}))  }{ \perf  }^2&\leq \TwoNorm{ \nabla( \eta^{k,1}_{x'}Q_{x,\perf}(u_{H})-\Qintp((1-\eta^{k,1}_{x'})Q_{x,\perf}(u_{H})  )) }{ \perf  }^2\\
&\leq \TwoNorm{ \nabla Q_{x,\perf}(u_{H}) }{\perf\backslash \wt{\omega}_{x',k-2} }^2+ \TwoNorm{ \nabla(\Qintp((1-\eta^{k,1}_{x'})Q_{x,\perf}(u_{H})  )) }{ \perf  }^2.
\end{align*}
Using Lemma \ref{qi} and Lemma \ref{decaylemma} on the second term we arrive at 
\begin{align*}
\TwoNorm{ \nabla( Q_{x,\perf}(u_{H})-Q_{x,m}(u_{H}))  }{ \perf  }^2 &\leq \TwoNorm{ \nabla Q_{x,\perf}(u_{H}) }{ \perf \backslash \wt{\omega}_{x',k-2} }^2+ C_{1}^2 \TwoNorm{ \nabla Q_{x,\perf}(u_{H})  ) }{ \wt{\omega}_{x',k+2} \backslash \wt{\omega}_{x',k-3}   }^2\\
&\leq (1+ C_{1}^2 )\TwoNorm{ \nabla Q_{x,\perf}(u_{H}) }{ \perf \backslash \wt{\omega}_{x',k-3} }^2\\
&\leq (1+ C_{1}^2 )\theta^{2(k-3)} \TwoNorm{ \nabla Q_{x,\perf}(u_{H}) }{ \perf  }^2\\
&\leq (1+ C_{1}^2 )\theta^{2m } \TwoNorm{ \nabla Q_{x,\perf}(u_{H}) }{ \perf  }^2.
\end{align*}
Combining this estimate into \eqref{vestimate} we arrive at the final estimate that 
\begin{align*}
\TwoNorm{\nabla v}{\perf}&\leq k^{\frac{d}{2}}C_{3} \left(\sum_{x\in {\cal N}_{H}}\TwoNorm{ \nabla( Q_{x,\perf}(u_{H})-Q_{x,m}(u_{H}))  }{ \perf  }^2\right)^{\frac{1}{2}}\\
&\leq k^{\frac{d}{2}}C_{3} \left(\sum_{x\in {\cal N}_{H}}(1+ C_{1}^2 )\theta^{2m  } \TwoNorm{ \nabla Q_{x,\perf}(u_{H}) }{ \perf  }^2\right)^{\frac{1}{2}}\\
&\leq   m^{\frac{d}{2}}C_{4} \theta^{m   }\TwoNorm{ \nabla Q_{\perf}(u_{H}) }{ \perf  }.
\end{align*}
Here we used $Q_{\perf}=\sum_{x\in {\cal N}_{H}}Q_{x,\perf}$ and denoted $C_{4}=C_{3}    (1+ C_{1}^2 )^{\frac{1}{2}}$. \qed

\end{proof}

\section{References}
\bibliographystyle{siam}	
\bibliography{HMP_references.bib}	
\end{document}